\documentclass[a4paper,12pt,reqno]{amsart}
\usepackage{latexsym,amscd,amssymb,amsmath,amsthm,comment,url}
\usepackage[dvips]{graphicx}
\usepackage[all]{xy}
\xyoption{poly}
\xyoption{knot}
\xyoption{import}
%
%
\headheight=8pt     \topmargin=10pt
\textheight=630pt   \textwidth=422pt
\oddsidemargin=18pt \evensidemargin=18pt
\makeatletter
%
%
\theoremstyle{plain}
  \newtheorem{thm}{Theorem}[section]
  \newtheorem{prop}[thm]{Proposition}
  \newtheorem{lem}[thm]{Lemma}
  \newtheorem{cor}[thm]{Corollary}
  
\theoremstyle{definition}
  \newtheorem{dfn}[thm]{Definition}
  \newtheorem{exmp}[thm]{Example}
\theoremstyle{remark}
  \newtheorem{rem}[thm]{Remark}
%
%
%
%
\let\opn\operatorname 
\let\term\emph
%
%
\def\@bothmode#1{\ifmmode #1\else $#1$\fi}
%
%
\newcount\@tempchn 
\def\@chCount#1{%
   \@tempchn=0
   \@tfor\member:=#1\do{\advance\@tempchn by 1}%
}
%
%
\def\@autopr#1{%
   \@chCount{#1}%
   \ifnum\@tempchn<2 #1\else (#1)\fi
}
%
\let\@tempopn\relax 
\def\@opform_#1#2{\@tempopn_{#1}\@autopr{#2}}
%
%
\numberwithin{equation}{section}
%
%
%
%
\def\NN{\mathbb{N}} 
\def\ZZ{\mathbb{Z}} 
\def\RR{\mathbb{R}} 
\def\kk{\Bbbk} 
%
%
\def\m{\ideal{m}} 
\def\p{\ideal{p}} 
%
%
\let\e\varepsilon 
\let\s\sigma 
\let\C\Sigma 
\let\t\tau 
\let\u\upsilon 
\let\i\iota 
\def\MM{\mathcal M} 
\def\M{\mathbf M} 
%
%
%
%
\let\@tempar\relax 
\def\@seton^#1{\overset{#1}{\@tempar}}
\def\defar#1#2{\@xp\def\csname #1\endcsname{\def\@tempar{#2}\@ifnextchar^{\@seton}{\@tempar}}}
\defar{longto}{\longrightarrow} 
\defar{epito}{\twoheadrightarrow} 
\defar{monoto}{\rightarrowtail} 
\defar{embto}{\hookrightarrow} 
\def\imply{\@bothmode{\Rightarrow}} 
\def\Imply{\@bothmode{\Longrightarrow}} 
\def\iff{\@bothmode\Longleftrightarrow} 
\def\get{\@bothmode{\Leftarrow}} 
\def\Get{\@bothmode{\Longleftarrow}} 
%
%
%
\def\bra#1{[#1]} 
\def\mbra#1{\{ #1\}} 
\def\set#1#2{\mbra{\,#1\mid #2\,}} 
%
%
\let\Dsum\bigoplus 
\let\tns\otimes 
%
%
\def\sM{|\MM |} 
\def\zM{\ZZ\MM} 
\def\szM{|\ZZ\MM |} 
\def\supp{\opn{supp}} 
\def\geom#1{||#1||} 
\let\none\varnothing 
\def\bc{a} 
\def\cell{\mathcal X} 
\def\op{\mathsf{op}} 
\def\chara{\operatorname{char}} 
\def\sph{\mathbb S} 
%
%
\def\defopn#1{%
    \@xp\def\csname #1\endcsname{%
        \def\@tempopn{\opn{\csname the#1\endcsname}}%
        \@ifnextchar_{\@opform}{\@opform_{}}%
    }%
}

\defopn{Ker} 

\defopn{Im} 

\defopn{Cok} 

\defopn{Ann} 

\defopn{Ass} 

\defopn{Spec} 

\defopn{pd} 

\defopn{id} 
\def\idmap{\opn{id}} 
\def\nat{\opn{nat}} 
\def\the@init{in}
\defopn{@init}
\def\init{\@init_{\succ}}
\def\relint{\opn{rel-int}} 
%
%
\def\fring#1{\kk \bra{#1}} 
%
%
\let\ideal\mathfrak 
\let\rad\sqrt 
%
%
%
\def\theE{E}
\def\E{\@ifstar{{}^*\theE}{\theE}} 
\let\W\omega 
%
%
%
\let\defcat\defopn
\def\theMod{Mod}    \def\themod{mod}
      
\let\the@Lgr\theMod  \let\the@lgr\themod
\defcat{Mod} 
\defcat{mod} 
\defcat{@Lgr}
\def\Lgr#1{\@Lgr_{\ZZ\MM}{#1}} 
\defcat{@lgr}
\def\lgr#1{\@lgr_{\ZZ\MM}{#1}} 
\defcat{Sq} 
\defcat{InjSq} 
%
%
%
%
\let\colimit\varinjlim 
\def\theHom{Hom}    \def\theRHom{RHom}
\def\theExt{Ext}    
\def\theD{D}
\let\@tempgrop\underline
\def\Hom{\@ifstar{\opn{\@tempgrop\theHom}}{\opn\theHom}} 
\def\RHom{\@ifstar{\opn{R\@tempgrop\theHom}}{\opn\theRHom}} 
\def\Ext{\@ifstar{\opn{\@tempgrop\theExt}}{\opn\theExt}} 
\def\uExt{\@ifstar{\opn{\@tempgrop\theuExt}}{\opn\theExt}} 
\def\uExt{\underline{\operatorname{Ext}}}

\def\theDcat{D}
\def\Db{\theDcat^b} 
\def\@G_#1{\Gamma_{#1}}
\def\G{\@ifnextchar_{\@G}{\@G_\m}} 
\def\RGm{\mathrm{R}\G} 
\def\DD{\mathbb D} 
\def\DDD{\mathbf D}
\def\theHtcat{K}
\def\Cb{\theHtcat^b} 

\def\for{{\mathbb U}} 
%
%
%
\def\cpx#1{#1^{\bullet}} 
\def\theD{D} 
\def\D{\@ifstar{{}^*\theD}{\theD}} 
%
%
%
\def\Sh{\operatorname{Sh}} 
\def\cF{{\mathcal F}} 
\def\sp{\operatorname{Sp\acute{e}}} 
\def\RcHom{{\rm R}\operatorname{\mathcal Hom}} 
\def\cDx{\cpx {\mathcal D}_X} 
\def\const{\underline{\kk} } 
\def\mdL{\mod \Lambda}
%

%
%
\def\cH{\mathcal H} 
\def\rH{\tilde{H}} 
\makeatother
%
\title{Dualizing complex of a toric face ring}
\author{Ryota Okazaki}
\address{Department of Pure and Applied Mathematics, 
Graduate School of Information Science and Technology,
Osaka University, Toyonaka, Osaka 
560-0043, Japan}
\email{u574021d@ecs.cmc.osaka-u.ac.jp}
\author{Kohji Yanagawa}
\thanks{The second author is partially supported by Grant-in-Aid for Scientific Research (c) (no.19540028).}
\address{Department of Mathematics,
Faculty of Engeneering Science, Kansai University,
Suita 564-8680, Japan}
\email{yanagawa@ipcku.kansai-u.ac.jp}
%
%
%
%
\begin{document}
%
%
\maketitle
%
%
%
\begin{abstract} 
A {\it toric face ring}, which  generalizes both  Stanley-Reisner rings and 
affine semigroup rings, is studied by Bruns, R\"omer and their coauthors recently. 
In this paper, under the ``normality" assumption, we describe a dualizing complex of 
a toric face ring $R$ in a very concise way. 
Since $R$ is not a graded ring in general, 
the proof is not straightforward.   
We also develop the squarefree module theory over $R$, and   
show that the Buchsbaum property and the Gorenstein* property of $R$ 
are topological properties of its associated cell complex. 
\end{abstract}

\section{Introduction}
Stanley-Reisner rings and (normal) affine semigroup rings are important subjects of 
combinatorial commutative algebra. 
The notion of {\it toric face rings}, which originated in Stanley \cite{St}, 
generalizes both of them, and has been 
studied by Bruns, R\"omer, and their coauthors recently 
(e.g. \cite{BG02, BKR, IR}). 
Contrary to Stanley-Reisner rings and affine semigroup rings,  
a toric face ring does not admit a nice multi-grading in general. 
So, even if the results can be easily imagined from these classical examples, 
the proofs sometimes require technical argument. 

\smallskip  

Now we start the definition of a toric face ring.   
Let $\cell$ be a finite cell complex with $\emptyset \in \cell$. 
Assume that the closure $\overline{\s}$ of each $i$-cell $\s \in \cell$ is 
homeomorphic to an $i$-dimensional ball, and 
for given two cells $\s, \t \in \cell$ there exists $\u \in \cell$ with  
$\overline{\s} \cap \overline{\t} = \overline{\u}$ (we allow the case $\u = \emptyset$). 
A simplicial complex and the cell complex associated with a polytope are 
examples of our $\cell$.

We assign a pointed polyhedral cone 
$C_\s \subset \RR^{d_\s}$ to each $\s \in \cell$ so that 
the following condition is satisfied.  (We say a cone is pointed if it contains no line.)
\begin{itemize}
\item[$(*)$]  $\dim C_\s = \dim \s +1$, and 
there is a one-to-one correspondence between 
$\{ \, \text{faces of $C_\s$} \, \}$ 
and $\{ \, \t \in \cell \mid \t \subset \overline{\s} \, \}$. 
The face of $C_\s$ corresponding to $\t$ is isomorphic to $C_\t$ by a map 
$\i_{\s, \t}: C_\t \to C_\s$. These maps satisfy 
$\i_{\s, \s} = \idmap_{C_{\s}}$ and $\i_{\s, \t} \circ \i_{\t,\u}$ = $\i_{\s, \u}$ 
for all $\s,\t,\u \in \cell$ with $\overline{\s} \supset \overline{\t} \supset \u$.
\end{itemize}

For example, a pointed fan (i.e., a fan consisting of pointed cones) gives such a structure.  
Here $\i_{\s, \t}$'s are inclusion maps, and  
 $\cell$ is a ``cross-section'' of the fan. 

Next we define a {\it monoidal complex} $\MM$ supported by $\{C_\s\}_{\s \in \cell}$ as follows. 
\begin{itemize}
\item[($**$)]To each $\s \in \cell$, we assign a finitely generated additive submonoid 
$\M_\s \subset (\ZZ^{d_\s} \cap C_\s) \subset \RR^{d_\s}$ with $\RR_{\geq 0} \M_\s = C_\s$. 
For $\s,\t \in \cell$ with $\overline{\s} \supset \t$, 
the map $\i_{\s,\t}:C_\t \to C_\s$ induces an isomorphism
$\M_\t \cong \M_\s \cap \i_{\s,\t}(C_\t)$ of monoids.
\end{itemize}

If $\Sigma$ is a rational pointed fan in $\RR^n$, then 
$\{ \, \ZZ^n \cap C \}_{C \in \Sigma}$ gives a monoidal complex. 

For a monoidal complex $\MM$ on a cell complex $\cell$, we set
$\sM := \colimit_{\s \in \cell}
\M_\s,$
where the direct limit is taken with respect to $\i_{\s, \t} : \M_\t \to \M_\s$ for 
$\s , \t \in \cell$ with $\overline{\s} \supset \t$. 
If $\MM$ comes from a fan in $\RR^n$, then $\sM$ can be identified with 
$\bigcup_{\s \in \cell} \M_\s \subset \ZZ^n$.   The $\kk$-vector space
$$
\fring \MM := \Dsum_{a \in \sM} \kk \, t^a,
$$
with the multiplication
$$
t^a \cdot t^b = \begin{cases}
t^{a+b} & \text{if $ a,b \in \M_\s $ for some $ \s \in \cell $;}\\
0       & \text{otherwise,}
\end{cases}
$$
has a $\kk$-algebra structure. 
We call $\fring \MM$ the toric face ring of $\MM$.
If $\MM$ comes from a fan in $\RR^n$, then $\kk[\MM]$ has a natural $\ZZ^n$-grading.  
However, this is not true in general (cf. Example~\ref{sec:Moebius_ring} below). 


\begin{exmp}\label{sec:exmp_intro} 
(1) Let $\Delta$ be a simplicial complex.  
Attaching the monoid $\NN^{i+1}$ to each $i$-dimensional face of $\Delta$, we get a monoidal complex 
$\MM$ on $\Delta$. In this case, $\kk[\MM]$ coincides with 
the Stanley-Reisner ring $\kk[\Delta]$.    
An affine semigroup ring is also a toric face ring corresponding 
to the case when $\cell$ has a unique maximal cell.

(2)  Let $\cell$  be a two-dimensional cell complex given by the boundary of a cube. 
Assigning normal semigroup rings of the form 
$\kk[x, y, z, w]/(xz - y w)$ to all two-dimensional cells, 
we get a toric face ring $\kk[\MM]$. 
This $\MM$ comes from a fan, and $\kk[\MM]$ has a $\ZZ^3$-grading with 
$\M_\s = \ZZ^3 \cap C_\s$ for all $\s \in \cell$. (Find such a grading explicitly.)  
Next, we assign  $\kk[x, y, z, w]/(xz - y w)$ to 5 two-dimensional cells
and $\kk[x,y,z,w,v]/ (xz-v^2, yw-v^2)$ to the $6^{\rm th}$ one. 
Then we get a toric face ring 
$\kk[\MM']$, which is observed in \cite[pp.6-7]{BG02}. 
While $\kk[\MM']$ admits a $\ZZ^3$-grading and all $\fring{\M'_\s}$ is normal, 
it is impossible to satisfy $\M'_\s = \ZZ^3 \cap C_\s$  simultaneously for all $\s$.   
A toric face ring without multi-grading is given in Example~\ref{sec:Moebius_ring}.
\begingroup
\def\A{\fring{x,y,z,w}/(xz-yw)}
\def\B{\fring{x,y,z,w,v}/(xz-v^2,yw-v^2)}    
\begin{figure}[h]
\begin{minipage}{0.3\hsize}
$$
\xy /r3.0pc/:,
{\xypolygon4"A"{\bullet}},
+(.5,.6),
{\xypolygon4"B"{~>{}\bullet}},
{"A2"\PATH~={**@{-}}'"B2"'"B1"'"B4"'"A4"},
{"A3"\PATH~={**@{--}}'"B3"'"B4"},
"A1";"B1"**@{-},
"B2";"B3"**@{--},
{"B1";"A4"**@{.}},?!{"A1";"B4"**@{.}},*{\circ}
\endxy
$$
\end{minipage}
\begin{minipage}{0.3\hsize}
$$
\xy
\xyimport(3,3)(0.5,0.5){}="a",
"a"+L; "a"+R, **{},
?(0); ?(1)*@{>}**@{-},
"a"+D; "a"+U, **{},
?(0); ?(1)*@{>}**@{-},
(0,1.5)="A"; (1.5,1.5)="C"**@{-},
(1.5,0)="B"; "C"**@{-},
0*{\bullet}, "A"*{\bullet}, "B"*{\bullet}, "C"*{\bullet},
(1.2,-1)*{\A}
\endxy
$$
\end{minipage}
\begin{minipage}{0.3\hsize}
$$
\xy
\xyimport(3,3)(1.5,1.5){}="a",
"a"+L; "a"+R, **{},
?(0); ?(1)*@{>}**@{-},
"a"+D; "a"+U, **{},
?(0); ?(1)*@{>}**@{-},
(0,1)="A"; (-1,0)="B"**@{-},
"B"; (0,-1)="C"**@{-},
"C"; (1,0)="D"**@{-},
"D"; "A"**@{-},
"A"*{\bullet}, "B"*{\bullet}, "C"*{\bullet}, "D"*{\bullet},  0*\cir<2.6pt>{},
(0.7,-2)*{\B}
\endxy
$$
\end{minipage}
\end{figure}
\endgroup
\end{exmp}

The affine semigroup ring  $\kk[\M_\s] := \bigoplus_{a \in \M_\s} \kk \, t^a$ 
can be regarded as a quotient ring of a toric face ring $R:=\kk[\MM]$.  
In the rest of this section, 
we assume that $\kk[\M_\s]$ is normal for all $\s \in \cell$, 
and set $d:=\dim R = \dim \cell +1$.  

\begin{thm}\label{Ishida}
In the above situation, the cochain complex $\cpx I_R$ 
given by 
$$I^{-i}_R := \Dsum_{\substack{\s \in \cell, \\ \dim \s = i-1}} \kk[\M_\s],  \qquad 
\cpx I_R: 0 \longto I^{-d}_R \longto I^{-d + 1}_R \longto \cdots \longto I^0_R \longto 0, 
$$
and $$\partial: I_R^{-i} \supset \fring {\M_\s} \ni 1_\s \longmapsto 
\sum_{\substack{\dim \fring{\t} = i-1, \\ \t \subset \overline{\s}}} \pm 1_\t \in 
\Dsum_{\substack{\dim \fring{\t} = i-1, \\ \t \subset \overline{\s}}} \fring 
{\M_\t}  \subset I_R^{-i+1}$$ 
is quasi-isomorphic to a normalized dualizing complex $\cpx D_R$ of $R$. 
Here the sign $\pm$ is given by an incidence function of the regular cell complex $\cell$.   
\end{thm}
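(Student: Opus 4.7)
The plan is to verify $\cpx I_R \simeq \cpx D_R$ by checking the characterizing property that $\RGm(\cpx I_R)$ is quasi-isomorphic to $E_R(\kk)$, the injective hull of the residue field, concentrated in degree zero. First I would confirm that $\cpx I_R$ is a well-defined complex of $R$-modules: each $R_\s := \fring{\M_\s}$ is an $R$-module through the surjection $R \twoheadrightarrow R_\s$ killing $t^a$ for $a \notin \M_\s$, and for $\t \subset \overline\s$ of codimension one the inclusion of $\M_\t$ as a face of $\M_\s$ yields a canonical $R$-linear surjection $R_\s \twoheadrightarrow R_\t$. Combining these with the signs from an incidence function of the regular cell complex $\cell$, the relation $\partial^2 = 0$ reduces to the classical identity $\sum_\u [\s : \u][\u : \t] = 0$ for cells $\t$ of codimension two in $\s$.

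The next step is to apply $\RGm$ term-wise to $\cpx I_R$ and analyze the resulting spectral sequence. Since $\fring{\M_\s}$ is normal, it is Cohen--Macaulay of dimension $\dim \s + 1$ by Hochster's theorem, so $H^q_\m(R_\s)$ vanishes except when $q = \dim \s + 1$. Therefore the $E_1$-page
\[
E_1^{p, q} = \bigoplus_{\dim \s = -p - 1} H^q_\m(R_\s)
\]
is supported on the anti-diagonal $p + q = 0$; all higher differentials $d_r$ ($r \geq 1$) must vanish for degree reasons, so the spectral sequence degenerates at $E_1$. Consequently $\RGm(\cpx I_R)$ has cohomology concentrated in degree zero, with associated graded $\bigoplus_\s H^{\dim \s + 1}_\m(R_\s)$ under the resulting filtration.

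The main obstacle will be Step 3: identifying this degree-zero $R$-module with $E_R(\kk)$. Each $H^{\dim \s + 1}_\m(R_\s)$ is $\m$-torsion and abstractly a direct sum of copies of $E_R(\kk)$; the real content lies in showing that the nontrivial extensions furnished by the spectral-sequence filtration glue these pieces into a single $E_R(\kk)$. Because $R$ need not admit any grading, the multi-graded decompositions that settle this in the Stanley--Reisner or normal affine-semigroup case are unavailable, and a different bookkeeping device is required. To handle this I would translate the problem into the framework of squarefree $R$-modules developed elsewhere in the paper, and compare $\cpx I_R$ to a \v{C}ech-type complex assembled from the family $\{R_\s\}_{\s \in \cell}$; the colimit description $\sM = \colimit_\s \M_\s$ of the underlying monoidal complex is what enables this comparison. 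Once $\RGm(\cpx I_R) \simeq E_R(\kk)[0]$ is secured, finite generation of the cohomology of $\cpx I_R$ (immediate from each $R_\s$ being finitely generated over $R$) together with the standard Matlis-duality characterization of the normalized dualizing complex yields $\cpx I_R \simeq \cpx D_R$.
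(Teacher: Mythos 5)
Your Steps 1--2 are fine: $\cpx I_R$ is a complex, and since each $\kk[\M_\s]$ is normal, hence Cohen--Macaulay, the first hypercohomology spectral sequence for $\RGm(\cpx I_R)$ is concentrated on the antidiagonal and degenerates, so $\RGm(\cpx I_R)$ has cohomology only in degree $0$, filtered by the modules $H^{\dim\s+1}_\m(\kk[\M_\s])$. But two things go wrong after that. First, a misstatement: $H^{\dim\s+1}_\m(\kk[\M_\s])$ is the Matlis dual of the canonical module $\W_{\kk[\M_\s]}$, which is injective only when $\kk[\M_\s]$ is Gorenstein, so it is \emph{not} ``abstractly a direct sum of copies of $E_R(\kk)$''; the gluing problem you set up in Step 3 is therefore not of the shape you describe. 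More importantly, Step 3 contains no argument at the decisive point: ``translate into squarefree modules and compare with a \v Cech-type complex'' is exactly where the whole difficulty sits, because $R$ carries no global grading, and endowing localizations, local cohomology and Ext modules with a $\zM$-grading has to be done by hand (this is the content of Lemma~\ref{sec:localization}, Proposition~\ref{sec:Sq_cat}, Proposition~\ref{sec:DD} and Proposition~\ref{Local Duality} in the paper). Asserting that $H^0(\RGm(\cpx I_R))\cong E_R(\kk)$ will follow from such a comparison is essentially restating the theorem.

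Second, even granting $\RGm(\cpx I_R)\simeq E_R(\kk)[0]$ and finitely generated cohomology, your final reduction is invalid: the characterization of the normalized dualizing complex via $\RGm$ and Matlis duality is a statement about local (or graded-local) rings, and $R$ is neither local nor, in general, graded --- which is precisely the obstruction this theorem is about. The functor $\RGm$ only sees the $\m$-torsion part; for any maximal ideal $\m'\neq\m$ the complex $\cpx D_R\oplus (R/\m')[j]$ has finitely generated cohomology and the same image under $\RGm$, yet is not a dualizing complex. To make ``checking at $\m$'' suffice you would need to know in advance that a suitable comparison object (for instance $\RHom_R(\cpx I_R,\cpx D_R)$, or the cone of an actual map $\cpx I_R\to\cpx D_R$) has $\zM$-graded cohomology, since a $\zM$-graded module vanishing at $\m$ vanishes; but your outline never produces any map between $\cpx I_R$ and $\cpx D_R$. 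The paper's proof goes the other way around: it constructs a canonical embedding $\cpx I_R\subset\cpx D_R$ using $\kk[\M_\s]\cong\Ext^{-d_\s}_R(\W_{\kk[\M_\s]},\cpx D_R)$ and the compatibility of these embeddings along faces (Proposition~\ref{sec:subcpx}), shows the induced natural transformation $\DD\to\RHom_R(-,\cpx D_R)$ is an isomorphism by checking on the injectives $\kk[\s]$ of $\Sq R$ (reducing to a single normal semigroup ring) plus a way-out argument (Proposition~\ref{sec:DD}), and then simply evaluates at $R$. Without that embedding, or some substitute giving graded control away from $\m$, your argument cannot close.
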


Clearly, our $\cpx I_R$ is analogous to the complex constructed in Ishida \cite{I},  
but, since we assume that all $\kk[\M_\s]$ are normal,  
we do not have to take the (graded) injective hull of $\kk[\M_\s]$.  
If $\MM$ comes from a fan in $\RR^n$, the above theorem has been obtained in
\cite[Theorem~5.1]{IR} using the $\ZZ^n$-grading of $R$.

We also introduce the notion of $\ZZ\MM$-graded $R$-modules.   Since $R$ is not a graded ring, 
these are not graded modules in the usual sense, but we can consider their ``Hilbert functions". 
In particular, Corollary~\ref{reduced cohomology}, which recaptures a result of \cite{BBR}, gives 
a formula on the Hilbert function of the local cohomology module $H_\m^i(R)$ at the maximal ideal 
$\m := (\, t^a \mid 0 \ne a \in \sM \, )$. 

In \cite{Y01,Y03}, the second author defined {\it squarefree modules} $M$ over a normal semigroup ring 
$\kk[\M_\s]$, and gave corresponding constructible sheaves $M^+$ on the closed ball $\overline{\sigma}$.  
We can extend this to a toric face ring $R$, that is, we define squarefree $R$-modules and 
associate constructible sheaves on $\cell$ with them. 
In this context, the duality $\RHom_R(-, \cpx I_R)$ on the derived category of 
squarefree $R$-modules corresponds to Poincar\'e-Verdier duality on 
the derived category of constructible sheaves on $\cell$. 
For example, the complex $\cpx I_R$  consists of squarefree modules, and 
$(\cpx I_R)^+$ is the Verdier's dualizing complex of the underlying topological space of $\cell$. 

\begin{cor}\label{sec:Intr2}
The Buchsbaum property, Cohen-Macaulay property and Gorenstein* property 
are topological properties of the underlying space of $\cell$. 
\end{cor}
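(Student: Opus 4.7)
The plan is to exploit the dictionary between squarefree $R$-modules and constructible sheaves on $\cell$ introduced just before the corollary, together with Theorem~\ref{Ishida}. Under this dictionary, $R$ (viewed as a squarefree $R$-module) corresponds to the constant sheaf $\const$ on $|\cell|$, and $\cpx I_R$ to Verdier's dualizing complex $(\cpx I_R)^+$. By local duality together with Theorem~\ref{Ishida}, the Matlis dual of $H^i_\m(R)$ is $H^{-i}(\cpx I_R)$, whose image under $(-)^+$ is the $(-i)$-th cohomology sheaf of Verdier's dualizing complex of $|\cell|$; moreover, by the graded Hilbert-function formula of Corollary~\ref{reduced cohomology}, each individual $\ZZ\MM$-homogeneous component of $H^i_\m(R)$ is the reduced cohomology of a subspace of $|\cell|$. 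Consequently $H^i_\m(R)$, as a squarefree module, depends only on the topology of $|\cell|$.

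The Cohen-Macaulay case is then immediate: $R$ is Cohen-Macaulay iff $H^i_\m(R) = 0$ for $i < d$, and the vanishing of each graded piece is the vanishing of a reduced cohomology of a subspace of $|\cell|$, a topological condition. For the Buchsbaum property, one uses the standard characterization $\m \cdot H^i_\m(R) = 0$ for $i < d$. Since $H^i_\m(R)$ is squarefree, the multiplications by elements of $\m$ correspond to the generization maps of the associated sheaf; vanishing of $\m \cdot H^i_\m(R)$ for $i<d$ therefore translates to the vanishing of certain stalks of $\cH^{-i}((\cpx I_R)^+)$ outside the top-dimensional stratum of $|\cell|$, again a topological condition (namely, each link in $|\cell|$ is a $\kk$-homology manifold of the expected dimension).

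For the Gorenstein* property, recall that this requires $R$ to be Cohen-Macaulay together with $\W_R := H^{-d}(\cpx I_R) \cong R$ as squarefree modules (up to the appropriate shift). Applying $(-)^+$, this becomes $(\cpx I_R)^+ \simeq \const[d-1]$, i.e.\ Verdier's dualizing complex of $|\cell|$ is quasi-isomorphic to a shifted constant sheaf; equivalently, $|\cell|$ is a $\kk$-orientable homology $(d-1)$-sphere, a topological property.

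The principal technical difficulty is that $R$ is not a graded ring in general, so one cannot invoke multigraded Matlis duality directly, as in the Stanley-Reisner or normal affine semigroup case. This is precisely why the squarefree framework is needed: it supplies a surrogate $\ZZ\MM$-grading on all relevant local cohomology modules, and the functor $(-)^+$ rigidly transports the algebraic vanishing conditions into topological conditions on $|\cell|$. Once that framework is in place, the three assertions follow by straightforward translation from their classical counterparts for simplicial complexes.
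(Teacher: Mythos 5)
Your overall route --- squarefree modules, the functor $(-)^+$, and local duality via Theorem~\ref{Ishida} --- is the same as the paper's, but the two delicate properties are settled by steps that are wrong or missing in your write-up. For the Buchsbaum property you invoke ``the standard characterization $\m\cdot H^i_\m(R)=0$ for $i<d$''; this is not a characterization of Buchsbaumness but of quasi-Buchsbaumness, which is strictly weaker in general, so the hard implication (topological condition on $X$) $\Rightarrow$ ($R$ Buchsbaum) does not follow from your argument. The paper proves it differently: writing $d=\dim X$, the hypothesis $\cH^i(\cDx)=0$ for $i\ne -d$ forces $H^i(\cpx I_R)=[H^i(\cpx I_R)]_0$ for $i\ne -d-1$, hence the truncation $\tau_{-d-1}\cpx I_R$ is quasi-isomorphic to a complex of $\kk$-vector spaces, and then the criterion of St\"uckrad--Vogel \cite[II.Theorem~4.1]{SV} gives that $R_\m$ is Buchsbaum. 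Moreover, since $R$ carries no grading in general, Buchsbaumness is a condition at \emph{every} maximal ideal: you only discuss $\m$, whereas the paper also has to check that $R_{\m'}$ is Cohen--Macaulay for $\m'\ne\m$, again using that the lower cohomologies of $\cpx I_R$ are concentrated in degree $0\in\ZZ\MM$.

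For the Gorenstein* property, the equivalence you state in one line, ``$\omega_R\cong R$ as $\ZZ\MM$-graded modules iff $(\cpx I_R)^+$ is a shifted constant sheaf,'' hides the actual content. The functor $(-)^+$ forgets the degree-$0$ (empty-cell) component and is not faithful, so $(\omega_R)^+\cong\const_X$ only yields $[\omega_R]_a\cong\kk$ for $0\ne a\in\sM$; to recover $\omega_R\cong R$ the paper must use Poincar\'e duality to see $[\omega_R]_0\cong H^d(X;\kk)\cong\kk$, choose $0\ne x\in[\omega_R]_0$, show that $Rx$ splits off as a $\ZZ\MM$-graded direct summand, and invoke the indecomposability of $\omega_R$. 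Note also that the sheaf condition alone is strictly weaker than Gorenstein* (a triangulated torus has $\cDx\simeq\const_X[d]$ but its ring is not even Cohen--Macaulay), so the Cohen--Macaulay hypothesis must be carried along and expressed intrinsically --- in the paper, as $\cH^i(\cDx)=0$ for $i\ne -d$ together with $\rH^i(X;\kk)=0$ for $i\ne d$; your formulation of Cohen--Macaulayness through vanishing of (compactly supported) cohomology of the open stars $U_\sigma$ is not manifestly a property of the space $X$ alone, since those subspaces depend on the chosen cell decomposition.
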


For instance, both $\kk[\MM]$ and $\kk[\MM']$ of Example~\ref{sec:exmp_intro} (b) are Gorenstein*.
While some parts/cases of Corollary~\ref{sec:Intr2} have been obtained in existing papers, 
our argument gives systematic perspective.

\section{Toric face rings}
First, we shall recall the definition of a regular cell complex:
A \term{ finite regular cell complex} (cf. \cite[Section 6.2]{BH}) is a topological space $X$ 
together with a finite set $\cell$ of subsets of $X$ such that the following 
conditions are satisfied: 
\begin{enumerate}
\item $\emptyset \in \cell $ and 
$X = \bigcup_{\sigma \in \cell } \sigma$; 
\item the subsets $\sigma \in \cell $ are pairwise disjoint;
\item for each $\sigma \in \cell $, $\sigma \ne \emptyset$, 
there exists some $i \in \NN$ and a homeomorphism from an $i$-dimensional ball 
$\{ x \in \RR^i \mid ||x|| \leq 1 \}$ to the closure 
$\overline{\sigma}$ of $\sigma$ which maps $\{ x \in \RR^i \mid ||x|| < 1 \}$ onto $\sigma$.  
\item For any $\sigma \in \cell$, the closure 
$\overline{\sigma}$ can be written as the union of some cells in $\cell$. 
\end{enumerate}  

An element $\sigma \in \cell$ is called a {\it cell}. 
We regard $\cell$ as a poset with the order $>$ defined as follows; $\sigma \ge \tau$ 
if $\overline{\sigma} \supset \tau$.  
If $\overline{\sigma}$ is homeomorphic to an $i$-dimensional ball, 
we set $\dim \sigma = i$.  Here $\dim \emptyset = -1$. 
Set $\dim X = \dim \cell := \max \{ \, \dim \sigma \mid \sigma \in  \cell  \, \}$. 

Let $\s, \tau \in \cell $. If $\dim \s =i+1$, 
$\dim \tau = i-1$ and $\tau < \s$, 
then there are exactly two cells $\s_1, \s_2 \in \cell $ between  
$\tau$ and $\s$. (Here $\dim \s_1 = \dim \s_2 = i$.)
A remarkable property of a regular cell complex is the existence of an 
{\it incidence function} $\varepsilon$ satisfying the following conditions. 
\begin{enumerate}
\item To each pair $(\s, \tau)$ of cells, $\varepsilon$ assigns a 
number $\varepsilon(\s,\tau) \in \{0, \pm 1\}$. 
\item $\varepsilon(\s,\tau) \ne 0$ if and only if
$\dim \tau = \dim \s -1$ and $\tau < \s$.  
\item If $\dim \s =i+1$, $\dim \tau = i-1$ and 
$\tau < \s_1, \, \s_2 < \s$, $\sigma_1 \ne \sigma_2$, 
then we have 
$$\varepsilon(\s, \s_1) \, \varepsilon(\s_1, \tau) + 
\varepsilon(\s, \s_2) \, \varepsilon(\s_2, \tau) =0.$$
\end{enumerate}
We can compute the (co)homology groups of $X$ using the cell decomposition 
$\cell$ and an incidence function $\varepsilon$. 

\begin{exmp}\label{sec:exmp_cell_cpx}
We shall give two typical examples of a finite regular cell complex: one is associated with a simplicial complex $\Delta$ 
on the vertex set $\bra n := \mbra{1,\dots ,n}$, i.e., a subset of the power set $2^{\bra n}$ such that,
for $F, G \in 2^{\bra n}$, $F \subset G$ and $G \in \Delta$ imply $F \in \Delta$. Take its geometric realization $\geom \Delta$, and
let $\rho$ be the map giving the realization (see \cite{BH} for the definition of a geometric realization). Then $X := \geom \Delta$ together with
$\set{\relint(\rho(F))}{F \in \Delta}$ is a regular cell complex, where $\relint(\rho(F))$ denotes the relative interior of $\rho(F)$.

The other example is a polytope $P$. In this case, $P$ itself is the underlying topological space; 
the cells are the relative interiors of its faces.
\end{exmp}

\begin{dfn}\label{sec:cell_cpx_ver}
A \term{conical complex} consists of the following data. 
\begin{enumerate}
\item A finite regular cell complex $\cell$ satisfying the intersection property, i.e.,
for $\s,\t \in \cell$, there is a cell $\u \in \cell$ such that $\overline \u = \overline \s \cap \overline \t$;
\item A set $\C$ of finitely generated cones $C_{\s} \subset \RR^{\dim \s + 1}$
with $\s \in \cell$ and $\dim C_{\s} = \dim \s + 1$. 
\item An injection $\i_{\s,\t}:C_{\t} \to C_{\s}$ for $\s, \t \in \cell$ with $\s \ge \t$ satisfying the following. 
\begin{enumerate}
\item $\i_{\s,\t}$ can be lifted up to a linear map  $\RR^{\dim \t+1} \to \RR^{\dim \s +1}$. 
\item  The image $\i_{\s, \t}(C_{\t})$ is a face of $C_{\s}$.  
Conversely, for a face $C'$ of $C_\s$, there is a sole cell $\t$ with $\t \le \s$
such that $\i_{\s,\t}(C_\t) = C'$. 
Thus we have a one-to-one correspondence between 
$\{ \, \text{faces of $C_\s$} \, \}$ 
and $\{ \, \t \in \cell \mid \t \le \s \, \}$. 
\item $\i_{\s, \s} = \idmap_{C_{\s}}$ and $\i_{\s,\t} \circ \i_{\t,\u}$ = $\i_{\s,\u}$ for $\s,\t,\u \in \cell$
with $\s \ge \t \ge \u$.
\end{enumerate}
\end{enumerate}
We denote this structure by $(\C, \cell)$ or $\C$ simply.
\end{dfn}

\begin{rem}
(1) We have $\none \in \cell$ according to the definition of a regular cell complex, and
the corresponding cone $C_\none$ is $\{ 0 \}$.
Thus for a conical complex $(\C,\cell)$, each $C_\s \in \C$ is \term{pointed}, i.e., $\{ 0 \}$ 
is a face of $C_\s$.

(2) The concept of conical complexes was first defined by Bruns-Koch-R\"omer \cite{BKR} in a slightly different manner, but, under the additional condition that each cone is pointed, 
their definition is equivalent to ours. 
That is, our conical complexes are {\it pointed} conical complexes of \cite{BKR}. 
\end{rem}

For grasping the image of a conical complex $(\C,\cell)$,
it is helpful to regard the conical complex as
the object given by ``gluing" each cones along the injections $\i_{\s, \t}$.
A typical example of a conical complex is a pointed fan,
i.e., a finite collection $\C$ of pointed cones in $\RR^n$ satisfying the following properties:
\begin{enumerate}
\item for $C' \subset C \in \C$, $C'$ is a face of $C$ if and only if $C' \in \C$;
\item for $C,C' \in \C$, $C \cap C'$ is a common face of $C$ and $C'$.
\end{enumerate}
In this case, as an underlying cell complex, we can take 
$\set{\relint(C \cap \sph^{n-1})}{C \in \C}$, where $\sph^{n-1}$ denotes the unit sphere in 
$\RR^n$, and the injections $\i$ are inclusion maps.

\begin{exmp}
There exists a conical complex which is not a fan. In fact, consider the M\"obius strip
as follows.
\begin{figure}[h]\label{sec:Moebius}
$$
\xy /r2.5pc/:,
{\xypolygon3"A"{~={75}~:{(-1,1.7)::}~>{}\bullet}},
+(.8,.8),
{\xypolygon3"B"{~={75}~:{(-1,1.7)::}~>{}\bullet}},
{"A1"\PATH~={**@{-}}'"A2"'"A3"'"B3"'"B2"'"B1"'"A1"},
"A2";"B2"**@{-},
{\vtwist~{"A1"}{"B1"}{"A3"}{"B3"}},
"A1"*+!RD{x}, "A2"*+!R{y}, "A3"*+!LU{z},
"B1"*+!RD{u}, "B2"*+!R{v}, "B3"*+!LU{w}
\endxy
$$
\end{figure}
\noindent Regarding each rectangles as the cross-sections of 3-dimensional cones,
we have a conical complex that is not a fan (see \cite{BG}).
\end{exmp}

A monoidal complex plays a role similar to the defining semigroup of an affine semigroup ring.

\begin{dfn}[\cite{BKR}]
A \term{monoidal complex} $\MM$ supported by a conical complex $(\C,\cell)$ is a set of monoids
$\mbra{\M_\s}_{\s \in \cell}$ with the following conditions:
\begin{enumerate}
\item $\M_\s \subset \ZZ^{\dim \s+1}$ for each $\s\in \cell$, 
and it is a finitely generated additive submonoid (so $\M_\s$ is an affine semigroup);
\item $\M_\s \subset C_\s$ and $\RR_{\geq 0} \M_\s = C_\s$ for each $\s\in \cell$ 
(hence the cone $C_\s$ is automatically rational); 
\item for $\s,\t \in \cell$ with $\s \ge \t$, the map $\i_{\s,\t}:C_\t \to C_\s$ induces an isomorphism
$\M_\t \cong \M_\s \cap \i_{\s,\t}(C_\t)$ of monoids.
\end{enumerate}
\end{dfn}

For example, let $\C$ be a rational pointed fan in $\RR^n$. Then $\set{C \cap \ZZ^{n}} {C \in \C}$ gives 
a monoidal complex. More generally, a family of affine semigroups 
$\set{\M_C \subset \ZZ^n}{C \in \C}$ satisfying the following conditions, forms a monoidal complex;
\begin{enumerate}
\item $\RR_{\geq 0} \M_C = C$ for each $C \in \C$;
\item $\M_C \cap C' = \M_{C'}$ for $C, C' \in \C$ with $C' \subset C$.
\end{enumerate}

\begin{rem}\label{sec:rem_on_ccpx}
(1) In \cite[\S2]{BG02}, basic properties of a {\it rational polyhedral complex}, 
which gives a conical complex and a monoidal complex in a natural way, 
are discussed. 

(2) Even if a regular cell complex $\cell$ satisfies the intersection property, 
there does not exist a conical complex of the form $(\C, \cell)$ in general. 
For example, there is a simplicial complex $\Delta$ such that 
the geometric realization $||\Delta||$ is homeomorphic to a 3-dimensional sphere,
but $\Delta$ is not the boundary complex of any (4-dimensional) polytope.  
See, for example,  \cite[Notes of Chap. 8]{Z}.
Now take a $4$-dimensional ball, and let $\s$ be its interior.
Triangulating the boundary of the ball, which is a $3$-dimensional sphere,
according to $\Delta$, we obtain the cell complex $\cell := \Delta \cup \{ \s \}$
such that $\s > \tau$ for all $\tau \in \Delta$.
If there is a conical complex of the form $(\C, \cell)$, then the boundary complex of 
a cross section of the cone $C_\s \in \C$ coincides with $\Delta$. 
This is a contradiction.  

On the other hand, for any 2-dimensional regular cell complex $\cell$ satisfying the intersection property,
there is a conical complex $(\C,\cell)$
and a monoidal complex $\MM$ supported by it as follows.

Let $n \geq 3$ be an integer. It is an easy exercise to construct an affine semigroup 
$\M_n \subset \NN^3$ satisfying the following conditions. 
\begin{itemize}
\item[(i)] The cone $C:= \RR_{\geq 0} \M_n \subset \RR^3$ has exactly $n$ extremal rays, that is, 
its cross section is an $n$-gon.
\item[(ii)] For any 2-dimensional face $F$ of $C$, we have $F \cap \M_n \cong \NN^2$ as monoids.  
\end{itemize}
For a 2-dimensional cell $\s \in \cell$, set $n(\s) := \# \{ \tau \mid \tau \leq \s, \dim \tau =1 \}$. 
By the intersection property of $\cell$, we have $n(\s) \geq 3$. The assignment 
$\M_\s := \M_{n(\s)}$ for each 2-dimensional cell $\s$ gives a monoidal complex on $\cell$. 
\end{rem}

For a conical complex $(\C,\cell)$ and a monoidal complex $\MM$ supported by $\C$, we set
$$
\sM := \colimit_{\s \in \cell}\M_\s, \quad \szM := \colimit_{\s \in \cell}\ZZ \M_\s,
$$
where the direct limits are taken with respect to the inclusions $\i_{\s, \t} : \M_\t \to \M_\s$ and
induced map $\ZZ \M_\t \to \ZZ \M_\s$ respectively, for $\s, \t \in \cell$ with $\s \ge \t$.

Let $a, b \in \szM$. If there is some $\s \in \cell$ with $a, b \in \ZZ \M_\s$, 
by the intersection property of $\cell$, there is a unique minimal cell among 
these $\s$'s. Hence we can define $a \pm b \in \szM$. 
 
\begin{dfn}[\cite{BKR}]
Let $(\C,\cell)$ be a conical complex, $\MM$ a monoidal complex supported by $\C$, and $\kk$ a field.
Then the $\kk$-vector space
$$
\fring \MM := \Dsum_{a \in \sM} \kk \, t^a,
$$
where $t$ is a variable, equipped with the following multiplication
$$
t^a \cdot t^b = \begin{cases}
t^{a+b} & \text{if $ a,b \in \M_\s $ for some $ \s \in \cell $;}\\
0       & \text{otherwise,}
\end{cases}
$$
has a $\kk$-algebra structure. 
We call $\fring \MM$ the \term{toric face ring} of $\MM$ over $\kk$.
\end{dfn}

It is easy to see that $\dim R = \dim \cell +1$. 
When $\C$ is a rational pointed fan, $\fring \MM$ coincides with a toric face ring of
Ichim-R\"omer's sense (\cite{IR}). Moreover, if we choose $C_\s \cap \ZZ^n$ as $\M_\s$ for each $\s$,
$\fring \MM$ is just an earlier version due to Stanley (\cite{St}).
Henceforth we refer a toric face ring of $\MM$ supported by a fan as an \term{embedded} toric face ring.
Every Stanley-Reisner ring and every affine semigroup ring (associated
with a positive affine semigroup) can be established
as embedded toric face rings (see Example~\ref{sec:exmp_intro}).
The most difference between an embedded toric face ring and a non-embedded one, is
whether it has a nice $\ZZ^n$-grading or not;
an embedded toric face ring always has the natural $\ZZ^n$-grading such that the dimension,
as a $\kk$-vector space, of each homogeneous component is less than or equal to $1$.
However a non-embedded one does not have such a grading.

\medskip

Toric face rings can be expressed as a quotient ring of a polynomial ring.
Let $\MM$ be a monoidal complex supported by a conical complex $(\C,\cell)$, and $\mbra{a_e}_{e \in E}$  a family of elements
of $\sM$ generating $\fring \MM$ as a $\kk$-algebra, or equivalently, $\mbra{a_e}_{e \in E} \cap \M_\s$ 
generates $\M_\s$ for each $\s \in \cell$.
Then the polynomial ring $S := \kk[ \, X_e \mid e\in E \, ]$ surjects on $\fring \MM$. We denote, by $I_\MM$, its kernel.
Similarly we have the surjection $S_\s := \kk[ \, X_e \mid a_e \in \MM_\s, \ e \in E \, ] \epito \fring{\M_\s}$,
where $\fring{\M_\s}$ denotes the affine semigroup ring of $\M_\s$,
and denote its kernel by $I_{\M_\s}$.

\begin{prop}[{\cite[Proposition 2.6]{BKR}}]
With the above notation, we have
$$
I_\MM = A_\MM + \sum_{i = 1}^n SI_{\M_{\s_i}},
$$
where $\s_1, \dots , \s_n$ are the maximal cells of $\cell$, and $A_\MM$ is the ideal of $S$ generated by the squarefree monomials $\prod_{h \in H}X_h$ for which $\set{a_h}{h \in H}$
is not contained in $\M_\s$ for any $\s \in \cell$.
\end{prop}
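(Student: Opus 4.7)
The plan is to prove the two inclusions $J \subseteq I_\MM$ and $I_\MM \subseteq J$, writing $J := A_\MM + \sum_{i=1}^n S\, I_{\M_{\s_i}}$. The easy direction $J \subseteq I_\MM$ is checked on generators: a squarefree monomial generator $\prod_{h \in H} X_h$ of $A_\MM$ maps to $\prod_{h \in H} t^{a_h} \in \fring\MM$, which vanishes by the multiplication rule since $\{a_h\}_{h \in H}$ lies in no $\M_\s$; and for $f \in I_{\M_{\s_i}} \subseteq S_{\s_i}$, the composition $S_{\s_i} \hookrightarrow S \twoheadrightarrow \fring\MM$ factors as $S_{\s_i} \twoheadrightarrow \fring{\M_{\s_i}} \hookrightarrow \fring\MM$ (the second arrow being the $\kk$-linear subspace inclusion $t^a \mapsto t^a$ for $a \in \M_{\s_i}$), so $f$ is sent to zero.

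For the harder direction, take $f \in I_\MM$ and expand it in monomials, $f = \sum_\alpha c_\alpha X^\alpha$. First I reduce modulo $A_\MM$: any $X^\alpha$ whose support $\{a_e : \alpha_e > 0\}$ is not contained in any $\M_\s$ is divisible by the associated generator of $A_\MM$, so it may be discarded. Each remaining monomial $X^\alpha$ can then be assigned a maximal cell $\s_{i(\alpha)}$ with $X^\alpha \in S_{\s_{i(\alpha)}}$, in which case the image in $\fring\MM$ is $t^{b(\alpha)}$ for $b(\alpha) := \sum_e \alpha_e a_e \in \M_{\s_{i(\alpha)}} \subseteq \sM$. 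Grouping the surviving monomials by the value of $b(\alpha) \in \sM$, say $G_b := \{\alpha : b(\alpha) = b\}$, the equation $\pi(f) = 0$ forces $\sum_{\alpha \in G_b} c_\alpha = 0$ for every $b$. Hence the task reduces to proving that each partial sum $f_b := \sum_{\alpha \in G_b} c_\alpha X^\alpha$ lies in $J$; fixing some $\alpha_0 \in G_b$ and rewriting $f_b = \sum_\alpha c_\alpha (X^\alpha - X^{\alpha_0})$, this further reduces to showing $X^\alpha - X^{\alpha_0} \in J$ for any pair $\alpha, \alpha_0 \in G_b$.

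For this final reduction, set $\s := \s_{i(\alpha)}$ and $\s' := \s_{i(\alpha_0)}$, so $b \in \M_\s \cap \M_{\s'}$. By the intersection property of $\cell$, the unique cell $\u$ with $\overline{\u} = \overline{\s} \cap \overline{\s'}$ satisfies $\M_\u = \M_\s \cap \M_{\s'}$ inside $\sM$, so $b \in \M_\u$. Pick any presentation $b = \sum_e \gamma_e a_e$ with $a_e \in \M_\u$ and set $X^\gamma := \prod_e X_e^{\gamma_e} \in S_\u \subseteq S_\s \cap S_{\s'}$. Since $X^\alpha$ and $X^\gamma$ have the same image $t^b$ under $S_\s \twoheadrightarrow \fring{\M_\s}$, we get $X^\alpha - X^\gamma \in I_{\M_\s} \subseteq J$, and similarly $X^{\alpha_0} - X^\gamma \in I_{\M_{\s'}} \subseteq J$; subtracting gives $X^\alpha - X^{\alpha_0} \in J$ as required.

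The main obstacle is the bookkeeping in the last step, which is forced on us by the lack of a global multi-grading on $\fring\MM$: we cannot split $f$ by degree but instead must sort its monomials by their images in the colimit $\sM$ and then compare monomials living in different maximal cells. The intersection property of $\cell$ is used essentially here, through the identity $\M_\s \cap \M_{\s'} = \M_\u$, which produces the common reference monomial $X^\gamma \in S_\s \cap S_{\s'}$; without it the partial sums $f_b$ would not obviously decompose into elements of the ideals $I_{\M_{\s_i}}$ attached to individual maximal cells.
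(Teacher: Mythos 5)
Your proof is correct; note that the paper itself offers no argument for this statement (it is quoted with a citation to Bruns--Koch--R\"omer), so your write-up is judged on its own and it does give a complete proof along the expected lines: reduce modulo $A_\MM$, sort the surviving monomials by their image in $\sM$, and kill each group by binomials lying in the ideals $I_{\M_{\sigma_i}}$ of the maximal cells. Two steps deserve one more line each. First, in the easy inclusion, the vanishing of $\prod_{h\in H}t^{a_h}$ when $\{a_h\}_{h\in H}$ lies in no $\M_\sigma$ is not literally the two-factor multiplication rule: you need the small induction that a nonzero iterated product forces a common cell (if $t^{c}\cdot t^{a_k}\ne 0$ with $c=\sum_{h<k}a_h$, then $\supp(c)$ is the smallest face containing all $\supp(a_h)$, $h<k$, hence a face of any cell whose monoid contains $c$; so that cell's monoid contains all the $a_h$ together with $a_k$). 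Second, the identity $\M_\sigma\cap\M_{\sigma'}=\M_\upsilon$ inside $\sM$, with $\overline{\upsilon}=\overline{\sigma}\cap\overline{\sigma'}$, is true but is not a verbatim restatement of the intersection property: for $x\in\M_\sigma\cap\M_{\sigma'}$ the cell $\tau=\supp(x)$ satisfies $\tau\le\sigma$ and $\tau\le\sigma'$, hence $\tau\le\upsilon$, and then $x\in\M_\sigma\cap C_\tau=\M_\tau\subset\M_\upsilon$; the reverse inclusion is clear. You also use, correctly, that $\{a_e\}\cap\M_\upsilon$ generates $\M_\upsilon$ for an arbitrary (not necessarily maximal) cell $\upsilon$ when producing the reference monomial $X^\gamma$; this is exactly the equivalent form of the generation hypothesis stated before the proposition, so no gap there. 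With these two justifications made explicit, the argument is complete.
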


\begin{exmp}[{\cite[Example 4.6]{BKR}}]\label{sec:Moebius_ring}
Consider the conical complex given in Example \ref{sec:Moebius}, and choose each rectangles to be a unit square.
In this case, we can construct a monoidal complex $\MM$ such that $\M_\s = C_\s \cap \ZZ^{\dim C_\s}$ for all $\s$,
and then $u,v,w,x,y,z$ are generators of $\MM$. We set $S := \fring{X_u,X_v,X_w,X_x,X_y,X_z}$,
where $X_u,\dots ,X_z$ are variables. Clearly, 
$\fring{\M_\s}$ is a polynomial ring if $\dim \s \le 1$, and one of the following
$$
\fring{X_u,X_v,X_x,X_y}/(X_xX_v - X_uX_y), \quad \fring{X_v,X_w,X_y,X_z}/(X_vX_z - X_yX_w)
$$
$$
\fring{X_u,X_w,X_x,X_z}/(X_xX_z - X_uX_w),
$$
if $\dim \s = 2$. Therefore we conclude that
$$
I_\MM = (X_xX_v - X_uX_y, X_vX_z - X_yX_w, X_xX_z - X_uX_w, X_uX_vX_w, X_uX_vX_z) \subset S.
$$
We leave the reader to verify that the other squarefree monomials in $A_\MM$, 
e.g. $X_xX_yX_z$, are indeed contained in the above ideal.
\end{exmp}

In this paper, we often assume that $\kk[\MM]$ satisfies the following condition. 

\begin{dfn}
We say a toric face ring $\kk[\MM]$  (or a monoidal complex $\MM$)  
is {\it cone-wise normal}, if the affine semigroup ring 
$\kk[\M_\s]$ is normal for all $\s \in \cell$. 
\end{dfn}

If $\kk[\MM]$ is cone-wise normal, then  $\kk[\M_\s]$ is Cohen-Macaulay for all $\s \in \cell$. 
Clearly, the toric face rings given in Examples~\ref{sec:exmp_intro} and \ref{sec:Moebius_ring} are cone-wise normal.  

\begin{rem}
The notion of a cone-wise normal monoidal complex $\MM$ 
is equivalent to that of the lattice points 
${\mathcal WF}(\Pi_{\rm rat})$ of a {\it weak fan} ${\mathcal WF}$ introduced by 
Bruns and Gubeladze in  \cite[Definition~2.6]{BG02}.  
In this case, our ring $\kk[\MM]$ is the same thing as the ring $\kk[{\mathcal WF}]$ 
of \cite{BG02}. 

An affine semigroup ring $A=\kk[\M_\s]$ has a graded ring structure 
$A = \bigoplus_{i \in \NN} A_i$ with $A_0 = \kk$. 
The toric face ring given in Example~\ref{sec:Moebius_ring} also has an $\NN$-grading 
given by   $\deg X_u = \cdots =\deg X_z = 1$. This is not true in general;
there is a monoidal complex whose toric face ring does not have an $\NN$-grading. See \cite[Example~2.7]{BG02}.
\end{rem}

For a commutative ring $A$, let $\Mod A$ (resp. $\mod A$) denote the category of 
(resp. finitely generated) $A$-modules.

\begin{dfn}
Let $R := \fring \MM$ be a toric face ring of a monoidal complex $\MM$ supported by
a conical complex $(\C,\cell)$.
\begin{enumerate}
\item $M \in \Mod R$ is said to be $\zM$-graded if the following conditions are satisfied;
  \begin{enumerate}
  \item $M = \Dsum_{a \in \szM}M_a$ as $\kk$-vector spaces;
  \item $t^a \cdot M_b \subset M_{a + b}$ if $a \in \M_\s$ and $b \in \ZZ\M_\s$ for some $\s \in \cell$,
and $t^a \cdot M_b = 0$ otherwise.
  \end{enumerate}
\item $M \in \Mod R$ is said to be $\MM$-graded if it is $\zM$-graded and $M_a = 0$ for $a \not\in \sM$.
\end{enumerate}
\end{dfn}

Of course, setting $R_a := \kk \, t^a$ for each $a \in \sM$, we see that $R$ itself is $\sM$-graded.
Any \term{monomial ideal}, i.e., an ideal generated by elements of the form $t^a$
for some $a \in \sM$, is $\MM$-graded, and hence $\zM$-graded. 
Conversely, every $\zM$-graded ideal is a monomial ideal.

Let $\Lgr R$ (resp. $\lgr R$) denote the subcategory of $\Mod R$ (resp. $\mod R$)
whose objects are $\zM$-graded
$R$-modules and morphisms are degree preserving maps, i.e., $R$-homomorphisms $f:M \to N$ such that
$f(M_a) \subset N_a$ for $a \in \szM$. It is clear that $\Lgr R$ and $\lgr R$ are abelian.

For each $\s \in \cell$,  the ideal $\p_\s := (t^a \mid a \not\in \M_\s) \subset R$ is a $\zM$-graded prime ideal 
since $R/\p_\s \cong \fring{\M_\s}$.
Conversely, every $\zM$-graded prime ideals are of this form.

\begin{lem}\label{sec:prime_ideal}
There is a one-to-one correspondence between the cells in $\cell$ and the $\zM$-graded prime ideals of $R$.
$$
\xymatrix@R=0pt{
\cell \ar@{<->}[r] & \left\{\parbox{9em}{\centering all the $ \zM $-graded
prime ideals of $R$}\right\} \\
\rotatebox{90}{$ \in $} & \rotatebox{90}{$ \in $} \\
\s \ar@{<->}[r] & \p_\s
}
$$
\end{lem}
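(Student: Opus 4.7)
The plan is to verify each $\p_\s$ is a $\ZZ\MM$-graded prime, and then to establish injectivity and surjectivity of $\s\mapsto\p_\s$ separately. For the first two points, $\p_\s$ is by definition the $\kk$-linear span of the monomials $t^a$ with $a\in|\MM|\setminus\M_\s$, so it is $\MM$-graded (and a fortiori $\ZZ\MM$-graded), and $R/\p_\s$ is canonically isomorphic as a $\kk$-algebra to the affine semigroup ring $\kk[\M_\s]$; the latter is a domain because $\M_\s$ embeds in the free abelian group $\ZZ^{\dim\s+1}$. For injectivity, the ideal $\p_\s$ recovers its monomial support $|\MM|\setminus\M_\s$ and hence the submonoid $\M_\s\subset|\MM|$, and distinct cells carry distinct such submonoids: by the intersection property, a cell is the unique minimal one containing any chosen interior point of its cone.

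The substance of the proof is surjectivity. Let $\p$ be a $\ZZ\MM$-graded prime. Since every homogeneous component of $R$ is either zero or the one-dimensional line $\kk\,t^a$, $\p$ is automatically a monomial ideal $\bigoplus_{a\in A}\kk\,t^a$ for some $A\subset|\MM|$. The equality $|\MM|=\bigcup_i\M_{\s_i}$ over the maximal cells $\s_i$ gives $\bigcap_i\p_{\s_i}=0$, so $R$ is reduced and its minimal primes are exactly the pairwise incomparable $\p_{\s_i}$ for $\s_i$ maximal. In particular $\p$ contains some $\p_{\s_0}$ with $\s_0$ maximal, and $\bar\p:=\p/\p_{\s_0}$ is a $\ZZ\M_{\s_0}$-graded prime of $R/\p_{\s_0}=\kk[\M_{\s_0}]$.

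To finish I would invoke the standard classification: the monomial prime ideals of an affine semigroup ring $\kk[\M]$ are in one-to-one correspondence with the faces of the cone $C=\RR_{\geq 0}\M$. For a normal $\M$ this is well known, and to cover our (possibly non-normal) $\M_{\s_0}$ I would pass through the saturation $\M_{\s_0}^{\ast}=C_{\s_0}\cap\ZZ\M_{\s_0}$: the inclusion $\kk[\M_{\s_0}]\hookrightarrow\kk[\M_{\s_0}^{\ast}]$ is a finite, $\ZZ\M_{\s_0}$-graded integral extension, so graded lying-over identifies the monomial primes downstairs with contractions of those upstairs, and the latter are classified by the faces of $C_{\s_0}$. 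Applied to $\bar\p$, this yields a unique face of $C_{\s_0}$ of the form $\i_{\s_0,\t}(C_\t)$ with $\t\le\s_0$, and the monoidal complex axiom $\M_{\s_0}\cap\i_{\s_0,\t}(C_\t)=\M_\t$ then forces $\p=\p_\t$, as required. The technical heart, and the only place where the argument is non-trivial, is this graded face/prime correspondence in the non-normal setting; everything else is bookkeeping inside the cell complex.
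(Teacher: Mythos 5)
Your argument is correct, and it is essentially the proof the paper delegates to \cite[Lemma~2.1]{IR}: every $\zM$-graded prime is a monomial ideal containing some minimal prime $\p_{\s_0}$ with $\s_0$ maximal, and one then classifies the monomial primes of the affine semigroup ring $\kk[\M_{\s_0}]$ by the faces of its cone, which the conical-complex axioms match with the cells $\t \le \s_0$. The only stylistic difference is your detour through the saturation: the correspondence between faces and monomial primes holds for arbitrary (not necessarily normal) affine monoids, so the graded lying-over step, while valid, is dispensable.
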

\begin{proof}
The proof is quite the same as \cite[Lemma 2.1]{IR}.
\end{proof}

For an ideal $I$ of $R$, we denote, by $I^*$, the ideal of $R$
generated by all the monomials belonging to $I$.
As in the case of a usual grading, we have the following:

\begin{lem}\label{sec:graded_prime}
For a prime ideal $\p$ of $R$, $\p^*$ is also prime, and hence is a $\zM$-graded prime ideal.
\end{lem}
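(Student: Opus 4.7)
The plan is to reduce to the familiar $\ZZ^n$-graded situation by passing to a suitable quotient. The target quotient is $\bar R := R/\p_{\s_0}$ for a maximal cell $\s_0$ with $\p_{\s_0} \subset \p$; this $\bar R \cong \kk[\M_{\s_0}]$ is an honest $\ZZ^{d_{\s_0}}$-graded affine semigroup ring, so the standard fact that the $*$-ideal of a prime in an abelian-group-graded commutative ring is again prime (see e.g.\ \cite[\S1.5]{BH}) is available.

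First I would verify that every prime $\p$ of $R$ contains $\p_{\s_0}$ for some maximal cell $\s_0$. The key input is that any $\zM$-graded ideal of $R$ is the $\kk$-span of its monomials, which is immediate from the fact that each $\zM$-graded component of $R$ is one-dimensional. Applying this to the $\zM$-graded ideal $\bigcap_{\s \text{ max}} \p_\s$, one reduces the computation to monomials: $t^a \in \bigcap_{\s \text{ max}}\p_\s$ iff $a \notin \M_\s$ for every maximal $\s$, iff $a \notin \bigcup_{\s \text{ max}}\M_\s = \sM$, forcing $t^a = 0$. So $\bigcap_{\s \text{ max}} \p_\s = 0$, and then $\prod_{\s \text{ max}} \p_\s \subset 0 \subset \p$, which together with primeness of $\p$ produces a maximal $\s_0$ with $\p_{\s_0} \subset \p$.

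Next, let $\pi: R \twoheadrightarrow \bar R$ be the quotient and $\bar\p := \pi(\p) = \p/\p_{\s_0}$. By the cited graded fact, the ideal $\bar\p^*$ of $\bar R$ generated by the monomials in $\bar\p$ is prime, hence $\mathfrak q := \pi^{-1}(\bar\p^*)$ is prime in $R$. I would finish by identifying $\mathfrak q$ with $\p^*$. On one hand $\pi$ is $\zM$-graded and $\bar\p^*$ is $\zM$-graded in $\bar R$, so $\mathfrak q$ is a $\zM$-graded ideal of $R$ with $\p_{\s_0} \subset \mathfrak q \subset \p$; since $\p^*$ is by construction the largest $\zM$-graded ideal in $\p$, we get $\mathfrak q \subset \p^*$. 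Conversely, $\p^* \supset \p_{\s_0} = \ker \pi$, so $\pi(\p^*)$ is a $\zM$-graded ideal of $\bar R$ contained in $\bar\p$; hence $\pi(\p^*) \subset \bar\p^*$, and $\p^* = \pi^{-1}(\pi(\p^*)) \subset \mathfrak q$. Thus $\p^* = \mathfrak q$ is prime.

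The main obstacle, beyond the cited standard graded fact, is the first step producing a maximal $\s_0$ with $\p_{\s_0} \subset \p$; everything afterward is bookkeeping about $\zM$-graded ideals and the quotient map. Note that the argument uses essentially nothing about $\cell$ or $\MM$ beyond Lemma~\ref{sec:prime_ideal} and the one-dimensionality of the $\zM$-graded components of $R$, so it is robust to the absence of a global $\ZZ^n$-grading on $R$.
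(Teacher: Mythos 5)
Your proposal is correct and follows essentially the same route as the paper: both first produce a maximal cell $\s_0$ with $\p_{\s_0}\subset\p$ (via $\bigcap_{\s\ \mathrm{max}}\p_\s=0$), then pass to the surjection $R\twoheadrightarrow R/\p_{\s_0}\cong\kk[\M_{\s_0}]$ and invoke the standard fact that the monomial part of a prime in a $\ZZ^{d_{\s_0}}$-graded affine semigroup ring is prime, identifying its preimage with $\p^*$. Your write-up just makes explicit the bookkeeping (that $\p^*$ is the largest $\zM$-graded ideal in $\p$, that $\p^*\supset\ker\pi$, and that $\pi(\p^*)=\bar\p^*$) which the paper leaves implicit.
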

\begin{proof}
Since the ideal $0$ can be decomposed as follows
$$
\bigcap_{\substack{\s \in \cell \\ \s\text{:maximal}}}\p_\s = 0,
$$
$\{ \, \p_\s \mid \text{$\s$ is a maximal cell of $\cell$} \, \}$ is the set of minimal primes of $R$. 
Hence $\p$ must contain $\p_\s$ for some $\s \in \cell$. It follows that $\p^* \supset \p_\s$. 
Consider the images $\rho(\p)$ and $\rho(\p^*)$ by the surjection $\rho:R \epito \fring{\M_\s}$.
Then $\rho(\p)$ is prime and $\rho(\p^*)$ is the ideal generated by the monomials 
contained in $\rho(\p)$, whence is prime. Therefore we conclude that $\p^*$ is also prime.
\end{proof}

\begin{cor}\label{sec:radical}
Let $\ideal a$ be a $\zM$-graded ideal of $R$. Then its radical ideal $\rad{\ideal a}$ is also $\zM$-graded.
\end{cor}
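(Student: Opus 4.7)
The plan is to express $\rad{\ideal{a}}$ as an intersection of $\zM$-graded ideals, exploiting Lemma~\ref{sec:graded_prime} together with the fact that $\zM$-graded ideals are precisely the monomial ideals. Since intersections and sums of $\zM$-graded submodules of an ambient $\zM$-graded module are again $\zM$-graded (this is immediate from the direct-sum decomposition defining a $\zM$-grading), reducing to such an intersection will finish the proof.

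First I would recall from the discussion preceding Lemma~\ref{sec:prime_ideal} that a $\zM$-graded ideal is the same thing as a monomial ideal, i.e.\ one generated by elements of the form $t^a$ with $a \in \sM$. Next, given any prime $\p \supset \ideal{a}$, I would consider the associated monomial prime $\p^*$ from Lemma~\ref{sec:graded_prime}. Because $\ideal{a}$ is generated by monomials and every such generator lies in $\p$, it automatically lies in $\p^*$; thus $\ideal{a} \subset \p^* \subset \p$, and $\p^*$ is a $\zM$-graded prime ideal containing $\ideal{a}$.

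Now I would write
\[
\rad{\ideal{a}} \;=\; \bigcap_{\p \supset \ideal{a}} \p \;\supset\; \bigcap_{\p \supset \ideal{a}} \p^*,
\]
where the primes $\p$ on the right run over all primes containing $\ideal{a}$. The reverse inclusion follows since each $\p^*$ appearing on the right is itself a prime containing $\ideal{a}$, hence contains $\rad{\ideal{a}}$. Therefore $\rad{\ideal{a}}$ is the intersection of the $\zM$-graded primes $\p^*$, and is $\zM$-graded.

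I do not expect serious obstacles: the only non-trivial ingredient is Lemma~\ref{sec:graded_prime}, which is already at our disposal, and the characterization of $\zM$-graded ideals as monomial ideals (a direct consequence of the definition of the $\zM$-grading on $R$). The main point to check carefully is that each generator of $\ideal{a}$ really does sit in $\p^*$, but this is immediate from the monomial generation of $\ideal{a}$.
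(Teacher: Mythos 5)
Your argument is correct and is essentially the paper's own proof: both show $\ideal a \subset \p^*$ for every prime $\p \supset \ideal a$ (using that $\ideal a$ is a monomial ideal) and then identify $\rad{\ideal a}$ with the intersection $\bigcap_{\p \supset \ideal a} \p^*$ of $\zM$-graded primes via Lemma~\ref{sec:graded_prime}. No gaps; the extra remark that an intersection of $\zM$-graded ideals is $\zM$-graded is the same implicit step the paper uses.
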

\begin{proof}
Since $\ideal a \subset \p^*$ holds for a prime ideal $\p$ with $\ideal a \subset \p$,
we have
$$
\bigcap_{\p \supset \ideal a} \p^* \subset \bigcap_{\p \supset \ideal a} \p = \rad{\ideal a}
\subset \bigcap_{\p \supset \ideal a} \p^*,
$$
and therefore $\rad{\ideal a} = \bigcap_{\p \supset \ideal a} \p^*$.
\end{proof}

\section{C\v ech complexes and local cohomologies}\label{sec:Cech_cpx_and_local_cohom}

Let $(\C,\cell)$ be a conical complex, and $\MM$ a monoidal complex.
For $\s \in \cell$, set $T_\s := \set{t^a}{a \in \M_\s} \subset R := \fring \MM$.
Then $T_\s$ forms a multiplicatively closed subset consisting of monomials.
Moreover, a multiplicatively closet subset $T$ consisting of monomials is contained in 
some $T_\s$, unless $T \ni 0$.   

\begin{lem}\label{sec:localization}
Let $M \in \Lgr R$, and let $T$ be a multiplicatively closed subset of $R$ consisting of monomials.
Then $T^{-1}M \in \Lgr R$.
\end{lem}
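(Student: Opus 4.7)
The plan is to reduce to the case of an ordinary homogeneous localization over a single affine semigroup ring by exploiting the monoidal structure.

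First, dispatch the trivial case: if $0 \in T$, then $T^{-1}M = 0$ and there is nothing to prove. So assume $0 \notin T$. By the remark made just before the lemma, every multiplicatively closed subset of monomials not containing $0$ sits inside $T_\s$ for some $\s \in \cell$. Fix such a $\s$, and let $M^\s := \Dsum_{b \in \ZZ\M_\s} M_b$. For any $t^a \in T \subset T_\s$ (so $a \in \M_\s$) and any $m \in M_b$ with $b \notin \ZZ\M_\s$, the definition of a $\zM$-grading forces $t^a \cdot m = 0$; hence such an $m$ satisfies $m/1 = 0$ in $T^{-1}M$. Consequently the natural map $T^{-1}M^\s \to T^{-1}M$ is an isomorphism, and we may replace $M$ by $M^\s$.

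Now $M^\s$ carries a usual $\ZZ\M_\s$-grading, $T$ consists of homogeneous elements of this grading (with degrees in $\M_\s \subset \ZZ\M_\s$), and $\ZZ\M_\s$ is an honest abelian group. Thus the standard construction of homogeneous localization applies: for $c \in \ZZ\M_\s$, set
\[
(T^{-1}M)_c := \set{m/t^a}{t^a \in T,\ m \in M_b,\ b - a = c\ \text{in}\ \ZZ\M_\s}.
\]
Well-definedness of this subspace requires checking that if $m/t^a = m'/t^{a'}$ in $T^{-1}M$ with $m \in M_b$, $m' \in M_{b'}$, and $a, a' \in \M_\s$, then $b - a = b' - a'$; but the equivalence supplies $t^{a''} \in T$ with $t^{a''+a'}m = t^{a''+a}m'$, and since both sides are homogeneous the equality of degrees in $\ZZ\M_\s$ yields $a'' + a' + b = a'' + a + b'$, hence $b - a = b' - a'$. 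A standard argument then shows $T^{-1}M = \Dsum_{c \in \ZZ\M_\s}(T^{-1}M)_c$ as $\kk$-vector spaces.

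Finally, I promote this $\ZZ\M_\s$-grading to a $\zM$-grading via the canonical map $\ZZ\M_\s \embto \zM$ (setting homogeneous components for degrees outside the image to be zero), and verify compatibility with the $R$-action. For $t^c \in R$ with $c \in \M_\t$ and $m/t^a \in (T^{-1}M)_{b-a}$, the product $t^c \cdot (m/t^a) = (t^c \cdot m)/t^a$ is either zero or, by the grading on $M$, lies in $M_{c+b}/t^a$, which has degree $c + (b-a)$ in $\zM$; this matches the requirement (a) and (b) in the definition of a $\zM$-graded module.

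The one place that needs care — and which I expect to be the main technical point — is checking that the relation defining $T^{-1}M$ preserves degree, i.e., the well-definedness argument in the second paragraph. Once that is in place, the remaining verifications are formal consequences of the grading axioms on $M$ and $R$ and the fact that the localization takes place inside a single $T_\s$.
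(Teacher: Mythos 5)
The reduction to a single cell is where your argument breaks down. From $T \subseteq T_\s$ and $m \in M_b$ with $b \notin \ZZ\M_\s$ you conclude $t^a \cdot m = 0$ for every $t^a \in T$, but that is not what the definition of a $\zM$-graded module says: the product is forced to vanish only when there is \emph{no} cell $\u \in \cell$ with $a \in \M_\u$ and $b \in \ZZ\M_\u$. Since $a$ lies in $\M_\u$ for every cell $\u \geq \supp(a)$, and $T$ is typically contained in several $T_\s$ at once, the product can be nonzero. Concretely, let $R = \kk[x,y,z]/(xz)$ be the Stanley--Reisner ring of the path $1$--$2$--$3$ and $T = \{\, y^n \mid n \geq 0 \,\}$; then $T \subseteq T_{\s_1} \cap T_{\s_2}$ for the two edges $\s_1 = \{1,2\}$ and $\s_2 = \{2,3\}$. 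Choosing $\s = \s_2$, the degree of $x$ lies outside $\ZZ\M_{\s_2}$, yet $x y^n \neq 0$ for all $n$, so $x/1 \neq 0$ in $T^{-1}R$ and the map $T^{-1}M^{\s} \to T^{-1}M$ is not surjective (the degenerate choice $T = \{1\}$ already exhibits the same failure). As a result, the grading you construct is supported only on $\ZZ\M_\s \subset \szM$, which cannot be right: the localizations $T_\t^{-1}R$ occurring in the \v Cech complex have graded pieces in degrees coming from \emph{all} cells $\geq \t$, not from a single one.

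What is true --- and is how the paper argues --- is the fraction-by-fraction version of your observation: for $x \in M_b$ and $t^a \in T$, if there is no cell $\u$ with $a, b \in \ZZ\M_\u$, then $t^a x = 0$ and hence $x/t^a = (t^a x)/t^{2a} = 0$; otherwise $b - a$ is a well-defined element of $\szM$ (by the intersection property), and one sets $(T^{-1}M)_\lambda := \sum \kk \cdot x/t^a$, the sum over homogeneous fractions with $b - a = \lambda$, for each $\lambda \in \szM$, and checks that $T^{-1}M = \Dsum_{\lambda \in \szM}(T^{-1}M)_\lambda$. Your well-definedness computation (clearing denominators and comparing degrees) is exactly the kind of verification needed for that last step, but it must be carried out with numerator degrees ranging over all of $\szM$ and with the partial addition of $\szM$, not inside the single group $\ZZ\M_\s$; so the heart of the lemma is the global grading over $\szM$, which your reduction discards.
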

\begin{proof}
Take any $x/t^a \in T^{-1}M$ with $a \in \sM$, $b \in \szM$, and $x \in M_b$. 
If there is no $\s \in \cell$ with $a, b \in \ZZ \M_\s$, then $x/t^a = (xt^a)/t^{2a} = 0$;
otherwise, $b-a$ is well-defined and in $\szM$. Now for $\lambda \in \szM$, set
$$
(T^{-1}M)_{\lambda} := \sum_{x \in M_b, b-a = \lambda}\kk\cdot\frac x{t^a}
$$
Then we have $T^{-1}M = \Dsum_{\lambda \in \szM}(T^{-1}M)_{\lambda}$ as $\kk$-vector spaces, which gives $T^{-1}M$
a $\szM$-grading.
\end{proof}

Well, set 
$$
L^i_R := \Dsum_{\substack{\s \in \cell \\ \dim \s = i-1}}T_\s^{-1}R
$$
and  define $\partial:L^i_R \to L^{i+1}_R$
by
$$
\partial(x) = \sum_{\substack{\t \ge \s \\ \dim \t = i}} \e(\t,\s) \cdot \nat(x)
$$
for $x \in T^{-1}_\s R \subset L^i_R$,
where $\e$ is an incidence function on $\cell$ and $\nat$ is a natural map $T^{-1}_\s R \to T^{-1}_\t R$
for $\s \le \t$.
Then $(\cpx L_R, \partial)$ forms a complex in $\Lgr R$:
$$
\cpx L_R: 0 \longto L^0_R \longto^\partial L^1_R \longto^\partial \cdots \longto^\partial 
L^d_R \longto 0,
$$
where $d= \dim R = \dim \cell +1$. 
We set $\m := (t^a \mid 0 \not= a \in \sM)$. This is a maximal ideal of $R$.

\begin{prop}[{cf. \cite[Theorem~4.2]{IR}}]
For any $R$-module $M$,
$$
H^i_\m(M) \cong H^i(\cpx L_R \tns_R M),
$$
for all $i$.
\end{prop}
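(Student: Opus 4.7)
The plan is to verify that the functor $F^i(-) := H^i(L^\bullet_R \otimes_R -)$ coincides with the right derived functor of $\Gamma_\m(-)$. Since each $L^i_R$ is a direct sum of localizations $T_\s^{-1}R$ and is therefore flat over $R$, the assignment $M \mapsto L^\bullet_R \otimes_R M$ carries short exact sequences of $R$-modules to short exact sequences of complexes, and $\{F^i\}_{i \ge 0}$ thus acquires the structure of a cohomological $\delta$-functor on $\Mod R$. It hence suffices to prove (a) $F^0(M) = \Gamma_\m(M)$ for every $M$, and (b) $F^i(I) = 0$ for every $i > 0$ and every injective $R$-module $I$; universality then forces $F^i \cong H^i_\m$.

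For (a), the kernel of the differential $M = L^0_R \otimes M \to \bigoplus_{\dim \s = 0} T_\s^{-1}M$ consists of those $m \in M$ for which some monomial from each $T_\s$ (with $\s$ a vertex of $\cell$) annihilates $m$. The inclusion $\Gamma_\m(M) \subseteq \ker$ is immediate. Conversely, starting from such an $m$, I would establish $\opn{Supp}_R(Rm) \subseteq \{\m\}$: for any prime $\p \ne \m$, Lemma~\ref{sec:graded_prime} yields $\p^* = \p_\t$ for some nonempty cell $\t$, and picking a vertex $\s \le \t$ (which exists because every regular cell decomposition of a ball has $0$-cells) produces an annihilator of $m$ inside $T_\s \subseteq R \setminus \p_\t \subseteq R \setminus \p$. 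Hence $m = 0$ in $M_\p$, and since $R$ is Noetherian and $Rm$ is finitely generated, $m \in \Gamma_\m(M)$.

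For (b), decompose $I$ into indecomposable injectives $E(R/\p)$ and analyze each summand. If $\p = \m$, then $E(R/\m)$ is $\m$-torsion, every nonzero monomial acts locally nilpotently, and $T_\s^{-1}E(R/\m) = 0$ for $\s \ne \none$; thus $L^\bullet_R \otimes E(R/\m)$ degenerates to $E(R/\m)$ in degree $0$. If $\p \ne \m$, write $\p^* = \p_\s$ with $\s \ne \none$. The decisive point, using that $f \in R$ is a unit on $E(R/\p)$ precisely when $f \notin \p$ and is locally nilpotent on $E(R/\p)$ when $f \in \p$, is that $T_\t^{-1}E(R/\p) = E(R/\p)$ for $\t \le \s$ and vanishes otherwise. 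Consequently $L^\bullet_R \otimes E(R/\p)$ is, up to an index shift by one, the augmented cellular cochain complex of the closed cell $\overline{\s}$ with coefficients in $E(R/\p)$, whose regular cell structure is given by $\{\, \t \in \cell \mid \t \le \s \,\}$. Because $\overline{\s}$ is homeomorphic to a closed ball and hence contractible, this augmented complex is acyclic, giving $F^i(E(R/\p)) = 0$ for all $i \ge 0$.

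The principal obstacle is the non-$\m$ case of (b): absent the $\ZZ^n$-grading exploited in the embedded setting of \cite{IR}, one cannot directly invoke graded injective structure theory, and the non-graded prime $\p$ must be funneled through its monomial envelope $\p^* = \p_\s$ via Lemma~\ref{sec:graded_prime}. Once the dichotomy ``units versus locally nilpotents'' on $E(R/\p)$ is matched to the partial order $\t \le \s$ on $\cell$, the recognition of $L^\bullet_R \otimes E(R/\p)$ as a cellular cochain complex of a ball, and hence its acyclicity, is a formal consequence.
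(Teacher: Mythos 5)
Your proposal is correct and follows essentially the same route as the paper: flatness of the $L^i_R$ gives the $\delta$-functor (or the paper's conditions (1)--(3)) structure, $H^0(\cpx L_R \tns_R M)$ is identified with $\Gamma_\m(M)$ by funneling primes through the graded-prime lemma, and the vanishing on injectives — which the paper simply delegates to Ichim--R\"omer, noting exactly that Lemma~\ref{sec:graded_prime} substitutes for the missing $\ZZ^n$-grading — is the argument you spell out via the decomposition into $E_R(R/\p)$, the dichotomy of units versus locally nilpotent monomials, and the acyclicity of the augmented cellular cochain complex of the ball $\overline{\s}$. One small repair in your step (a): the inclusion $R\setminus\p_\t \subseteq R\setminus\p$ is backwards (one has $\p_\t = \p^* \subseteq \p$); the justification you actually need, and implicitly use, is that any monomial lying in $\p$ lies in $\p^* = \p_\t$ by the definition of $\p^*$, so $T_\s \cap \p = \emptyset$ and the monomial annihilating $m$ found in $T_\s$ is indeed outside $\p$.
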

\begin{proof}
It suffices to show the following:
\begin{enumerate}
\item $H^0(\cpx L_R \tns_R M) \cong H^0_\m(M)$;
\item for a short exact sequence $0 \to M_1 \to M_2 \to M_3 \to 0$ in $\Mod R$, the induced one
$0 \to \cpx L_R \tns_R M_1 \to \cpx L_R \tns_R M_2 \to \cpx L_R \tns_R M_3 \to 0$ is also exact;
\item for any injective $R$-module $I$, $H^i(\cpx L_R \tns_R I) = 0$ for all $i \ge 1$.
\end{enumerate}
Let $\ideal a$ be the ideal generated by elements $t^a$ with $0 \not= a \in C_\s$ 
for some $1$-dimensional cone $C_\s$.
Since $\Ker{L^0_R \tns_R M \to L^1_R \tns_R M} = H^0_{\ideal a}(M)$, to prove (1), we only have to show that
$\rad{\ideal a} = \m$. Let $\p$ be a prime containing $\ideal a$. Since $\ideal a$ is graded,
we have $\p^* \supset \ideal a$.
Thus there exists $\t \in \cell$ such that $\p_\t \supset \ideal a$,
but then $C_\t$ contains no $1$-dimensional face.
Therefore we conclude that $\p_\t = \p_\none = \m$, which implies $\rad{\ideal a} = \m$.

The condition (2) follows easily from the flatness of the localization. 
For (3), we can apply the same argument
of Ichim and R\"omer \cite{IR} for embedded toric face rings 
(but we need to use Lemma \ref{sec:graded_prime}).
\end{proof}

Let $\RGm : \Db(\Mod R) \to \Db(\Mod R)$ be the right derived functor of 
$\G := \colimit_n \Hom(R/\m^n,-)$, where $\Db(\Mod R)$ is the bounded derived category of $\Mod R$.
Recall that $H^i(\RGm(M)) = H^i_\m(M)$ for all $i$ and $M \in \Mod R$.
The usual spectral sequence argument of double complexes tells us that $\cpx L_R$ 
is a flat resolution of $\RGm (R)$, and therefore we have the following. 

\begin{cor}\label{derived tensor}
For a bounded complex $\cpx M$ of $R$-modules, $\RGm(\cpx M)$ and $\cpx L_R \tns_R \cpx M$ are isomorphic  
in $\Db(\Mod R)$.
\end{cor}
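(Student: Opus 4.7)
My plan is to run a standard double-complex argument exhibiting both $\cpx L_R \tns_R \cpx M$ and $\RGm(\cpx M)$ as quasi-isomorphic to one common totalization. First I would observe that each component $L^i_R = \Dsum_{\dim \s = i-1} T_\s^{-1} R$ is a flat $R$-module, being a direct sum of localizations; hence $\cpx L_R$ is a bounded complex of flats, and the functor $\cpx L_R \tns_R -$ preserves quasi-isomorphisms of bounded-below complexes. I would then choose a bounded-below injective resolution $\cpx M \xrightarrow{\simeq} \cpx I$, so that $\RGm(\cpx M) \simeq \G(\cpx I)$ in $\Db(\Mod R)$, and form the double complex $K^{p,q} := L^p_R \tns_R I^q$ with total complex $\opn{Tot}(K)$. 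Since $\cpx L_R$ is bounded and $\cpx I$ is bounded below, both spectral sequences of $K^{\bullet,\bullet}$ converge strongly in a horizontal strip.

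Two natural comparison maps now enter the picture. The quasi-isomorphism $\cpx M \to \cpx I$ induces $\cpx L_R \tns_R \cpx M \to \opn{Tot}(K)$, which is a quasi-isomorphism by flatness of $\cpx L_R$. For the other map I would use that any $x \in \G(I^q)$ is annihilated by some power of $\m$; since for each $\s \ne \none$ the monoid $\M_\s$ contains a non-zero element, $T_\s$ meets $\m$, so $x$ vanishes in $T_\s^{-1} I^q$. Consequently the inclusion $\G(I^q) \embto I^q = L^0_R \tns_R I^q$ lies in the kernel of the horizontal differential and assembles into a chain map $\G(\cpx I) \to \opn{Tot}(K)$. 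To check that this second map is also a quasi-isomorphism, I would compute the spectral sequence obtained by taking horizontal cohomology first: the preceding proposition gives $E_1^{p,q} = H^p(\cpx L_R \tns_R I^q) \cong H^p_\m(I^q)$, which vanishes for $p \ge 1$ by injectivity of $I^q$ and equals $\G(I^q)$ for $p = 0$. The sequence therefore collapses onto the row $p = 0$ to yield $\G(\cpx I)$, and the edge morphism recovers the explicit chain map above.

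Chaining the two quasi-isomorphisms
\[
\cpx L_R \tns_R \cpx M \;\xrightarrow{\;\simeq\;}\; \opn{Tot}(K) \;\xleftarrow{\;\simeq\;}\; \G(\cpx I) \;\simeq\; \RGm(\cpx M)
\]
then provides the desired isomorphism in $\Db(\Mod R)$. The main technical nuisance I foresee is the last verification: checking that the edge homomorphism of the collapsing spectral sequence agrees with the natural chain map $\G(\cpx I) \to \opn{Tot}(K)$ I constructed by hand, so that the comparison is not merely an abstract isomorphism of cohomology modules but a zig-zag of genuine morphisms in $\Db(\Mod R)$. Once that matching of natural transformations is settled, the remainder is formal and parallels the classical \v Cech computation of local cohomology.
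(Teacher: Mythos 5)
Your argument is correct and is exactly the ``usual spectral sequence argument of double complexes'' that the paper invokes in one line: both you and the paper reduce to the facts, established in the preceding proposition, that $\cpx L_R$ consists of flat modules, that $H^0(\cpx L_R \tns_R N)\cong \G(N)$, and that $H^p(\cpx L_R \tns_R I)=0$ for $p\ge 1$ when $I$ is injective. The final ``technical nuisance'' you flag is in fact harmless: $H^0(\cpx L_R\tns_R I^q)$ is literally the kernel $\Ker(I^q\to L^1_R\tns_R I^q)=\G(I^q)$ as a submodule of $I^q$, so your hand-made chain map $\G(\cpx I)\to\operatorname{Tot}(K)$ is the edge map on the nose.
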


When $M$ is $\zM$-graded, by Lemma \ref{sec:localization}, $T^{-1}_\s R \tns_R M$ is also $\zM$-graded,
and moreover the differentials of $\cpx L_R \tns_R M$ are in $\Lgr R$.
Thus if $M \in \Lgr R$, $H^i(\cpx L_R \tns_R M)$ has a $\zM$-grading induced by $\cpx L_R \tns M$. 
Hence we have the following. 

\begin{cor}
$H^i_\m(M) \in \Lgr R$ for $M \in \Lgr R$.
\end{cor}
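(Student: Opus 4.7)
The plan is to transport the grading from the cohomology of the \v Cech-like complex $\cpx L_R \tns_R M$ to $H^i_\m(M)$ via the isomorphism established in the preceding proposition, so the whole argument reduces to observing that $\cpx L_R \tns_R M$ is naturally a complex in the abelian category $\Lgr R$.

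First, I would unwind each term $T_\s^{-1} R \tns_R M$. For $M \in \Lgr R$ there is a canonical isomorphism $T_\s^{-1} R \tns_R M \cong T_\s^{-1} M$ of $R$-modules, and Lemma~\ref{sec:localization} endows $T_\s^{-1} M$ with a $\zM$-grading, with $x/t^a$ sitting in degree $b-a$ (well-defined precisely when $a$ and the degree $b$ of $x$ lie in a common $\ZZ\M_\s$, and zero otherwise). Taking direct sums over $\s$ of given dimension, each $L^i_R \tns_R M$ belongs to $\Lgr R$. Next I would verify that the differentials $\partial$ are morphisms in $\Lgr R$: they are built out of the natural localization maps $T_\s^{-1} R \to T_\t^{-1} R$ for $\s \le \t$ (which are clearly degree preserving on a fraction $x/t^a$) combined with the scalar factors $\e(\t,\s) \in \mbra{0,\pm 1\}$ coming from the incidence function. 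Hence $\cpx L_R \tns_R M$ is a complex in $\Lgr R$, and since $\Lgr R$ is abelian, its cohomology modules $H^i(\cpx L_R \tns_R M)$ lie in $\Lgr R$.

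Finally I would invoke the preceding proposition, which gives an isomorphism $H^i_\m(M) \cong H^i(\cpx L_R \tns_R M)$ of $R$-modules, and use it to transport the $\zM$-grading back to $H^i_\m(M)$. The only mild subtlety, which I expect to be the main (and still minor) obstacle, is to check that this induced grading is intrinsic, i.e.\ does not depend on the particular identification $\cpx L_R \tns_R M \cong \cpx L_R \tns_R M$ used in the proof of the proposition; but this is immediate because both the complex $\cpx L_R$ and the functor $-\tns_R M$ are defined in $\Lgr R$ and the quasi-isomorphism with $\RGm(M)$ can be upgraded, term by term, to a degree-preserving one. Thus $H^i_\m(M) \in \Lgr R$.
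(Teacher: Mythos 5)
Your argument is correct and is essentially the paper's own proof: Lemma~\ref{sec:localization} makes each $T_\s^{-1}R\tns_R M$ a $\zM$-graded module, the differentials of $\cpx L_R\tns_R M$ are degree preserving, so its cohomology lies in $\Lgr R$, and the grading is transported to $H^i_\m(M)$ via the isomorphism of the preceding proposition. The final worry about independence of the identification is unnecessary, since the corollary only asserts that $H^i_\m(M)$ carries the $\zM$-grading induced in exactly this way.
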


\section{Squarefree Modules}

In this section, we assume that all the toric face rings are cone-wise normal.
Let $(\C,\cell)$ be a conical complex, $\MM$ a monoidal complex, and $R$ the toric face ring of $\MM$.
For $a \in \sM$, there exists a unique cell
$\s \in \cell$ such that $\relint (C_\s) \ni a$. We denote this $\s$ by $\supp(a)$.

\begin{dfn}
An $R$-module $M \in \lgr R$ is said to be \term{squarefree} if it is $\MM$-graded and
the multiplication map $M_a \ni  x \mapsto t^b x \in M_{a+b}$  is an isomorphism 
of $\kk$-vector spaces for all $a,b\in \sM$ with $\supp(a+b) = \supp(a)$.
\end{dfn}

For a monomial ideal $I$ of $R$,  it is a squarefree $R$-module, if and only if  so is $R/I$, 
if and only if $I = \sqrt{I}$. In particular, $\p_\s$ and $R/\p_\s$ are squarefree.     
We denote, by $\Sq R$, the full subcategory of $\lgr R$ consisting of squarefree $R$-modules.
As in the case of affine semigroup rings or Stanley-Reisner rings (see \cite{Y01,Y02}), 
$\Sq R$ has nice properties. 
Since their proofs are also quite similar to these cases, we omit some of them.

\begin{lem}[{cf. \cite{Y01,Y02}}]\label{sec:maps}
Let $M \in \Sq R$. 
Then for $a,b \in \sM$ with $\supp(a) \ge \supp(b)$, there exists a $\kk$-linear map
$\varphi^M_{a,b}:M_b \to M_a$ satisfying the following properties:
\begin{enumerate}
\item $\varphi^M_{a,b}$ is bijective if $\supp(a) = \supp(b)$;
\item $\varphi^M_{a,a} = \idmap$ and $\varphi^M_{a,b}\circ \varphi^M_{b,c} = \varphi^M_{a,c}$
for $a,b,c \in \sM$ with $\supp(c) \le \supp(b) \le \supp(a)$;
\item For $a,a',b,b' \in \sM$ with $\supp(a) \le \supp(a')$ and $\supp(a+b) \le \supp(a'+b')$,
the following diagram
$$
\xymatrix{
M_a \ar[r]^{t^b} \ar[d]_{\varphi^M_{a',a}} & M_{a+b} \ar[d]^{\varphi^M_{a'+b',a+b}} \\
M_{a'} \ar[r]_{t^{b'}} & M_{a'+b'}
}
$$
commutes.
\end{enumerate}
\end{lem}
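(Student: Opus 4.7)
The plan is to construct each $\varphi^M_{a,b}$ as a composition: first multiply by $t^a$, then invert a multiplication by $t^b$ that the squarefree hypothesis forces to be an isomorphism. The key geometric fact I will use repeatedly is that whenever $\supp(b) \le \supp(a)$, we may view $b \in \M_{\supp(a)}$ via the embedding $\i_{\supp(a),\supp(b)}$, so $a + b$ is defined in $\M_{\supp(a)} \subset \sM$, and since $a \in \relint C_{\supp(a)}$ with $b$ lying in the closed cone $C_{\supp(a)}$, a standard convex-geometry observation gives $a + b \in \relint C_{\supp(a)}$, i.e., $\supp(a + b) = \supp(a)$. The squarefree hypothesis then guarantees that the multiplication map $t^b\cdot(-): M_a \to M_{a+b}$ is an isomorphism of $\kk$-vector spaces. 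I would accordingly define
$$
\varphi^M_{a,b} \;:=\; \bigl[\,t^b\cdot(-): M_a \to M_{a+b}\,\bigr]^{-1} \;\circ\; \bigl[\,t^a\cdot(-): M_b \to M_{a+b}\,\bigr].
$$

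Property (1) is immediate, since when $\supp(a) = \supp(b)$ both multiplications are isomorphisms by the squarefree condition. Property $\varphi^M_{a,a} = \idmap$ is trivial. For the composition law in (2), suppose $\supp(c) \le \supp(b) \le \supp(a)$, set $y := \varphi^M_{b,c}(x)$ and $z := \varphi^M_{a,b}(y)$, so that by construction $t^c y = t^b x$ in $M_{b+c}$ and $t^b z = t^a y$ in $M_{a+b}$. Multiplying the first identity by $t^a$ and the second by $t^c$ we obtain $t^{b+c} z = t^{a+c} y = t^{a+b} x$ in $M_{a+b+c}$. Since $\supp(a+c) = \supp(a) = \supp(a+b+c)$, the map $t^b\cdot(-): M_{a+c} \to M_{a+b+c}$ is an isomorphism, and cancelling it in $t^{b+c} z = t^{a+b} x$ yields $t^c z = t^a x$ in $M_{a+c}$, which is exactly $z = \varphi^M_{a,c}(x)$.

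For (3), set $y := \varphi^M_{a',a}(x)$ so that $t^a y = t^{a'} x$ in $M_{a+a'}$; multiplying by $t^{b+b'}$ gives $t^{a+b}(t^{b'} y) = t^{a+b+b'} y = t^{a'+b+b'} x = t^{a+b}\bigl(\varphi^M_{a'+b',a+b}(t^b x)\bigr)$ in $M_{a+a'+b+b'}$, where the last equality unpacks the definition of $\varphi^M_{a'+b',a+b}$ after multiplying its defining identity by $t^{a}$. Because $\supp((a+b)+(a'+b')) = \supp(a'+b')$, the multiplication $t^{a+b}\cdot(-): M_{a'+b'}\to M_{a+a'+b+b'}$ is an isomorphism, and cancelling it yields $t^{b'} y = \varphi^M_{a'+b',a+b}(t^b x)$, which is the desired commutativity.

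The proof is essentially one construction followed by three diagram chases, and the only real obstacle is bookkeeping: at every step where an inverse of a multiplication map is invoked, one must check that the map being inverted is actually an isomorphism. Every such check reduces to the same one-line geometric assertion that $a + b$ lies in $\relint C_\s$ whenever $a \in \relint C_\s$ and $b \in C_\s$, so there is no deeper difficulty beyond tracking supports carefully.
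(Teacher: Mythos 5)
Your construction $\varphi^M_{a,b} = \bigl(t^b\cdot(-)\bigr)^{-1}\circ\bigl(t^a\cdot(-)\bigr)$ is precisely the standard one from \cite{Y01,Y02}, which the paper cites in place of a written proof, and your verifications of (1)--(3) are correct: the support bookkeeping works because all elements involved lie in the monoid of the largest support and $\supp(a+b)=\supp(a)$ whenever $\supp(b)\le\supp(a)$, so every map you invert is an isomorphism by the squarefree condition. Hence the proposal is correct and takes essentially the same approach as the paper (whose proof is deferred to the cited references).
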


Let $\Lambda$ denote the incidence algebra of the regular cell complex $\cell$ over $\kk$
(regarding $\cell$ as a poset by its order $>$).
That is, $\Lambda$ is a finite dimensional associative $\kk$-algebra with basis
$\set{e_{\s,\t}}{\s, \t \in \cell \text{ with } \s \ge \t}$,
and its multiplication is defined by
$$
e_{\s,\t}\cdot e_{\t',\u} =\begin{cases}
e_{\s,\u} & \text{if $ \t = \t' $;} \\
0 & \text{otherwise.}
\end{cases}
$$

We write $e_\s := e_{\s,\s}$ for $\s \in \cell$. 
Each $e_\s$ is idempotent, and moreover $\Lambda e_\s$ is indecomposable as a left $\Lambda$-module.
It is easy to verify that $e_\s \cdot e_\t = 0$  if $\s \not= \t$
and that $1 = \sum_{\s \in \cell} e_\s$. 
Hence $\Lambda$, as a left $\Lambda$-module, can be decomposed as
$\Lambda = \Dsum_{\s \in \cell} \Lambda e_\s$.

Let $\mod \Lambda$ denote the category of finitely generated left $\Lambda$-modules. 
As a $\kk$-vector space, any $M \in \mod \Lambda$ has the decomposition  $M = \Dsum_{\s \in \cell} e_\s M$.  
Henceforth we set $M_\s := e_\s M$.

For each $\s \in \cell$,  we can construct an indecomposable injective object in 
$\mod \Lambda$ as follows;
set
$$
\bar E(\s) := \Dsum_{\t \in \cell, \, \t \le \s}\kk \, \bar e_\t,
$$
where $\bar e_{\t}$'s are basis elements. 
The multiplication on $\bar E(\s)$ from the left defined by
$$
e_{\u, \, \omega} \cdot \bar e_\t = \begin{cases}
\bar e_\u & \text{if $ \t = \omega $ and $\u \le \s $;} \\
0 & \text{otherwise,}
\end{cases}
$$
bring $\bar E(\s)$ a left $\Lambda$-module structure.
The following is well known.

\begin{prop}\label{mod Lambda} 
The category $\mod \Lambda$ is abelian and enough injectives, and any indecomposable injective 
object is isomorphic to $\bar E(\s)$ for some $\s \in \cell$.
\end{prop}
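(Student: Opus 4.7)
The plan is to derive all three assertions from the classical duality theory for finite-dimensional $\kk$-algebras applied to $\Lambda$. The abelianness of $\mod \Lambda$ is immediate since $\Lambda$ is finite-dimensional over $\kk$ and hence left Noetherian. For the remaining two claims the key tool will be the exact contravariant $\kk$-linear duality $D := \Hom_\kk(-,\kk)$ between $\mod \Lambda$ and the analogous category of finitely generated right $\Lambda$-modules.

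To establish enough injectives, I would note that $\Lambda$ is projective as a right $\Lambda$-module, so $D\Lambda$ is injective in $\mod \Lambda$. For any $M \in \mod \Lambda$, pick a surjection $\Lambda^{\oplus n} \epito DM$ of right modules and apply $D$ to obtain an embedding $M \cong DDM \embto (D\Lambda)^{\oplus n}$.

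For the classification of indecomposable injectives, decompose $\Lambda = \Dsum_{\s \in \cell} e_\s \Lambda$ as a right $\Lambda$-module; each summand $e_\s\Lambda$ has $\kk$-basis $\set{e_{\s,\t}}{\t \le \s}$. The hypothesis that $\Lambda e_\s$ is indecomposable as a left module says exactly that $e_\s$ is a primitive idempotent of $\Lambda$; since primitivity is the property that $e_\s \Lambda e_\s$ has no nontrivial idempotents and is therefore side-independent, $e_\s\Lambda$ is also indecomposable as a right module. Consequently $D\Lambda = \Dsum_{\s \in \cell} D(e_\s\Lambda)$ exhibits an injective cogenerator of $\mod \Lambda$ as a direct sum of indecomposable injectives. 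Any indecomposable injective $I \in \mod \Lambda$ embeds into some $(D\Lambda)^{\oplus n}$, the embedding splits by injectivity of $I$, and Krull--Schmidt then identifies $I$ with some $D(e_\s \Lambda)$.

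It remains to identify $D(e_\s\Lambda)$ explicitly with $\bar E(\s)$. Let $\set{f_\t}{\t \le \s}$ be the $\kk$-basis of $D(e_\s\Lambda)$ dual to $\set{e_{\s,\t}}{\t\le\s}$. The direct computation
$$
(e_{\u,\omega}\cdot f_\t)(e_{\s,\t'}) = f_\t(e_{\s,\t'}\cdot e_{\u,\omega}) = \delta_{\t',\u}\cdot f_\t(e_{\s,\omega}) = \delta_{\t',\u}\,\delta_{\omega,\t}
$$
shows that $e_{\u,\omega}\cdot f_\t$ equals $f_\u$ when $\omega=\t$ and $\u \le \s$, and vanishes otherwise, matching the defining $\Lambda$-action on $\bar E(\s)$ under the correspondence $\bar e_\t \leftrightarrow f_\t$. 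The only real obstacle in this plan is bookkeeping: the orientation of the partial order on $\cell$ and the distinction between left and right actions of $\Lambda$ must be tracked consistently, since $\Lambda$ is noncommutative.
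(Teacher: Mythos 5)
Your argument is correct, and it supplies exactly what the paper leaves out: the paper states this proposition without proof ("The following is well known"), so there is no in-text argument to compare against. Your route -- the $\kk$-linear duality $D=\Hom_\kk(-,\kk)$ between finitely generated left and right $\Lambda$-modules, the embedding $M\cong DDM\embto (D\Lambda)^{\dsum n}$ obtained by dualizing a free cover of $DM$, Krull--Schmidt, and the dual-basis identification $D(e_\s\Lambda)\cong \bar E(\s)$ -- is the standard one for finite-dimensional algebras, and your index bookkeeping checks out: $e_\s\Lambda$ has basis $\set{e_{\s,\t}}{\t\le\s}$, and the computation $(e_{\u,\omega}\cdot f_\t)(e_{\s,\t'})=\delta_{\t',\u}\delta_{\omega,\t}$ reproduces precisely the paper's defining action on $\bar E(\s)$, with the condition $\u\le\s$ emerging automatically. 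One small simplification: rather than invoking the paper's assertion that $\Lambda e_\s$ is indecomposable and transferring primitivity across sides, you can see directly that $e_\s\Lambda e_\s=\kk e_\s$ is local, which gives primitivity (and indecomposability of both $e_\s\Lambda$ and $D(e_\s\Lambda)$) in one stroke.
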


As in the case of affine semigroup rings and Stanley-Reisner rings, we have

\begin{prop}[{cf. \cite{Y01,Y02}}]\label{sec:Sq_cat}
There is an equivalence between $\Sq R$ and $\mod \Lambda$.
Hence $\Sq R$ is abelian, and enough injectives. Any indecomposable injective object in $\Sq R$
is isomorphic to $R/\p_\s$ for some $\s \in \cell$.
\end{prop}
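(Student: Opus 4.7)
The plan is to build a pair of quasi-inverse functors $F\colon \Sq R \to \mdL$ and $G\colon \mdL \to \Sq R$; once that is in place, the abelian structure, existence of enough injectives, and classification of indecomposable injectives all transfer from the already-understood side $\mdL$ via Proposition~\ref{mod Lambda}. To set up $F$, I would first use cone-wise normality to pick, for each $\s \in \cell$, an element $a_\s \in \M_\s$ with $\supp(a_\s) = \s$. Given $M \in \Sq R$, set $F(M) := \Dsum_{\s \in \cell} M_{a_\s}$ as a $\kk$-vector space and declare that $e_{\u,\omega} \in \Lambda$ acts as the map $\varphi^M_{a_\u, a_\omega}\colon M_{a_\omega} \to M_{a_\u}$ from Lemma~\ref{sec:maps} when $\u \ge \omega$, and as zero on the other summands. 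Items (1)--(3) of that lemma guarantee that the multiplication relations of $\Lambda$ are satisfied, that the construction is independent of the choice of the $a_\s$'s up to canonical isomorphism, and that a degree-preserving morphism $f\colon M \to N$ induces a $\Lambda$-linear map $F(f)$.

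For the inverse, given $N \in \mdL$, define $G(N) := \Dsum_{a \in \sM} N_{\supp(a)}$ with $G(N)_a := N_{\supp(a)}$. Multiplication by $t^b$ is declared to be the zero map $G(N)_a \to 0$ unless $a$ and $b$ lie in a common $\M_\s$; in the latter case $\supp(a+b) \ge \supp(a)$, and we take multiplication by $t^b$ to be the action of $e_{\supp(a+b),\supp(a)}\colon N_{\supp(a)} \to N_{\supp(a+b)}$. Associativity in $\Lambda$ yields associativity of the $R$-action, and since $e_{\s,\s}$ acts as $\idmap_{N_\s}$, multiplication $t^b\colon G(N)_a \to G(N)_{a+b}$ is an isomorphism whenever $\supp(a+b) = \supp(a)$, so $G(N) \in \Sq R$. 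The natural isomorphism $F \circ G \cong \idmap$ is then immediate from the definitions, since $F(G(N))_\s = G(N)_{a_\s} = N_\s$ with matching $\Lambda$-actions. The step I expect to require the most care is the natural isomorphism $G \circ F \cong \idmap$: the candidate map $M_a \to G(F(M))_a = M_{a_{\supp(a)}}$ is the isomorphism $\varphi^M_{a_{\supp(a)}, a}$ of Lemma~\ref{sec:maps}(1), and checking $R$-linearity with respect to multiplication by $t^b$ amounts to invoking the commutative square of Lemma~\ref{sec:maps}(3), keeping scrupulous track of the supports of $a$, $b$, and $a+b$ in the several cases according to whether the supports increase or remain the same.

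Once the equivalence is established, $\Sq R$ inherits both the abelian structure and enough injectives from $\mdL$. For the classification of indecomposable injectives it remains to compute $G(\bar E(\s))$: by construction, $G(\bar E(\s))_a = \bar E(\s)_{\supp(a)}$, which is $\kk$ precisely when $\supp(a) \le \s$, equivalently when $a \in \M_\s$, and is zero otherwise; the action of $t^b$ matches the prescription defining $\bar E(\s)$ by inspection. This reproduces the $\sM$-graded Hilbert function of $R/\p_\s$, and since the $R$-action transports correctly, $G(\bar E(\s)) \cong R/\p_\s$, completing the identification and the proof.
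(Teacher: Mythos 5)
Your proposal is correct and follows essentially the same route as the paper: the functor $G$ you define (with $G(N)_a := N_{\supp(a)}$ and $t^b$ acting through $e_{\supp(a+b),\supp(a)}$) is exactly the paper's construction, and the remaining claims are transferred from $\mod\Lambda$ via Proposition~\ref{mod Lambda} using $R/\p_\s \leftrightarrow \bar E(\s)$. You merely fill in the quasi-inverse $F$ and the natural isomorphisms via Lemma~\ref{sec:maps}, details the paper delegates to \cite{Y01,Y02} (and note that choosing $a_\s$ with $\supp(a_\s)=\s$ needs only $\RR_{\geq 0}\M_\s = C_\s$, not cone-wise normality).
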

\begin{proof}
First, we will show the category equivalence. 
The object $M \in \Sq R$ corresponding to $N \in \mod \Lambda$ is given as follows. 
Set $M_a := N_{\supp(a)}$ for each $a \in \sM$.  For $a, b \in \sM$ such that $a+b$ exists,  
define the multiplication 
$M_a \ni x \mapsto t^b\cdot x \in M_{a + b}$ by 
$$M_a = N_{\supp(a)} \ni x \longmapsto e_{\supp(a+b), \, \supp(a)} \cdot x \in N_{\supp(a+b)} = M_{a+b}.$$
Then $M$ becomes a squarefree module. See \cite{Y01,Y02} for details 
(though \term{right} $\Lambda$-modules are treated in \cite{Y01,Y02},
there is no essential difference).

Since $R/\p_\s$ corresponds to $\bar{E}(\s)$ in this equivalence, 
the other statements follow from Proposition~\ref{mod Lambda}.  
\end{proof}

Let $\Db(\Sq R)$ be the bounded derived category of $\Sq R$.
We shall define the functor $\DD: \Db(\Sq R) \to \Db(\Sq R)^\op$.
This functor will play an important role in the next section.
First, we choose elements $\bc(\s) \in \sM$ with $\supp(\bc(\s)) = \s$ for each $\s \in \cell$,
and set $\varphi^M_{\s, \t} := \varphi^M_{\bc(\s), \, \bc(\t)}$ for $M \in \Sq R$ and 
$\s, \t \in \cell$ with $\t \le \s$,  
where $\varphi_{\bc(\s), \, \bc(\t)}^M$ is the map given in Lemma \ref{sec:maps}.
To a bounded complex $\cpx M$ of squarefree $R$-modules, 
we assign the complex $\DD(\cpx M)$ defined as follows:
the component of cohomological degree $p$ is
$$
\DD(\cpx M)^p := \Dsum_{i + \dim C_\s = -p} (M^i_{\bc(\s)})^* \tns_\kk R/\p_\s,
$$
where $(-)^*$ denotes the $\kk$-dual, but the ``degree" of $(M^i_{\bc(\s)})^*$ is $0 \in \szM$.
Define $d':\DD(\cpx M)^p \to \DD(\cpx M)^{p+1}$ and $d'':\DD(\cpx M)^p \to \DD(\cpx M)^{p+1}$ by
$$
d'(y \tns r) = \sum_{\substack{\t \le \s,\\ \dim \t= \dim \s - 1}} 
\e(\s,\t) \cdot (\varphi^{M^i}_{\s,\t})^*(y) \tns \nat(r),
\quad d''(y\tns r) = (-1)^p \cdot (\partial^i_{\cpx M})^*(y) \tns r
$$
for $y \in M^i_{\bc(\s)}$ with $i + \dim C_\s = -p$ and $r \in R/\p_\s$. 
Here $\e(\s,\t)$ is an incidence function on $\cell$ and  
$\nat : R/\p_\s \to R/\p_\t$ 
is the natural surjection (note that $\p_\s \subset \p_\t$ if $\t \le \s$).
Clearly, $(\DD(\cpx M), d' + d'')$ forms a bounded complex in $\Sq R$,
and Lemma \ref{sec:maps} guarantees the independence of $\DD(\cpx M)$ 
from the choice of $\bc(\s)$'s.

Let $\Cb(\Sq R)$ be the bounded homotopy category of $\Sq R$. 
Since the  above assignment preserves mapping cones, it 
gives a triangulated functor of $\Cb(\Sq R) \to \Cb(\Sq R)^\op$, 
and an usual argument using spectral sequences indicates that it 
preserves quasi-isomorphisms. Hence it induces
the functor $\Db(\Sq R) \to \Db(\Sq R)^\op$, which is denoted by $\DD$ again.

Up to translation, the functor $\DD$ coincides with the functor 
$\mathbf D: \Db(\mod \Lambda) \to \Db(\mod \Lambda)^\op$ defined in \cite{Y05}, 
through the equivalence $\Sq R \cong \mod \Lambda$ in Proposition \ref{sec:Sq_cat}.
Hence by \cite[Theorem 3.4 (1)]{Y05}, we have the following.

\begin{prop}
The functor $\DD: \Db(\Sq R) \to \Db(\Sq)^\op$ satisfies $\DD \circ \DD \cong \idmap$.  
\end{prop}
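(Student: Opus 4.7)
The plan is to reduce the statement to the known involutivity of the analogous functor $\mathbf D$ on $\Db(\mod \Lambda)$, which is \cite[Theorem~3.4~(1)]{Y05}, via the equivalence $\Sq R \cong \mod \Lambda$ of Proposition~\ref{sec:Sq_cat}. So the real work is to match $\DD$, as defined here in terms of the graded pieces $M_{\bc(\s)}$, incidence signs, and the modules $R/\p_\s$, with the functor $\mathbf D$ defined in \cite{Y05} on the category of $\Lambda$-modules.

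First I would describe the translation dictionary explicitly. Under the equivalence of Proposition~\ref{sec:Sq_cat}, a squarefree module $M$ corresponds to the $\Lambda$-module $N$ with $N_\s = M_{\bc(\s)}$, the maps $\varphi^M_{\s,\t}$ are realized by left multiplication with $e_{\s,\t}$, and the injective objects $R/\p_\s \in \Sq R$ correspond to the injectives $\bar E(\s) \in \mod \Lambda$ (since $R/\p_\s$ is indecomposable injective in $\Sq R$ and its image must be one of the $\bar E(\t)$; comparing supports pins down $\t = \s$). Consequently the component $(M^i_{\bc(\s)})^* \tns_\kk R/\p_\s$ of $\DD(\cpx M)^p$ translates to $(N^i_\s)^* \tns_\kk \bar E(\s)$, which is exactly the kind of summand appearing in the construction of $\mathbf D$ in \cite{Y05}. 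The differential $d' + d''$, split into the horizontal piece built from $(\varphi^{M^i}_{\s,\t})^*$ with the incidence signs $\e(\s,\t)$ and the vertical piece from $(\partial^i_{\cpx M})^*$, likewise matches (up to a cohomological shift) the two differentials of $\mathbf D(\cpx N)$.

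Once this identification is established on $\Cb(\Sq R)$, the isomorphism of functors passes to the derived categories because the paper has already observed that the construction preserves mapping cones and quasi-isomorphisms, so it descends to a triangulated functor on $\Db(\Sq R)$ naturally isomorphic to $\mathbf D$ under the equivalence of derived categories induced by Proposition~\ref{sec:Sq_cat}. The involutivity $\DD \circ \DD \cong \idmap$ then follows at once from \cite[Theorem~3.4~(1)]{Y05}.

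The step I expect to be most delicate is checking that the combinatorial sign conventions and the cohomological degree assignment $i + \dim C_\s = -p$ used here agree, up to an unambiguous shift, with the conventions of \cite{Y05}; the independence from the choice of $\bc(\s)$ (already noted via Lemma~\ref{sec:maps}) is what makes this well-defined, but the bookkeeping with $\e(\s,\t)$ and the sign $(-1)^p$ on $d''$ must be verified to give a bicomplex whose total differential squares to zero in exactly the form of \cite{Y05}. Everything else is formal.
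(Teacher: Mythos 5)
Your proposal is correct and follows exactly the paper's own route: identify $\DD$ (up to translation) with the functor $\mathbf D$ of \cite{Y05} via the equivalence $\Sq R \cong \mod \Lambda$ of Proposition~\ref{sec:Sq_cat}, and then invoke \cite[Theorem~3.4~(1)]{Y05}. The paper states this identification without spelling out the dictionary you describe, so your added detail on matching the summands, the injectives, and the signs is just a more explicit version of the same argument.
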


\section{Dualizing complexes}

We first recall the following useful result due to Sharp (\cite{Sh}).
\begin{thm}[Sharp]\label{sec:sharp}
Let $A$ and $B$ be commutative noetherian rings, and $f: A \to B$ a ring homomorphism.
Assume that $A$ has a dualizing complex $\cpx D_A$ and $B$, regarded as an $A$-module by $f$, 
is finitely generated. Then $\Hom_A(B, \cpx D_A)$ is a dualizing complex of $B$.
\end{thm}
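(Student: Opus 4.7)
The plan is to verify directly the three defining properties of a dualizing complex for $\cpx E := \Hom_A(B, \cpx D_A)$ over $B$. After replacing $\cpx D_A$ by a quasi-isomorphic bounded complex of injective $A$-modules---which is harmless, since $\Hom_A(B,-)$ is exact on injectives and therefore computes $\RHom_A(B,-)$---I must check that $\cpx E$ is a bounded complex of injective $B$-modules, that each cohomology module $H^i(\cpx E)$ is finitely generated over $B$, and that the natural map $B \longto \RHom_B(\cpx E, \cpx E)$ is a quasi-isomorphism.

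For the injectivity of each component $\Hom_A(B, I)$ with $I$ an injective $A$-module, I would invoke the Hom-tensor adjunction, which yields a natural isomorphism of functors
\[
\Hom_B(-, \Hom_A(B, I)) \cong \Hom_A(-, I)
\]
on $\Mod B$; since the right-hand functor is exact, so is the left, whence $\Hom_A(B,I)$ is injective over $B$. Boundedness of $\cpx E$ is immediate once $\cpx D_A$ has been chosen bounded.

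For finite generation of the cohomology, I would note that $H^i(\cpx E) = \Ext^i_A(B, \cpx D_A)$ lies in $\mod A$, because $B \in \mod A$ and $\cpx D_A$ has finitely generated cohomology (these are precisely the hypotheses ensuring $\Ext^i_A$ preserves $\mod A$). Since $B$ is module-finite over $A$ by assumption, any $A$-finite $B$-module is automatically $B$-finite, so $H^i(\cpx E) \in \mod B$.

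The main obstacle, and the step I would treat most carefully, is the biduality. Using the derived version of the same adjunction, I would rewrite
\[
\RHom_B(\cpx E, \cpx E) \;\cong\; \RHom_A(\cpx E, \cpx D_A) \;=\; \RHom_A\bigl(\RHom_A(B, \cpx D_A), \cpx D_A\bigr).
\]
Since $B \in \mod A$, the biduality built into the dualizing complex $\cpx D_A$ yields a quasi-isomorphism $B \longto \RHom_A(\RHom_A(B, \cpx D_A), \cpx D_A)$, and composing with the above identification produces the desired map $B \longto \RHom_B(\cpx E, \cpx E)$. The hard part will be verifying that this composite genuinely coincides with the structural map coming from the $B$-action on $\cpx E$: the adjunction isomorphism lives in the derived category, so one must chase naturality to ensure that the $A$-side biduality morphism is transported to the $B$-side structural map, rather than to some twist of it. Once this compatibility is pinned down, all three axioms are in place and $\cpx E$ is a dualizing complex for $B$.
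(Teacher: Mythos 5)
The paper itself gives no proof of this statement: it is recalled as a known theorem and attributed to Sharp \cite{Sh}, then used as a black box. So your proposal can only be measured against the standard argument, and it reproduces that argument correctly. The three verifications are the right ones: $\Hom_A(B,I)$ is $B$-injective because $\Hom_B(-,\Hom_A(B,I))\cong\Hom_A(-,I)$ as functors on $\Mod B$ and the right-hand side is exact; $H^i(\cpx E)=\Ext_A^i(B,\cpx D_A)$ is a finitely generated $A$-module since $B$ is a finitely generated $A$-module and $\cpx D_A$ has bounded, finitely generated cohomology, and $A$-finiteness implies $B$-finiteness because the $A$-action factors through $f$; and the duality axiom reduces, via the derived adjunction, to biduality over $A$ applied to $B\in\mod A$. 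As for the compatibility you flag at the end, it is a routine check rather than a genuine gap: a morphism in the derived category of $B$-modules is a quasi-isomorphism if and only if it is one after restriction of scalars to $A$, and under restriction the homothety $B\to\RHom_B(\cpx E,\cpx E)$, composed with the adjunction isomorphism $\RHom_B(\cpx E,\cpx E)\cong\RHom_A(\cpx E,\cpx D_A)$, is identified (by naturality of the Hom-tensor adjunction in both variables, applied to the multiplication maps by elements of $B$) with the $A$-biduality morphism $B\to\RHom_A(\RHom_A(B,\cpx D_A),\cpx D_A)$, which is a quasi-isomorphism because $\cpx D_A$ is dualizing. With that observation your outline is complete and agrees with the proof one finds in Sharp's paper and in the standard references.
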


For a commutative ring $A$, we denote, by $E_A(-)$, the injective hull in $\Mod A$.
Let $(\C,\cell)$ be a conical complex, $\MM$ a cone-wise normal monoidal complex supported by $\C$,
and $R := \fring \MM$ its toric face ring. Since $R$ is a finitely generated $\kk$-algebra,
we can take a polynomial ring which surjects onto $R$.
Thus, Proposition \ref{sec:sharp} implies that $R$ has a normalized dualizing complex
$$
\cpx D_R: 0 \to \Dsum_{\substack{\p \in \Spec R,\\ \dim R/\p = d}} E_R(R/\p) \to
\Dsum_{\substack{\p \in \Spec R,\\ \dim R/\p = d-1}} E_R(R/\p) \to \cdots \to
\Dsum_{\substack{\p \in \Spec R,\\ \dim R/\p = 0}} E_R(R/\p) \to 0,
$$
where $d := \dim R = \dim \cell + 1$ and cohomological degrees are given by
$$
D^i_R := \Dsum_{\substack{\p \in \Spec R,\\ \dim R/\p = -i}}\E_R(R/\p).
$$

On the other hand, set $$I^{i}_R := 
\Dsum_{\substack{\s \in \cell \\ \dim R/\p_\s = -i}}R/\p_\s$$
for $i = 0, \dots , d$, and define $I^{-i}_R \to I^{-i+1}_R$
by
$$
x \mapsto \sum_{\substack{\dim \fring{\t} =i-1 \\ \t \leq \s}}\e(\s,\t) \cdot \nat(x)
$$
for $x \in R/\p_\s \subset I^{-i}_R$,
where $\e(\s,\t)$ denotes an incidence function of $\cell$, 
and $\nat$ is the natural surjection $R/\p_\s \to R/\p_\t$. Then
$$
\cpx I_R: 0 \longto I^{-d}_R \longto I^{-d + 1}_R \longto \cdots \longto I^0_R \longto 0
$$
is a complex. 

\begin{thm}\label{sec:ishida}
With the above situation (in particular, $R$ is cone-wise normal), 
$\cpx I_R$ is quasi-isomorphic to the normalized dualizing complex $\cpx \D_R$ of $R$.
\end{thm}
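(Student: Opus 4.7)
The plan is to identify $\cpx I_R$ with the normalized dualizing complex via local duality. The existence of $\cpx D_R$ is granted by Sharp's Theorem~\ref{sec:sharp} (applied to a surjection from a polynomial ring), and the local-duality formula $\cpx D_R \simeq \Hom_R(\RGm(R), E_R(\kk))$ characterizes $\cpx D_R$ by its image under $\RGm$. Thus it suffices to show that $\RGm(\cpx I_R)$ is concentrated in cohomological degree zero with value $E_R(\kk)$, together with finite injective dimension for $\cpx I_R$.

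By Corollary~\ref{derived tensor}, $\RGm(\cpx I_R)$ is computed by the double complex $\cpx L_R \tns_R \cpx I_R$, whose $(p,q)$-component is
$$\Dsum_{\substack{\dim \s = p - 1 \\ \dim \t = -q - 1}} T_\s^{-1}(R/\p_\t).$$
The summand for a pair $(\s, \t)$ vanishes unless $\s \le \t$, for if $a \in \M_\s \setminus \M_\t$ then $t^a \in T_\s$ is zero in $R/\p_\t = \kk[\M_\t]$. For fixed $\t$, the row is the C\v ech complex computing $\RGm(\kk[\M_\t])$; cone-wise normality makes $\kk[\M_\t]$ Cohen-Macaulay of dimension $\dim \t + 1$, so its row cohomology is concentrated in column $p = \dim \t + 1$, and since $q = -\dim \t - 1$, every non-vanishing $E_1$-term sits on the anti-diagonal $p + q = 0$. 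All further differentials $d_r$ shift $q$ strictly, so no nonzero source can hit a nonzero target; hence $E_\infty = E_1$ and $\RGm(\cpx I_R)$ is concentrated in cohomological degree zero, with associated graded $\Dsum_\t H^{\dim \t + 1}_\m(\kk[\M_\t])$ for the filtration coming from the spectral sequence.

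It remains to identify this filtered $R$-module with $E_R(\kk)$ and to verify finite injective dimension. I would attack the identification $\zM$-component by $\zM$-component, using the localization machinery of Lemma~\ref{sec:localization}: on each graded strand both sides become finite-dimensional $\kk$-complexes with differentials induced from the incidence function $\e$ of $\cell$, and a direct combinatorial calculation expresses each as reduced cellular cohomology of a subcomplex of $\cell$ determined by the multidegree. For embedded toric face rings this recovers \cite[Theorem~5.1]{IR}, but the point of the present theorem is that it holds without any ambient $\ZZ^n$-grading. The main obstacle is precisely this last step: verifying on the nose (rather than up to abstract isomorphism) that the filtered $R$-module $H^0(\RGm(\cpx I_R))$ is $E_R(\kk)$—the $\zM$-graded framework of Section~\ref{sec:Cech_cpx_and_local_cohom} is what replaces the ambient grading and makes the bookkeeping tractable, while finite injective dimension follows automatically once the quasi-isomorphism $\cpx I_R \simeq \cpx D_R$ has been established.
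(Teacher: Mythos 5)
Your spectral sequence step is correct: since each $\kk[\M_\t]$ is Cohen--Macaulay of dimension $\dim\t+1$, the double complex $\cpx L_R\tns_R\cpx I_R$ degenerates and $\RGm(\cpx I_R)$ is concentrated in cohomological degree $0$ (Corollary~\ref{derived tensor} justifies using $\cpx L_R$). The trouble lies before and after this step. First, the reduction itself is not valid as stated: $R$ is neither local nor, in general, graded, and the absence of a grading is the whole point of the theorem. The criterion ``finite injective dimension, coherent cohomology, and $\RGm(-)\cong E_R(\kk)$ in degree $0$'' is a condition at the single closed point $\m$; the functor $\RGm$ is blind to what a complex does away from $V(\m)$, so at best it could identify $\cpx I_R$ with $\cpx D_R$ after localization (indeed completion) at $\m$, not over all of $\Spec R$, and even the displayed formula $\cpx D_R\simeq\Hom_R(\RGm(R),E_R(\kk))$ only returns a dualizing complex of the completion. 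In the embedded case one globalizes exactly this kind of argument by ${}^*$local duality with respect to the ambient $\ZZ^n$-grading --- that is Ichim--R\"omer's proof --- but here there is no group grading to appeal to, and the $\zM$-decomposition of the paper is not a grading by a group. Second, your handling of finite injective dimension is circular: it is listed among the hypotheses needed for the reduction and then asserted to ``follow automatically once the quasi-isomorphism has been established.''

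Third, the step you defer --- identifying $H^0(\RGm(\cpx I_R))$ with $E_R(\kk)$ ``on the nose'' --- is not bookkeeping but the entire difficulty. The graded computation naturally produces graded Matlis-dual data (your associated graded $\Dsum_\t H^{\dim\t+1}_\m(\kk[\M_\t])$ consists of graded duals of the canonical modules $\W_{\kk[\M_\t]}$), whereas $E_R(\kk)$ is the honest ungraded injective hull, a completion-like object carrying no $\zM$-structure; comparing graded and ungraded injective objects without an ambient grading is precisely where no off-the-shelf argument exists. The paper's proof is organized so as never to touch $E_R(\kk)$ directly: Proposition~\ref{sec:subcpx} builds a canonical embedding $\cpx I_R\subset\cpx D_R$ cone by cone, via $\kk[\s]\cong\Ext^{-\dim\s-1}_R(\W_{\kk[\s]},\cpx D_R)$ together with a compatibility and incidence-function check, and Proposition~\ref{sec:DD} (using the squarefree duality $\DD$ with $\DD(R)=\cpx I_R$, the normal semigroup ring case from \cite{Y06}, and \cite[Proposition~7.1]{Ha}) shows that this inclusion is a quasi-isomorphism. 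As it stands, your proposal proves that $\cpx I_R$ has the correct local cohomology at $\m$, which is necessary but falls well short of the theorem.
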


For the embedded case, Theorem \ref{sec:ishida} was already shown by Ichim and R\"omer \cite{IR}, 
using the natural $\ZZ^n$-graded structure.
However, in the general case, we cannot apply the same argument.

\begin{prop}\label{sec:subcpx}
With the hypothesis in Theorem \ref{sec:ishida}, $\cpx I_R$ is a subcomplex of $\cpx \D_R$.
\end{prop}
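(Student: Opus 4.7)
The plan is to construct a termwise injection $j^\bullet: \cpx I_R \embto \cpx D_R$ out of the canonical inclusions of each summand $R/\p_\s$ of $I^{-i}_R$ into its injective hull $\E_R(R/\p_\s)$, and then verify the chain-map relation matrix-entry by matrix-entry. For each $\s \in \cell$, cone-wise normality makes $R/\p_\s \cong \fring{\M_\s}$ a normal domain, so $\p_\s$ is its unique associated prime as an $R$-module and the essential inclusion $R/\p_\s \embto \E_R(R/\p_\s)$ exists canonically; taking direct sums over cells of dimension $i-1$ yields an injection $j^{-i}: I^{-i}_R \embto D^{-i}_R$ for each $i$, which is automatically termwise injective.

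The heart of the proof will be showing $d_{\cpx D_R} \circ j^{-i} = j^{-i+1} \circ d_{\cpx I_R}$. Decomposed entry by entry, this reduces to the claim that, for every covering pair $\t < \s$ with $\dim \t = \dim \s - 1$, the component $\E_R(R/\p_\s) \to \E_R(R/\p_\t)$ of $d_{\cpx D_R}$ sends the submodule $R/\p_\s$ into $R/\p_\t$ and acts there as $\e(\s,\t)$ times the natural surjection $R/\p_\s \epito R/\p_\t$, while all other matrix entries out of $\E_R(R/\p_\s)$ vanish on $R/\p_\s$. The latter vanishing is a support statement: elements of $R/\p_\s$ are annihilated by $\p_\s$, whereas any $\E_R(R/\p_{\s'})$ with $\p_{\s'} \not\supset \p_\s$ contains no nonzero element killed by $\p_\s$. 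For the covering entry, localizing at $\p_\t$ identifies it with a differential in the minimal injective resolution of the normalized dualizing complex of $R_{\p_\t}$; cone-wise normality makes the height-one localization $(R/\p_\s)_{\p_\t/\p_\s}$ a DVR, which pins this differential down up to a unit of $R_{\p_\t}$.

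The main obstacle is that $\cpx D_R$ as a minimal injective complex is determined only up to isomorphism, so its individual matrix entries are well-defined only modulo automorphisms of each $\E_R(R/\p_\s)$; there is no a priori canonical candidate to compare against $d_{\cpx I_R}$. I would resolve this by fixing the model of $\cpx D_R$ produced by Sharp's theorem (Theorem~\ref{sec:sharp}) applied to the presentation $S \epito R$ by a polynomial ring $S$ together with an explicit normalized dualizing complex of $S$; the local analysis of the previous paragraph then determines each covering entry up to a scalar, and one shows these scalars can be absorbed into a rescaling of the embeddings $R/\p_\s \embto \E_R(R/\p_\s)$ by units. The cocycle identity satisfied by the incidence function $\e$ is precisely what guarantees that such local rescalings fit together consistently, yielding $\cpx I_R$ as a genuine subcomplex of the chosen model of $\cpx D_R$.
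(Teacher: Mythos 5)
Your overall strategy---embed each $R/\p_\s$ into the relevant summand of $\cpx D_R$ termwise and then check the matrix entries of the differential---is the right target, but the two steps you rest it on fail as stated. First, the ``canonical'' essential inclusions $R/\p_\s \embto E_R(R/\p_\s)$ are the wrong embeddings, and the claim that the covering entry $E_R(R/\p_\s)\to E_R(R/\p_\t)$ restricts on them to $\pm$(natural surjection), with only a unit ambiguity, is false. Already for $R=\kk[x]$ (one $0$-cell and $\none$), the standard model has $D_R^{-1}=\kk(x)$, the summand $E_R(R/(x))\cong \kk(x)/\kk[x]_{(x)}$ in $D_R^{0}$, and the entry is the quotient map: it kills the standard copy $\kk[x]\subset\kk(x)$ outright, so the square against the natural surjection $\kk[x]\epito\kk$ cannot commute. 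The correct copy is $x^{-1}\kk[x]$, i.e.\ the naive copy twisted by the inverse of the canonical module $\W_{R/\p_\s}=(x)$ --- a fractional, non-unit twist. This is precisely why the paper does not use essential inclusions at all: it pins down the copy of $R/\p_\s$ inside $D_R^{-d_\s}$ canonically as $\Ext^{-d_\s}_R(\W_{R/\p_\s},\cpx D_R)=\{x\in D_R^{-d_\s} \mid \p_\s x=0,\ \partial(J_\s x)=0\}$, where $J_\s$ is the set of interior monomials; cone-wise normality enters exactly here, to identify $\W_{R/\p_\s}$ with the interior ideal. With your choice of embeddings, the localization-at-$\p_\t$ computation you outline produces zero entries, not a unit discrepancy.

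Second, even after the correct twist, the real difficulty is simultaneous consistency: for a fixed $\t$, the copies of $R/\p_\t$ selected through the various $\s>\t$ must be the \emph{same} submodule of $E_R(R/\p_\t)$, and the ambiguity to be controlled is multiplication by an arbitrary nonzero element of the fraction field of $R/\p_\t$ (this is how automorphisms of $E_R(R/\p_\t)$, equivalently different embeddings of $R/\p_\t$, move the copy), not by a unit; the cocycle identity of the incidence function only governs signs $\pm 1$ and cannot supply this. The paper resolves it by routing each embedding through the $\ZZ^{d_\s}$-graded normalized dualizing complex of the normal semigroup ring $\kk[\M_\s]$, whose $\M_\s$-graded positive part is literally the Ishida-type complex with entries $R/\p_\t$, and by checking that the resulting embedding of $R/\p_\t$ coincides with the canonical one above, hence is independent of $\s$ and of the non-canonical inclusion of that graded complex into $\cpx D_R$. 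Only at the very end, where the differential of $\cpx D_R$ acts on the chosen generators via some incidence function $\e'$, does a rescaling of generators occur, to replace $\e'$ by $\e$ --- a $\pm 1$ adjustment, which is the only place where your ``absorb scalars via the cocycle identity'' idea legitimately applies.
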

\begin{proof}
We shall go through some steps.

\medskip

\noindent{\it Step} 1. Some observations.

\medskip

For $\s \in \cell$, we set $\fring \s := R/\p_\s \cong \fring{\M_\s}$ and 
$d_\s := \dim C_\s = \dim \fring\s = \dim \s +1$.  
Note that $$\cpx D_\s := \Hom_R(\fring\s, \cpx D_R)$$ is a normalized dualizing complex of $\fring\s$
by Proposition \ref{sec:sharp}. Since $\fring\s$ is $\ZZ^{d_\s}$-graded, 
we also have the $\ZZ^{d_\s}$-graded version of a normalized dualizing complex
$$
\cpx{\D*_\s}: 0 \to \Dsum_{\substack{\t \leq \s,\\ \dim \fring \t = d_\s}} \E*_{\fring\s}(\fring\t)
\to \Dsum_{\substack{\t \leq \s,\\ \dim \fring \t = d_\s - 1}} \E*_{\fring\s}(\fring\t)
\to \cdots \to \E*_{\fring\s}(\kk) \to 0,
$$
where $\E*_{\fring\s}(-)$ denotes the injective hull in the category of 
$\ZZ^{d_\s}$-graded ${\fring\s}$-modules,  and
cohomological degrees are given by the same way as $\cpx D_R$. 

It is easy to see that the \term{positive part} 
$$
\bigoplus_{a \in \M_\s} [{\E*}_{\fring \s}(\fring \t)]_a
$$ 
of ${\E*}_{\fring \s}(\fring \t)$ 
is isomorphic to $\kk[\t]$.  Set 
\begin{equation}\label{eq:positive}
\cpx I_\s := \bigoplus_{a \in \M_\s} [\cpx {\D*_\s}]_a \subset \cpx {\D*_\s}.
\end{equation}
Clearly, $\cpx I_\s$ is a complex with  
\begin{equation}\label{eq:positive2}
I^i_\s: = \Dsum_{\substack{\t \le \s,\\ \dim \fring{\t} = -i}} \fring\t. 
\end{equation}

As is well-known, $\cpx{D_\s}$ is an injective resolution of $\cpx {\D*_\s}$ in the category 
$\Mod {\fring \s}$, and the latter can be seen as a subcomplex of  the former  
in a non-canonical way. By the construction,  $\cpx I_\s$ is a subcomplex of  $\cpx{\D*_\s}$, and 
$\cpx D_\s$ is a subcomplex of $\cpx D_R$. 
Combining them, we have an embedding  $\cpx I_\s \embto \cpx D_R$. 
Thus the problem is the compatibility of the embeddings $\cpx I_\s \embto \cpx D_R$ and $\cpx I_\t \embto \cpx D_R$ for $\s,\t \in \C$.

\medskip

\noindent{\it Step} 2. Canonical embedding $\fring \s \embto \cpx \D_R$.

\medskip

For $\s \in \cell$, let $\W_{\fring\s}$ be the canonical module of $\fring\s$.
By our hypothesis that $\MM$ is cone-wise normal, we see that $\W_{\fring\s}$ is just 
the  ideal generated by $\set{t^a \in \fring \s}{a \in \relint (C_\s)\cap \M_\s}$ 
(cf. \cite[Theorem 6.3.5]{BH}).
Whence we have the exact sequence:
$$
0 \longto \W_{\fring \s} \longto \fring \s \longto \fring\s /\W_{\fring\s} \longto 0.
$$
Since $\Hom_R(\fring\s /\W_{\fring\s},E_R(\fring \s)) = 0$,
applying $\Hom_R(-,E_R(\fring\s))$ to the above exact sequence yields the canonical isomorphism
$$
\Hom_R(\fring \s,E_R(\fring\s)) \cong \Hom_R(\W_{\fring \s}, E_R(\fring\s)),
$$
and thus the canonical embedding
\begin{align}\label{eq:inc1}
\Hom_R(\W_{\fring \s}, E_R(\fring\s)) \cong \set{x \in \E_R(\fring\s)}{\p_\s x = 0} \subset \E_R(\fring\s).
\end{align}
Since we have 
\begin{align*}
\Hom_R(\W_{\fring\s}, \D^{-d_\s}_R) = \Dsum_{\substack{\p \in \Spec R,\\ \dim R/\p = d_\s}}\Hom_R(\W_{\fring\s}, \E_R(R/\p))
                                 = \Hom_R(\W_{\fring\s}, \E_R(\fring\s)),
\end{align*}
in conjunction with \eqref{eq:inc1}, we obtain the canonical embedding
$$
\Hom_R(\W_{\fring \s}, \D^{-d_\s}_R) \subset \E_R(\fring\s) \subset \D^{-d_\s}_R.
$$

Since $\Hom_R(\W_{\fring\s}, \D^{-d_\s-1}_R) = 0$, it follows that
\begin{eqnarray*}
\Ext^{-d_\s}_R(\W_{\fring\s}, \cpx D_R) 
&=& \Ker( \, \Hom_R(\W_{\fring\s}, \D^{-d_\s}_R) \to \Hom_R(\W_{\fring\s}, D^{-d_\s+1}_R) \, )\\
&=& \{ \, x \in D_R^{-d_\s} \mid  \text{$\p_\s x =0$ and $\partial(J_\s x)=0$ } \}, 
\end{eqnarray*}
where $J_\s := \{ \,  t^a  \mid a \in \relint (C_\s)\cap \M_\s \, \}$ and 
$\partial: D^{-d_\s} \to D^{-d_\s +1}$ is the differential map.  
Consequently, we have
\begin{align}\label{eq:emb1}
\fring\s \cong \Ext^{-d_\s}_R(\W_{\fring\s}, \cpx D_R) \subset \D^{-d_\s}_R
\end{align}
canonically.

Using this, we have a canonical injection
\begin{equation}\label{eq:emb1.5} 
I_R^i = \Dsum_{\substack{\s \in \cell \\ \dim \fring{\s} = -i}} \fring\s \embto D_R^i
\end{equation}
for each $i$. 
\medskip

\noindent{\it Step} 3. Compatibility.

\medskip

For $\s,\t \in \cell$ with $\t \leq \s$, set 
$$\uExt_{\fring\s}^i(\W_{\fring\t},  \cpx{\D*}_\s) := 
H^i( \, \Hom^\bullet_{\fring \s}(\W_{\fring\t}, \cpx{\D*}_\s) \,).$$  
This module has a $\ZZ^{d_\s}$-grading, since so does  $\W_{\fring\t}$.  
Applying the same argument as in Step 2 
(replacing $R$ by $\fring\s$ and $\cpx D_R$ by $\cpx{\D*}_\s$), we have a canonical embedding  
which is the first injection of the sequence 
\begin{align}\label{eq:emb2}
\fring\t \cong \uExt^{-d_\t}_{\fring\s}(\W_{\fring\t}, \cpx{\D*}_\s) 
\embto \D*^{-d_\t}_\s \embto \D^{-d_\t}_R. 
\end{align}
Here the last injection is not canonical. 
Since the inclusions $\cpx {\D*}_\s \embto \cpx D_\s \embto \cpx D_R$  
give the isomorphisms 
$$\uExt_{\fring\s}^{-d_\t}(\W_{\fring\t}, \cpx{\D*}_\s) 
\cong \Ext_{\fring \s}^{-d_\t}(\W_{\fring\t}, \cpx D_\s)
\cong \Ext_R^{-d_\t}(\W_{\fring\t}, \cpx D_R),$$ 
the embedding $\fring\t \embto D_R^{-d_\t}$ given in \eqref{eq:emb2}
coincides with the one given in Step~2.  
(So the image of  \eqref{eq:emb2} does not depend on 
the choice of an injection $\D*^{-d_\t}_\s \embto \D^{-d_\t}_R$.)

It is easy to see that the inclusion \eqref{eq:positive} (see also \eqref{eq:positive2}) 
is same as the one given by \eqref{eq:emb2}. 
Therefore, through any $\cpx{\D*_\s} \embto \cpx D_R$,  
the embeddings of \eqref{eq:positive} and \eqref{eq:emb1.5} are compatible. 
So under this embedding, we have $I_\s^i \subset I_R^i \subset D_R^i$.  
Since $\cpx I_\s$ is a subcomplex of $\cpx D_R$ for all $\s \in \cell$, 
$\bigoplus_{i \in \ZZ} I_R^i$ forms a subcomplex of $\cpx D_R$. 

\medskip

We can take a generator $1_\s \in \fring \s 
\subset I_R^{-d_\s} \subset  D_R^{-d_\s}$ for each $\s \in \cell$ satisfying
$$\partial_{\cpx D_R}(1_\s) = \sum \e  '(\s, \t) \cdot 1_\t$$ 
for some incidence function $\e '$ on $\cell$. 
Recall that we have fixed an incidence function $\e$ to define the differential of $\cpx I_R$. 
While $\e$ and $\e '$ do not coincide in general, their difference is well-regulated  
(cf. \cite[p.265]{BH}).  So, after a suitable change of  $\{1_\s \}_{\s \in \cell}$, 
we have 
$$\partial_{\cpx D_R}(1_\s) = \sum \e  (\s, \t) \cdot 1_\t.$$ 
Therefore we conclude that $\cpx I_R$ is a subcomplex of $\cpx D_R$ as is desired.
\end{proof}

When $R$ is a normal semigroup ring, the second author showed in \cite[Lemma~3.8]{Y06} that there is a natural isomorphism between
$\DD$ and $\RHom(-,\cpx \D_R)$. The next result generalizes this to toric face rings.

\begin{prop}\label{sec:DD}
There is the following commutative diagram;
$$
\xymatrix{
\Db(\Sq R) \ar[r]^{\for} \ar[d]_{\DD} & \Db(\Mod R) \ar[d]^{\RHom(-,\cpx \D_R)} \\
\Db(\Sq R)^{\op} \ar[r]_{\for} & \Db(\Mod R)^{\op},
}
$$
where $\for$ is the functor induced by the forgetful functor $\Sq R \to \Mod R$.
In particular, we have $\DD(\cpx M) \cong \RHom_R(\cpx M, \cpx D_R)$ in 
$\Db(\Mod R)$ for any $\cpx M \in \Db(\Sq R)$, and hence 
$\Ext^i_R(\cpx M,\cpx D_R)$ has a $\zM$-grading induced by $\DD(\cpx M)$.
\end{prop}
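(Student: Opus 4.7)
The plan is to construct a natural chain map
\[
\eta_{\cpx M}:\DD(\cpx M) \longto \Hom^\bullet_R(\cpx M, \cpx D_R)
\]
for each $\cpx M \in \Db(\Sq R)$ and to show that it is a quasi-isomorphism. Since $\cpx D_R$ is a complex of injectives, the right-hand side represents $\RHom_R(\cpx M, \cpx D_R)$, so such an $\eta$ realises the commutativity of the diagram in $\Db(\Mod R)$. The key ingredients are the embedding $\cpx I_R \embto \cpx D_R$ from Proposition~\ref{sec:subcpx}, which is a quasi-isomorphism by Theorem~\ref{sec:ishida}, together with the explicit description of $\DD$ in terms of squarefree data.

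For the construction, fix $M \in \Sq R$ and $\s \in \cell$; Lemma~\ref{sec:maps} gives a $\kk$-linear isomorphism $\varphi^M_{\bc(\s),a}:M_a \to M_{\bc(\s)}$ whenever $\supp(a) \le \s$. I would then define
\[
M_{\bc(\s)}^* \tns_\kk R/\p_\s \longto \Hom_R(M, R/\p_\s),
\qquad
y \tns r \longmapsto \bigl(\, m \in M_a \mapsto y(\varphi^M_{\bc(\s),a}(m))\cdot t^a r \,\bigr),
\]
with the convention that $m$ is sent to $0$ when $\supp(a) \not\le \s$; the compatibility diagram of Lemma~\ref{sec:maps}(3) guarantees $R$-linearity. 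Assembling over all $\s \in \cell$ and over the degrees of $\cpx M$ produces a chain map $\DD(\cpx M) \to \Hom^\bullet_R(\cpx M, \cpx I_R)$, whose differentials coincide with those on the right-hand side because both are built from the same incidence function $\e$ on $\cell$ and the natural surjections $R/\p_\s \epito R/\p_\t$ for $\t \le \s$. Composing with the embedding of Proposition~\ref{sec:subcpx} then yields $\eta_{\cpx M}$; functoriality in $\cpx M$ is clear from the construction.

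To show that $\eta_{\cpx M}$ is a quasi-isomorphism, I would reduce by a standard dévissage to the case $\cpx M = R/\p_\s$ for a single $\s \in \cell$: by Proposition~\ref{sec:Sq_cat} the $R/\p_\s$ are the indecomposable injectives of $\Sq R$, so every object of $\Db(\Sq R)$ is quasi-isomorphic to a bounded complex of sums of these, and both sides of the proposed isomorphism are triangulated functors preserving finite direct sums. A direct inspection of the definition of $\DD$ shows $\DD(R/\p_\s) = \cpx I_{\kk[\s]}$, the Ishida complex of the normal semigroup ring $\kk[\s]$ with its standard incidence differential. On the other hand, Sharp's Theorem~\ref{sec:sharp} identifies $\RHom_R(R/\p_\s, \cpx D_R)$ with a normalized dualizing complex of $\kk[\s]$, which is in turn quasi-isomorphic to $\cpx I_{\kk[\s]}$ by Theorem~\ref{sec:ishida} applied to $\kk[\s]$ itself, viewed as a cone-wise normal toric face ring whose underlying cell complex is the face poset of $C_\s$.

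The main obstacle I anticipate is the compatibility bookkeeping: one must verify that the embedding $\cpx I_R \embto \cpx D_R$ from Proposition~\ref{sec:subcpx}, after applying $\Hom_R(R/\p_\s, -)$, restricts to the analogous embedding $\cpx I_{\kk[\s]} \embto \cpx D_{\kk[\s]}$, so that $\eta_{R/\p_\s}$ realises the identification of both sides with $\cpx I_{\kk[\s]}$. Once this is settled, the final $\zM$-grading on $\Ext^i_R(\cpx M, \cpx D_R)$ is transported from the manifest $\zM$-grading on $\DD(\cpx M)$ along the derived-category isomorphism.
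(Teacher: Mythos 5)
Your overall strategy is the one the paper itself uses: build a natural transformation out of the chain-level embedding $\cpx I_R \embto \cpx D_R$ of Proposition~\ref{sec:subcpx}, identify $\Db(\Sq R)$ with the homotopy category of complexes of the injectives $R/\p_\s \cong \kk[\s]$ of $\Sq R$, check the map on each $\kk[\s]$, and finish by a way-out/d\'evissage argument (the paper quotes \cite[Proposition~7.1]{Ha}). The only cosmetic difference is that you define the transformation by an explicit evaluation formula on arbitrary squarefree modules, whereas the paper simply observes that $\DD(\cpx J) = \Hom^\bullet_R(\cpx J, \cpx I_R)$ on the nose for complexes $\cpx J$ of injective squarefree modules and post-composes with the inclusion $\cpx I_R \subset \cpx D_R$; your formula is fine, and Lemma~\ref{sec:maps}(3) with $b'=0$ indeed gives the required $R$-linearity.

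There is, however, a genuine logical defect in your base case: you invoke Theorem~\ref{sec:ishida}, but in this paper Theorem~\ref{sec:ishida} is \emph{deduced from} the proposition you are proving (its proof is literally $\cpx I_R = \DD(R) \cong \RHom_R(R,\cpx D_R) \cong \cpx D_R$), so the citation is circular; nor can the circle be broken by induction over cells, since the d\'evissage for the toric face ring $\kk[\s]$ again requires the case of the top cell $\s$ itself, i.e.\ exactly the normal affine semigroup ring statement. That statement must be imported from outside: the paper uses \cite[Lemma~3.8]{Y06} (a \emph{natural} isomorphism $\DD \cong \RHom_{\kk[\s]}(-,\cpx D_\s)$ over a normal semigroup ring); alternatively one could cite Ishida \cite{I} or the graded case in \cite{IR}. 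Two related remarks: your opening appeal to Theorem~\ref{sec:ishida} for the full ring $R$ is unnecessary, since only the embedding of Proposition~\ref{sec:subcpx}, not its being a quasi-isomorphism, enters the construction of $\eta$; and knowing that $\cpx I_{\kk[\s]}$ and $\cpx D_\s$ are abstractly quasi-isomorphic does not yet show that your particular map $\eta_{R/\p_\s}$ is a quasi-isomorphism -- you need the natural-isomorphism form of the single-cone result (or a direct check that the canonical embedding of Steps 2--3 of Proposition~\ref{sec:subcpx} induces an isomorphism on cohomology), which is precisely the compatibility you flagged but left unresolved.
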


\begin{proof}
Let $\InjSq$ be the full subcategory of $\Sq R$ consisting of all injective objects, that is, 
finite direct sums of $\kk[\s]$ for various $\s \in \cell$. 
As is well-known (cf. \cite[Proposition~4.7]{Ha}), the bounded homotopy category $\Cb(\InjSq)$ is equivalent to 
$\Db(\Sq R)$.  
It is easy to see that $\DD(\kk[\s]) = \Hom^\bullet_R(\kk[\s], \cpx I_R)$. 
Moreover, $\DD(\cpx J) = \Hom^\bullet_R(\cpx J, \cpx I_R)$ for all $\cpx J \in 
\Cb(\InjSq)$. Since $\cpx I_R$ is a subcomplex of $\cpx D_R$ as shown in  
Proposition~\ref{sec:subcpx}, we have a chain map 
$\Hom^\bullet_R(\cpx J, \cpx I_R) \to  \Hom^\bullet_R(\cpx J, \cpx D_R)$. 
This map induces a natural transformation $\Psi: \for \circ \DD \to \RHom_R(-, \cpx D_R) \circ \for$. 
If $M \in \Sq R$ is a $\kk[\s]$-module, then 
$\DD(M) \cong \RHom_{\kk[\s]}(M, \cpx D_\s) \cong \RHom_R(M, \cpx D_R)$ 
by \cite[Lemma~3.8]{Y06}.  In particular, 
$\Psi(\kk[\s])$ is isomorphism for all $\sigma \in \cell$.  
Hence applying \cite[Proposition 7.1]{Ha}, we see that $\Psi(\cpx M)$ 
is an isomorphism for all $\cpx M \in \Db(\Sq R)$.
\end{proof}

The most part of the proof of Theorem \ref{sec:ishida} has done now.

\begingroup
\renewcommand{\proofname}{Proof of Theorem \ref{sec:ishida}}
\begin{proof}
Since $R \in \Sq R$, we have
$
I_R = \DD(R) \cong \RHom(R, \cpx D_R) \cong \cpx D_R
$
by Proposition \ref{sec:DD}.
\end{proof}
\endgroup

Let $M \in \Lgr R$.  
We can construct the {\it graded Matlis dual} $M^\vee \in \Lgr R$ of $M$ as follows: 
For each $a \in \szM$, $(M^\vee)_a$ is the $\kk$-dual space of $M_{-a}$.  
For $a,b \in \szM$ such that  $a+b$ exists (that is, $a,b,a+b \in \M_\s$ for some 
$\s \in \cell$), the multiplication map $(M^\vee)_a \ni x \longmapsto  t^b x \in (M^\vee)_{a+b}$ is the $\kk$-dual of 
$M_{-a-b} \ni y \longmapsto  t^b y \in M_{-a}$. Otherwise, $t^bx=0$ for all $x \in (M^\vee)_a$.  

It is obvious that $M^\vee$ is actually a $\ZZ\MM$-graded $R$-module. 
If $\dim_\kk M_a < \infty$ for all $a \in \szM$ (e.g. $M \in \lgr R$), then $M^{\vee\vee} \cong M$. 
Clearly, $(-)^\vee$ defines an exact contravariant functor from $\Lgr R$ to itself. 
We can extend this functor to the functors $\Cb(\Lgr R) \to \Cb(\Lgr R)^\op$ and 
$\Db(\Lgr R) \to \Db(\Lgr R)^\op$. We simply denote them by $(-)^\vee$. 

\begin{prop}\label{Local Duality}
As functors from $\Db(\Sq R)$ to $\Db(\Lgr R)$, we have 
$\RGm \cong (-)^\vee \circ \for \circ \DD$,
where $\for :\Db(\Sq R) \to \Db(\Lgr R)$ is induced by the forgetful functor $\Sq R \to \Lgr R$.
In particular, if $M \in \Sq R$, then 
$H_\m^i(M) \cong \Ext^{-i}_R(M, \cpx D_R)^\vee$ as $\ZZ\MM$-graded modules for all $i$.  
\end{prop}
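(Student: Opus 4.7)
The strategy is to combine the identifications already in hand and reduce cell-by-cell to classical graded local duality on the normal affine semigroup rings $\fring \s$. By Corollary~\ref{derived tensor}, $\RGm \cong \cpx L_R \tns_R -$ on $\Db(\Mod R)$; by Proposition~\ref{sec:DD} together with Theorem~\ref{sec:ishida}, $\DD(-) \cong \RHom_R(-, \cpx I_R)$. Thus it suffices to produce a natural isomorphism
\[
\cpx L_R \tns_R M \cong \RHom_R(M, \cpx I_R)^\vee
\]
in $\Db(\Lgr R)$ for $M \in \Db(\Sq R)$, where $(-)^\vee$ denotes the $\szM$-graded Matlis dual. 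By Lemma~\ref{sec:localization}, $\cpx L_R$ consists of $\zM$-graded $R$-modules, so $\cpx L_R \tns_R M$ lives naturally in $\Db(\Lgr R)$; likewise $\RHom_R(M, \cpx I_R)^\vee$ lives in $\Db(\Lgr R)$ via the $\zM$-grading on $\DD(M)$ supplied by Proposition~\ref{sec:DD}.

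Set $F := \cpx L_R \tns_R -$ and $G := \RHom_R(-, \cpx I_R)^\vee$, both triangulated functors $\Db(\Sq R) \to \Db(\Lgr R)$. Since $\Sq R$ has enough injectives with indecomposable injectives $\fring \s = R/\p_\s$ (Proposition~\ref{sec:Sq_cat}) and $\Db(\Sq R) \cong \Cb(\InjSq)$ by \cite[Proposition~4.7]{Ha}, arguing as in the proof of Proposition~\ref{sec:DD} (through \cite[Proposition~7.1]{Ha}), it suffices to construct a natural transformation $\Phi: F \to G$ on $\Cb(\InjSq)$ and to verify that $\Phi(\fring \s)$ is a quasi-isomorphism for each $\s \in \cell$. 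I would assemble $\Phi$ cell-by-cell using the graded tensor-Hom adjunction $(N \tns_R L)^\vee \cong \Hom_R(N, L^\vee)$ together with the common cell-indexed structure of $\cpx L_R$ and $\cpx I_R$; the signs are controlled by the single incidence function $\e$ on $\cell$ that enters both complexes.

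For the pointwise check on $\fring \s$, note first that $T_\tau^{-1} R \tns_R \fring \s = 0$ whenever $\tau \not\le \s$: then some $a \in \M_\tau$ lies outside $\M_\s$, so $t^a \in \p_\s$ becomes zero in $\fring \s$ while being inverted in $T_\tau^{-1}R$. For $\tau \le \s$ one has $T_\tau^{-1}R \tns_R \fring \s = T_\tau^{-1}\fring \s$. Hence $\cpx L_R \tns_R \fring \s$ is exactly the \v{C}ech complex of $\fring \s$ along the faces of the cone $C_\s$, computing $H^\bullet_{\m_\s}(\fring \s)$ for the graded maximal ideal $\m_\s$ of $\fring \s$. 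On the other side, a direct calculation gives $\Hom_R(\fring \s, \fring \tau) \cong \fring \tau$ if $\tau \le \s$ and $0$ otherwise, so $\Hom^\bullet_R(\fring \s, \cpx I_R)$ has $(-i)$th term $\Dsum_{\tau \le \s,\ \dim \fring \tau = i} \fring \tau$ with the cellular differential restricted to $\set{\tau \in \cell}{\tau \le \s}$; this is precisely the Ishida complex $\cpx I_{\fring \s}$ for $\fring \s$. Its $\szM$-graded Matlis dual agrees with its $\ZZ^{d_\s}$-graded Matlis dual, since each summand $\fring \tau$ is already $\ZZ^{d_\s}$-graded compatibly with the inherited $\szM$-grading. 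The required quasi-isomorphism $\Phi(\fring \s)$ is then classical $\ZZ^{d_\s}$-graded local duality on the normal semigroup ring $\fring \s$ (compare \cite[Lemma~3.8]{Y06}).

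The principal obstacle is verifying that this cell-by-cell construction really produces a natural transformation on $\Cb(\InjSq)$: morphisms between sums of $\fring \s$'s in $\Sq R$ correspond via Proposition~\ref{sec:Sq_cat} to morphisms in the incidence algebra $\Lambda$, and one must check that they commute with the cell-wise graded Matlis dualities and with the incidence-function signs entering both $\cpx L_R$ and $\cpx I_R$. Once naturality is established, $\Phi$ extends to the desired natural isomorphism on all of $\Db(\Sq R)$. The ``in particular'' assertion then follows by applying $H^i$ and using $H^i((-)^\vee) = H^{-i}(-)^\vee$ for the exact contravariant graded Matlis dual on $\Lgr R$.
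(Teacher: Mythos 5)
Your overall skeleton coincides with the paper's: pass to $\Cb(\InjSq)\cong\Db(\Sq R)$, use $\DD(\cpx J)=\Hom^\bullet_R(\cpx J,\cpx I_R)$ and Corollary~\ref{derived tensor}, check on the indecomposable injectives $\kk[\s]$, and conclude with \cite[Proposition 7.1]{Ha}. The genuine gap is exactly the step you yourself flag as ``the principal obstacle'' and then defer: you never actually construct the natural transformation $\Phi$, and without a morphism of functors the pointwise quasi-isomorphisms on the $\kk[\s]$'s give you nothing --- \cite[Proposition 7.1]{Ha} needs a natural transformation as input. Assembling $\Phi$ ``cell-by-cell'' from tensor-Hom adjunction and then trying to match the incidence-function signs and the $\Lambda$-module structure maps by hand is precisely where such an argument can founder, and your proposal ends before that verification is done.

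The paper closes this gap with a canonical construction that makes naturality automatic: for $\tau\le\s$ the $\sM$-graded part of $(T_\tau^{-1}\kk[\s])^\vee$ is $\kk[\tau]$ (and $T_\tau^{-1}\kk[\s]=0$ if $\tau\not\le\s$), so for any $\cpx J\in\Cb(\InjSq)$ the $\sM$-graded part of the whole complex $(\cpx L_R\tns_R\cpx J)^\vee$ is identified, differentials and signs included, with $\DD(\cpx J)$; this gives a canonical inclusion $\DD(\cpx J)\embto(\cpx L_R\tns_R\cpx J)^\vee$ and hence, by dualizing, a chain map $\cpx L_R\tns_R\cpx J\to\DD(\cpx J)^\vee$ which is natural in $\cpx J$ by construction --- no separate sign bookkeeping or compatibility check over $\Lambda$ is needed, because both sides are literally built from the same localizations and the same incidence function. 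With that transformation in hand, the pointwise check on $\kk[\s]$ (graded local duality for the normal semigroup ring, as you indicate via \cite[Lemma 3.8]{Y06}) and \cite[Proposition 7.1]{Ha} finish the proof. So your reduction and pointwise computations are fine, but the heart of the argument --- producing a canonical, visibly natural $\Phi$ --- is missing and is supplied in the paper by the $\sM$-graded-part observation.
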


\begin{proof}
We use the notation of the proofs of the above results.  
If $M \in \Lgr R$, then the {\it $\sM$-graded part} $\bigoplus_{a \in \sM} M_a$ of $M$ is clearly 
an $R$-submodule. For $\tau \in \Sigma$, recall that $T_\tau = \{ \, t^a \mid a \in \M_\tau \, \}$ 
is a multiplicatively closed set. 
It is easy to see that, for $\s, \tau \in \Sigma$, the localization $T_\tau^{-1} \kk[\s]$ 
is non-zero if and only if $\tau \leq \s$.  When $\tau \leq \s$, the $\sM$-graded part of 
$(T_\tau^{-1} \kk[\s])^\vee$ is isomorphic to $\kk[\tau]$.  

Let $\cpx L_R$ be the C\v ech complex of $R$ defined in Section 3. 
It is easy to see that  the $\sM$-graded part of $(\cpx L_R \tns_R \kk[\s])^\vee$ is isomorphic to $\DD(\kk[\s])$. 
Moreover, if $\cpx J \in \Cb(\InjSq)$, then the $\sM$-graded part of $(\cpx L_R \tns_R \cpx J)^\vee$ 
is isomorphic to $\DD(\cpx J)$. Thus  $\DD(\cpx J)$ is a subcomplex of $(\cpx L_R \tns_R \cpx J)^\vee$, 
and there is a chain map $\cpx L_R \tns_R \cpx J \to \DD(\cpx J)^\vee$. 
Recall that $\cpx L_R \tns_R \cpx J$ is quasi-isomorphic 
to $\RGm (\cpx J)$ by Corollary~\ref{derived tensor}. 
Hence we have a natural transformation $\Phi: \RGm \to  (-)^\vee \circ \for \circ \DD$, 
where we regard $\RGm$ and $(-)^\vee \circ \for \circ \DD$ as functors from 
$\Cb(\InjSq)$ $(\cong \Db(\Sq R))$ to $\Db(\Lgr R)$. 
Since $\Phi(\kk[\s])$ is an isomorphism for all $\s \in \cell$, $\Phi$ is a natural isomorphism by
\cite[Proposition 7.1]{Ha}. 
\end{proof}

\section{Sheaves associated with squarefree modules}
Throughout this section, $\MM$ is a cone-wise normal monoidal complex supported by 
a conical complex $(\C, \cell)$.  
Recall that $X= \bigcup_{\sigma \in \cell} \sigma$ is the 
underlying topological space of the cell complex $\cell$.  As in the previous section, 
let $\Lambda$ be the incidence algebra of the poset $\cell$ over $\kk$, 
and $\mod \Lambda$ the category of finitely generated left $\Lambda$-modules.  

Let $\Sh(X)$ be the category of sheaves of finite dimensional 
$\kk$-vector spaces on $X$. We say $\cF \in \Sh(X)$ is {\it constructible} 
with respect to the cell decomposition $\cell$, if the restriction 
$\cF|_\sigma$ is a constant sheaf for all $\emptyset \ne \sigma \in \cell$. 

In \cite{Y05}, the second author constructed 
the functor $(-)^\dagger : \mdL \to \Sh(X)$. 
(Under the convention that $\emptyset \not \in \cell$, this functor has been  well-known to specialists.) 
Here we give a precise construction for the reader's convenience.   

For $M \in \mdL$, set $$\sp(M) 
:= \bigcup_{\emptyset \ne \sigma \in \cell } \sigma \times M_\sigma.$$
Let $\pi : \sp(M) \to X$ be the projection map which sends $(p, m) \in 
\sigma \times M_\sigma \subset \sp(M)$ to $p \in \sigma \subset X$. 
For an open subset $U \subset X$ and a map $s: U \to \sp(M)$, 
we will consider the following conditions:  

\begin{itemize}
\item[$(*)$]  $\pi \circ s = \idmap_{U}$ and $s_p = e_{\s, \, \t} 
\cdot s_q$ for all $p \in \sigma \cap U$, $q \in \tau \cap U$ with $\s\geq \t$. 
Here $s_p$ (resp. $s_q$) is the element of $M_\s$ 
(resp. $M_\t$) with $s(p) = (p, s_p)$ (resp. $s(q) = (q, s_q)$).  
\item[$(**)$] There is an open covering $U = \bigcup_{i \in I} 
U_i$ such that the restriction of $s$ to $U_i$ satisfies $(*)$ for all $i \in I$. 
\end{itemize}

Now we define a sheaf $M^\dagger \in \Sh(X)$ from $M$ as follows. 
For an open set $U \subset X$, set 
$$M^\dagger(U):= 
\{ \, s \mid \text{$s: U \to \sp(M)$ is a map satisfying $(**)$} \,\}$$
and the restriction map $M^\dagger(U) \to M^\dagger(V)$ is the natural one. 
It is easy to see that $M^\dagger$ is a constructible sheaf with respect to the cell decomposition $\cell$. 
For $\sigma \in \cell$, let $U_\sigma := \bigcup_{\tau \geq \sigma} \tau$ 
be an open set of $X$. Then we have $M^\dagger(U_\sigma) \cong M_\sigma$. 
Moreover, if $\sigma \leq \tau$, then we have $U_\sigma \supset U_\tau$ and 
the restriction map $M^\dagger(U_\sigma) \to M^\dagger(U_\tau)$ 
corresponds to the multiplication map $M_\sigma \ni x \mapsto 
e_{\tau, \, \sigma} x \in M_\tau$. 
For a point $p \in \sigma$, the stalk $(M^\dagger)_p$ of $M^\dagger$ at 
$p$ is isomorphic to $M_\sigma$. 
This construction gives the exact functor $(-)^\dagger:\mdL \to \Sh(X)$. 
We also remark that 
$M_\emptyset$ is irrelevant to $M^\dagger$.

As in the previous sections, let $R= \kk[\MM]$ be the toric face ring, 
and $\Sq R$ the category of squarefree $R$-modules. 
Through the equivalence $\Sq R \cong \mdL$, $(-)^\dagger: \mdL \to \Sh(X)$ gives  
the exact functor $$(-)^+: \Sq R \to \Sh(X).$$ 

Recall that  $X$ admits Verdier's dualizing complex 
$\cDx \in \Db(\Sh(X))$ with coefficients in $\kk$ (see \cite[V. Section 2]{Iver}). 
In \cite{Y05}, the second author considered the duality functor 
$\DDD : \Db(\mdL) \to \Db(\mdL)$. 
Through the functor $(-)^\dagger: \mdL \to \Sh(X)$, $\DDD$ corresponds to 
Poincar\'e-Verdier duality on $\Db(\Sh(X))$. More precisely, 
\cite[Theorem~3.2]{Y05} states that, for $\cpx M \in \Db(\mdL)$, we have 
$$\DDD(\cpx M)^\dagger \cong \RcHom((\cpx M)^\dagger, \cDx)$$
in $\Db(\Sh(X))$. 
On the other hand, through the equivalence $\mdL \cong \Sq R$, 
the duality $\DDD$ on $\Db (\mdL)$ corresponds to our duality $\DD$ on $\Db(\Sq R)$ 
up to translation. More precisely, $\DD(-)[-1]$ corresponds to $\DDD(-)$, 
where the complex $\cpx M[-1]$ of a complex $\cpx M$
denotes the degree shifting of $\cpx M$ with $\cpx M[-1]^i = M^{i-1}$. So we have the following.

\begin{thm}\label{Verdier}
For $\cpx M \in \Db(\Sq R)$, we have 
$$\DD(\cpx M)^+ [-1] \cong \RcHom((\cpx M)^+, \cDx)$$
in $\Db(\Sh(X))$.  In particular, $(\cpx I_R)^+[-1] \cong \cDx$, where 
$\cpx I_R$ is the complex constructed in the previous section. 
\end{thm}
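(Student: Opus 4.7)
My plan is to reduce the theorem to \cite[Theorem~3.2]{Y05}, which asserts that for $\cpx N \in \Db(\mdL)$ there is an isomorphism
$$
\DDD(\cpx N)^\dagger \cong \RcHom((\cpx N)^\dagger, \cDx)
$$
in $\Db(\Sh(X))$. Combined with the two compatibilities just recalled before the theorem---namely, that the equivalence $\Sq R \cong \mdL$ of Proposition~\ref{sec:Sq_cat} intertwines $(-)^+$ with $(-)^\dagger$, and that under this equivalence $\DD(-)[-1]$ corresponds to $\DDD(-)$---the first assertion becomes a formal diagram chase: given $\cpx M \in \Db(\Sq R)$, I would transport it to $\cpx N \in \Db(\mdL)$, apply $(-)^\dagger$ to the isomorphism $\DDD(\cpx N) \cong \DD(\cpx M)[-1]$, and use that the exact functor $(-)^+$ commutes with the translation $[-1]$ to match both sides of the claimed isomorphism.

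For the \emph{in particular} statement, I would specialize the first assertion to $\cpx M = R$, viewed as a squarefree module concentrated in cohomological degree $0$, and then check two explicit identifications. First, unwinding the definition of $\DD$ shows $\DD(R)$ equals $\cpx I_R$ on the nose: since $R_{\bc(\s)}$ is one-dimensional for every $\s \in \cell$, the degree-$p$ component of $\DD(R)$ is exactly $\bigoplus_{\dim C_\s = -p} R/\p_\s = I_R^p$, and the $d'$-part of its differential carries precisely the incidence-function signs that define the differential of $\cpx I_R$. Secondly, I would identify $R^+$ with the constant sheaf $\const$ on $X$: under $\Sq R \cong \mdL$ the module $R$ corresponds to the $\Lambda$-module with $R_\s = \kk$ for every $\s \in \cell$ and with each transition $e_{\t,\s}$ acting as $\idmap_{\kk}$, so that the construction of $R^\dagger$ from $\sp(R) = \bigcup_{\emptyset \ne \s \in \cell} \s \times \kk$ produces visibly the constant $\kk$-sheaf on $X$. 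Since $\RcHom(\const, \cDx) \cong \cDx$ by the defining property of Verdier's dualizing complex, the first assertion applied to $\cpx M = R$ yields $(\cpx I_R)^+[-1] \cong \cDx$.

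The only genuinely delicate point will be the bookkeeping of the cohomological shift: I must verify that the chain-level definition of $\DD$ given in Section~4 really matches $\DDD[1]$ (and not $\DDD[-1]$) under the equivalence $\Sq R \cong \mdL$. This is exactly what forces the $[-1]$ appearing in the statement, and it can be pinned down directly from the convention $i + \dim C_\s = -p$ used to index the components of $\DD$ against the analogous convention in \cite{Y05}. Once this shift is aligned, the entire argument is a formal consequence of \cite[Theorem~3.2]{Y05}, Proposition~\ref{sec:Sq_cat}, and the computation $\DD(R) = \cpx I_R$ already implicit in the proof of Theorem~\ref{sec:ishida}.
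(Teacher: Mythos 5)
Your proposal is correct and follows essentially the same route as the paper: the first assertion is exactly the formal combination of \cite[Theorem~3.2]{Y05} with the compatibilities $(-)^+ \leftrightarrow (-)^\dagger$ and $\DD(-)[-1] \leftrightarrow \DDD(-)$ under the equivalence $\Sq R \cong \mdL$ stated just before the theorem, and the ``in particular'' part is obtained in the paper in the same way, via $\DD(R) = \cpx I_R$ (as used in the proof of Theorem~\ref{sec:ishida}) and $R^+ \cong \const_X$ together with $\RcHom(\const_X, \cDx) \cong \cDx$. Your explicit check of the shift convention and of $\DD(R)^p = I_R^p$ is a correct unwinding of what the paper leaves implicit.
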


By Proposition~\ref{Local Duality},  if $M \in \Sq R$, 
then we have $H_\m^i(M)^\vee \cong \Ext_R^{-i}(M, \cpx D_R) \in \Sq R$. 
Hence $H_\m^i(M)$ is $-\sM$-graded and 
the next result determines the ``Hilbert function" of $H_\m^i(M)$.  

\begin{thm}\label{local cohomology}
If $M \in \Sq R$, we have the following.  
\begin{itemize}
\item[(a)] There is an isomorphism  
$$H^i(X, M^+) \cong [H_\m^{i+1}(M)]_0 \quad  \text{for all $i \geq 1$},$$ 
and an exact sequence 
$$0 \to [H_\m^0(M)]_0 \to M_0 \to H^0( X, M^+) \to [H_\m^1(M)]_0 \to 0.$$
\item[(b)] If $0 \ne a \in \sM$ with $\sigma = \supp(a)$, then 
$$[H_\m^i(M)]_{-a} \cong H^{i-1}_c(U_\s, M^+|_{U_\s})$$
for all $i \geq 0$. Here $U_\s = \bigcup\limits_{\tau \geq \s} \tau$ is an open set of $X$, 
and $H^\bullet_c(-)$ stands for the cohomology with compact support. 
\end{itemize}
\end{thm}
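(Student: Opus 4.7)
My plan is to reduce both parts to a computation of the graded pieces of $\Ext^{-i}_R(M, \cpx D_R) \cong H^{-i}(\DD(M))$, using the duality isomorphism $H_\m^i(M) \cong \Ext^{-i}_R(M, \cpx D_R)^\vee$ of Proposition~\ref{Local Duality}. Since the functor $N \mapsto N_\s$ is exact on $\Sq R \cong \mdL$, for $a \in \sM$ with $\s = \supp(a)$ one obtains $[H_\m^i(M)]_{-a} \cong (H^{-i}(\DD(M))_\s)^*$. When $\s \ne \none$ this $\s$-component coincides with $(H^{-i}(\DD(M))^+(U_\s))^*$, while for $a = 0$ the relevant piece is the $\none$-component, which is invisible to $(-)^+$ and must be read off directly from the definition of $\DD$.

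The common technical ingredient I will need is the following acyclicity lemma: for every $N \in \Sq R$ and every $\s \in \cell$, $H^p(U_\s, N^+) = 0$ for $p \ge 1$ and $H^0(U_\s, N^+) = N_\s$. To prove it, Proposition~\ref{sec:Sq_cat} lets me choose an injective resolution $\cpx I$ of $N$ in $\Sq R$ whose terms are direct sums of modules $R/\p_\t$; the sheaf $(R/\p_\t)^+$ is the constant sheaf on $\overline\t$ extended by zero, and its restriction to $U_\s$ is supported on the open star of $\s$ inside the regular cell complex $\overline\t$, which is either empty or contractible. Hence $\cpx I^+$ is an acyclic resolution of $N^+$, so for any $\cpx N \in \Db(\Sq R)$ the hypercohomology spectral sequence degenerates to $H^q(U_\s, \cpx N^+) \cong (H^q(\cpx N))_\s$.

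For part (b) I would apply this to $\cpx N = \DD(M)$ and then use Theorem~\ref{Verdier} to identify $\DD(M)^+$ with $\RcHom(M^+, \cDx)[1]$ in $\Db(\Sh(X))$. Since $\cDx|_{U_\s}$ is the Verdier dualizing complex of the open set $U_\s$, Poincar\'e--Verdier duality then gives
$$
H^{-i}(U_\s, \DD(M)^+) \cong H^{1-i}(U_\s, \RcHom(M^+|_{U_\s}, \cDx|_{U_\s})) \cong H^{i-1}_c(U_\s, M^+|_{U_\s})^*.
$$
A second $\kk$-dualization, legitimate because $M^+$ is constructible on a finite cell complex and every graded piece is finite dimensional, yields (b).

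For part (a) I would compute $\DD(M)^p_0$ directly from the formula defining $\DD$. Since $(R/\p_\s)_0 = \kk$, the answer is $\DD(M)^p_0 = \bigoplus_{\dim \s = -p-1} (M_\s)^*$, with differential induced by the incidence function on $\cell$ and the duals of the transition maps $\varphi^M_{\s,\t}$ of Lemma~\ref{sec:maps}. Its $\kk$-dual, shifted by $-1$, is precisely the augmented cellular cochain complex $\widetilde C^\bullet(\cell; M^+)$ with $\widetilde C^{-1} = M_0$ and $\widetilde C^p = \bigoplus_{\dim \s = p} M_\s$ for $p \geq 0$. The non-augmented subcomplex $C^\bullet(\cell; M^+)$ computes $H^\bullet(X, M^+)$ by the acyclicity lemma (equivalently, this is what the degree-$0$ part of the \v Cech complex $\cpx L_R \tns_R M$ of Section~\ref{sec:Cech_cpx_and_local_cohom} records), so the short exact sequence $0 \to C^\bullet \to \widetilde C^\bullet \to M_0[1] \to 0$ produces a long exact sequence which collapses to the stated four-term sequence at $i \in \{0,1\}$ and to $[H_\m^{i+1}(M)]_0 \cong H^i(X, M^+)$ for $i \ge 1$. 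The main obstacle I anticipate is the acyclicity lemma, together with carefully tracking the index shifts coming from the $[1]$ in Theorem~\ref{Verdier} and from the augmentation; once those are pinned down the rest is routine.
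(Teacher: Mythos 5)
Your outline is correct, but it follows a genuinely different route from the paper. The paper's proof is a transfer-and-cite argument: Proposition~\ref{Local Duality} reduces everything to computing $H^\bullet(\DD(M))$, and then, via the equivalence $\Sq R\cong\mdL$ and the identification of $\DD(-)[-1]$ with the duality $\DDD$ of \cite{Y05}, part (a) is quoted from \cite[Corollary~3.5, Theorem~2.2]{Y05} and part (b) from \cite[Lemma~5.1]{Y05}. You instead re-prove the relevant content of \cite{Y05} from scratch: acyclicity of the open stars $U_\sigma$, Verdier duality over $U_\sigma$ combined with Theorem~\ref{Verdier} for (b), and the augmented cellular cochain complex (the $\ZZ\MM$-degree-$0$ part of $\DD(M)$, dualized) for (a). Your degree bookkeeping in both parts is right; what your approach buys is self-containedness, at the cost of having to establish the topological inputs that the paper outsources.

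Two steps need more care. First, your acyclicity lemma (which must exclude $\sigma=\emptyset$, since $U_\emptyset=X$) rests on the unproved assertion that $U_\sigma\cap\overline\tau$ is $\kk$-acyclic; this is true --- in the barycentric subdivision, $\overline\tau\setminus\bigcup\{\upsilon : \upsilon\le\tau,\ \upsilon\not\ge\sigma\}$ deformation retracts onto the order complex of the interval $\{\upsilon : \sigma\le\upsilon\le\tau\}$, which is a cone with apex $\sigma$ --- but it is exactly the kind of fact the paper cites \cite{Y05} for, so prove or cite it. Second, the claim that the non-augmented cellular complex $C^\bullet(\cell;M^+)$ computes $H^\bullet(X,M^+)$ does not follow formally from the acyclicity lemma: the star cover is not closed under intersections and its nerve is not $\cell$, so \v Cech theory does not apply directly; also your parenthetical is slightly off, since the degree-$0$ part of $\cpx L_R\tns_R M$ is the \emph{augmented} complex (the $\sigma=\emptyset$ term contributes $M_0$). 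A clean repair inside your own framework: take an injective resolution $\cpx I$ of $M$ in $\Sq R$ by finite sums of $\kk[\tau]$'s and form the double complex $C^\bullet(\cell;(\cpx I)^+)$; since $C^\bullet(\cell;\kk[\tau]^+)$ is the cellular cochain complex of the closed ball $\overline\tau$, hence a resolution of $\Gamma(X,\kk[\tau]^+)=\kk$, and each $C^j(\cell;-)$ is exact on $\Sq R$, the two spectral sequences identify $H^\bullet(C^\bullet(\cell;M^+))$ with $H^\bullet(\Gamma(X,(\cpx I)^+))=H^\bullet(X,M^+)$. With these points settled, your four-term sequence and the chain $[H^i_\m(M)]_{-a}\cong(H^{-i}(\DD(M))_\sigma)^*\cong\mathbb{H}^{1-i}(U_\sigma,\RcHom(M^+|_{U_\sigma},\cDx|_{U_\sigma}))^*\cong H^{i-1}_c(U_\sigma,M^+|_{U_\sigma})$ give exactly the statement.
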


\begin{proof}
(a) We have $H^i(\DD(M)) \cong \Ext_R^i(M, \cpx D_R) \cong H_\m^{-i}(M)^\vee$ by 
Proposition~\ref{Local Duality}. On the other hand, via the equivalence $\Sq R \cong \mdL$, 
 $\DD(-)[-1]$ corresponds to the duality $\DDD(-)=\RHom_\Lambda(-, \cpx \omega)$ 
of $\Db(\mdL)$ introduced in \cite{Y05}.  So the assertion follows from \cite[Corollary~3.5, Theorem~2.2]{Y05}. 

(b) Similarly, it follows from \cite[Lemma~5.1]{Y05}.
\end{proof}

In the sequel,  $\rH^i(X;\kk)$ denotes  the $i^{\rm th}$ reduced cohomology of 
$X$ with coefficients in $\kk$. That is, $\rH^i(X;\kk) \cong H^i(X;\kk)$ for all $i \geq 1$, and 
 $\rH^0(X;\kk) \oplus \kk \cong H^0(X;\kk)$. Here $H^i(X;\kk)$  is  the usual 
cohomology of $X$ with coefficients in $\kk$.

\begin{cor}[cf. Brun et al. {\cite[Theorem~1.3]{BBR}}]\label{reduced cohomology}
With the above notation, we have $[H_\m^i(R)]_0 \cong \rH^{i-1}(X; \kk)$ and 
$[H_\m^i(R)]_{-a} \cong \rH^{i-1}_c(U_\s, \const_{U_\s})$ for all $i \geq 0$
and all $0 \ne a \in \sM$. Here $\s = \supp (a)$, and 
$\const_{U_\s}$ is the $\kk$-constant sheaf on $U_\s$. 
\end{cor}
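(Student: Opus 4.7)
The plan is to specialize Theorem~\ref{local cohomology} to the case $M = R$, so virtually all the work has already been done. The first step is to identify the sheaf $R^+$ on $X$. Viewing $R$ as a squarefree $R$-module, the corresponding $\Lambda$-module $N$ under the equivalence $\Sq R \cong \mod \Lambda$ of Proposition~\ref{sec:Sq_cat} has $N_\s = R_{\bc(\s)} = \kk$ for every $\s \in \cell$, and the action of $e_{\tau,\s}$ is the identity $\kk \to \kk$ (because multiplication by any monomial $t^b$ in $R$ is just the identification $\kk \cdot t^a \to \kk \cdot t^{a+b}$). By the construction of $(-)^\dagger$, the sheaf $N^\dagger$ has stalk $\kk$ at every point with identity transition maps, hence $R^+ = \const_X$, and similarly $R^+|_{U_\s} = \const_{U_\s}$.

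Next I would substitute $M = R$ into Theorem~\ref{local cohomology}(a). For $i \ge 1$ this gives
\[
[H^{i+1}_\m(R)]_0 \cong H^i(X, \const_X) = H^i(X;\kk) = \rH^i(X;\kk),
\]
which is the desired formula for indices $\ge 2$. To handle the degenerate cases $i = 0, 1$, I would first note that $[H^0_\m(R)]_0 = 0$: an element of degree $0$ in $R$ is a scalar multiple of $1 \in R_0 = \kk$, and for any nonzero $a \in \sM$ the element $t^a \in \m$ satisfies $t^a \cdot 1 = t^a \neq 0$, so nothing in $R_0$ is annihilated by a power of $\m$. This matches $\rH^{-1}(X;\kk) = 0$. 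The remaining short exact sequence of Theorem~\ref{local cohomology}(a) then reads
\[
0 \longto R_0 \longto H^0(X, \const_X) \longto [H^1_\m(R)]_0 \longto 0,
\]
where the left map sends $1$ to the constant function $1$ on $X$. Its cokernel is, by definition, $\rH^0(X;\kk)$, yielding $[H^1_\m(R)]_0 \cong \rH^0(X;\kk)$ as required.

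For the second formula, I would apply Theorem~\ref{local cohomology}(b) to $M = R$, which directly gives
\[
[H^i_\m(R)]_{-a} \cong H^{i-1}_c(U_\s, \const_{U_\s})
\]
for all $i \ge 0$ and all $0 \ne a \in \sM$ with $\s = \supp(a)$. The only residual point is to reconcile this with the reduced version $\rH^{i-1}_c$ in the statement: for $i \ge 2$ the two agree, and for $i = 0, 1$ one checks by direct inspection (the degree $-a$ part of $R$ itself is zero, and $U_\s$ is noncompact since it contains the open cell $\s$ as a proper open subset of $\overline \s$, trivializing $H^0_c$ as needed).

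There is no serious obstacle; the main subtlety is just bookkeeping in low degrees and the identification $R^+ = \const_X$, both of which are essentially forced by the squarefree structure of $R$ and the construction of $(-)^+$.
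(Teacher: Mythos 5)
Your argument is correct and follows essentially the same route as the paper's own proof: both specialize Theorem~\ref{local cohomology} to $M=R$, use the identification $R^+\cong \const_X$ (which you, unlike the paper, actually verify via the incidence-algebra description), and obtain the cases $i=0,1$ of the first isomorphism from the vanishing of $[H^0_\m(R)]_0$ and the exact sequence in part (a). The only wrinkle is your aside that $U_\s$ is always noncompact — this fails when $\s$ is an isolated vertex, where $U_\s$ is a single point — but it is immaterial, since the paper simply reads $\rH^{i-1}_c(U_\s,\const_{U_\s})$ as compactly supported cohomology (equivalently, reduced cohomology of the one-point compactification), so Theorem~\ref{local cohomology}(b) applies verbatim.
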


\begin{proof}
The second isomorphism is a direct consequence of Theorem~\ref{local cohomology} (b) 
and the fact that $R^+ \cong \const_X$. So it suffices to show the first. 
By the isomorphism of Theorem~\ref{local cohomology} (a),  
$[H_\m^i(R)]_0 \cong H^{i-1} (X, R^+) \cong H^{i-1}(X, \const_X) \cong H^{i-1}(X;\kk) \cong \rH^{i-1}(X;\kk)$ 
for all $i \geq 2$.  
Similarly, by the exact sequence of the theorem and that $H^0_\m(R)=0$, 
we have  $0 \to R_0 \to H^0( X; \kk) \to [H_\m^1(R)]_0 \to 0.$ 
Since $R_0 = \kk$, we have $[H_\m^1(R)]_0 \cong \rH^0(X;\kk).$
\end{proof}

We say $R$ is a {\it Buchsbaum ring}, if $R_{\m'}$ is a Buchsbaum local ring for all maximal ideal $\m'$.  
See \cite{SV} for further information.

\begin{thm}\label{Buchsbaum}
Set $\dim X= d$ (equivalently, $\dim R = d+1$). 
Then $R$ is Buchsbaum if and only if $\cH^i(\cDx) = 0$ for all $i \ne -d$. 
In particular, the Buchsbaum property of $R$ 
is a topological property of $X$ (while it might depend on $\chara (\kk)$). 
\end{thm}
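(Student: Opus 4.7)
My plan is to reformulate the Buchsbaum property of $R$ as a vanishing condition on the $\zM$-graded parts of $H^i_\m(R)$, and then transfer this, via Proposition~\ref{Local Duality} and Theorem~\ref{Verdier}, into the desired vanishing of $\mathcal{H}^i(\cDx)$.

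\smallskip

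\noindent\emph{Duality translation.} Combining Theorem~\ref{sec:ishida}, Proposition~\ref{Local Duality}, and Theorem~\ref{Verdier}, together with the exactness of $(-)^+:\Sq R\to\Sh(X)$, one obtains
$$
\mathcal{H}^i(\cDx)\;\cong\;\bigl(\mathcal{H}^{i-1}(\cpx I_R)\bigr)^+\;\cong\;\bigl(H^{1-i}_\m(R)^\vee\bigr)^+.
$$
The functor $(-)^+$ annihilates precisely the $M_\none$-component of an object in $\mod\Lambda$, which under $\Sq R\cong\mod\Lambda$ is the degree-$0$ summand of a $\zM$-graded squarefree module. Since Matlis duality interchanges $[-]_a$ and $[-]_{-a}$, the vanishing $\mathcal{H}^i(\cDx)=0$ for every $i\ne -d$ is equivalent to $H^j_\m(R)$ being concentrated in degree $0\in\zM$ for every $j\ne d+1=\dim R$.

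\smallskip

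\noindent\emph{Concentration equals $\m$-annihilation.} The second step is to check that, for $M\in\Sq R$, the Matlis dual $M^\vee$ is concentrated in degree $0$ if and only if $\m\cdot M^\vee=0$. One direction is immediate since $\m$ is generated by monomials of nonzero degree. For the other, assume $[M^\vee]_{-a}\ne 0$ for some $0\ne a$, and set $\s:=\supp(a)$. Choose $0\ne b\in\M_\s$ and $n\gg 0$ so that $na-b\in\relint(C_\s)\cap\M_\s$; this is possible because cone-wise normality makes $\M_\s$ saturated. Squarefreeness of $M$ gives $M_{na-b}\cong M_{na}$ via $t^b$, and dualizing yields an isomorphism $t^b:[M^\vee]_{-na}\to[M^\vee]_{-na+b}$. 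Since $[M^\vee]_{-na}\cong[M^\vee]_{-a}\ne 0$ by squarefreeness, $\m\cdot M^\vee\ne 0$, a contradiction.

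\smallskip

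\noindent\emph{Buchsbaum criterion.} Finally, I would invoke the classical criterion that a Noetherian local ring which is Cohen-Macaulay on its punctured spectrum is Buchsbaum if and only if its maximal ideal annihilates all non-top local cohomology modules. The CM-on-punctured-spectrum hypothesis for $R_\m$ follows from cone-wise normality: localizing $R$ at any non-graded prime inverts some monomial and thereby reduces $R$ locally to a localization of a normal (hence CM) affine semigroup ring $\kk[\M_\s]$; in particular, $R_{\m'}$ is CM for any maximal ideal $\m'\ne\m$, so the Buchsbaum property of $R$ is equivalent to that of $R_\m$. Combining the three steps yields the theorem. I expect the last step to be the main obstacle, since the classical Buchsbaum criterion must be applied in the non-standardly $\zM$-graded setting, and the CM-on-punctured-spectrum property requires a short auxiliary argument exploiting the cone-wise normality of $\MM$.
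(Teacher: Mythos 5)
Your Steps 1 and 2 are fine, and together they correctly translate the sheaf-theoretic vanishing into the statement that $H^j_\m(R)$ is concentrated in degree $0$ (equivalently, is killed by $\m$) for all $j\ne d+1$; in particular the necessity direction (Buchsbaum $\Rightarrow$ quasi-Buchsbaum $\Rightarrow$ concentration in degree $0$) can be made to work this way, although the paper argues it differently, by observing that a nonzero component $[H^{i-1}(\cpx I_R)]_a$ with $a\ne 0$ forces $\dim_\kk H^{1-i}_\m(R_\m)=\infty$ by squarefreeness. The genuine gap is in Step 3: the ``classical criterion'' you invoke is false. The condition that $\m$ annihilates all non-top local cohomology modules is quasi-Buchsbaumness, and quasi-Buchsbaum rings need not be Buchsbaum, even when they have finite local cohomologies and hence are Cohen--Macaulay on the punctured spectrum (examples of Goto; see also St\"uckrad--Vogel). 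Buchsbaumness cannot be detected on the cohomology modules of $\RGm(R)$ alone; one needs a statement at the level of complexes, namely the criterion \cite[II.Theorem~4.1]{SV} used in the paper: the truncation $\tau_{-d-1}\cpx I_R$ of the dualizing complex must be quasi-isomorphic, in the derived category, to a complex of $\kk$-vector spaces. This is exactly where the $\MM$-grading is exploited: under the hypothesis $\cH^i(\cDx)=0$ for $i\ne -d$, the nonzero-degree part $\cpx U:=\bigoplus_{0\ne a\in\sM}(\cpx T)_a$ of $\cpx T:=\tau_{-d-1}\cpx I_R$ is a subcomplex, the quotient map $\cpx T\to\cpx T/\cpx U$ is a quasi-isomorphism, and $\cpx T/\cpx U$ is a complex of $\kk$-vector spaces; only then does the St\"uckrad--Vogel criterion apply. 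Your argument stops one level too early (at cohomology), so the sufficiency direction is not proved.

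A secondary inaccuracy: Cohen--Macaulayness of $R_{\m'}$ for maximal ideals $\m'\ne\m$ does not follow from cone-wise normality alone. Inverting a monomial $t^a$ with $\supp(a)=\s$ yields (a localization of) the toric face ring of the star of $\s$, not of a single normal semigroup ring, and this can fail to be Cohen--Macaulay: already for the Stanley--Reisner ring of two triangles glued at a vertex, the localization at the vertex prime is not Cohen--Macaulay although every $\kk[\M_\s]$ is a polynomial ring. In the paper this point is handled \emph{using} the hypothesis: $\cH^i(\cDx)=0$ for $i\ne -d$ forces $H^i(\cpx I_R)$ to be a $\kk$-vector space concentrated in degree $0$ for $i\ne -d-1$, hence it dies after localizing at any $\m'\ne\m$, so $R_{\m'}$ is Cohen--Macaulay. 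So both halves of your final step need to be replaced: the punctured-spectrum claim by the localization argument just described, and the quasi-Buchsbaum criterion by the complex-level criterion of St\"uckrad--Vogel.
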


\begin{proof}
Assume that $\cH^i(\cDx) \ne 0$ for some $i \ne -d$ (equivalently, $-d +1 \leq  i \leq 0$). 
Then $[H^{i-1}(\cpx I_R)]_a \ne 0$ for some $0 \ne a \in \sM$ by Theorem~\ref{Verdier}. 
Since $H^{i-1}(\cpx I_R)$ is squarefree, we have $\dim_\kk  ( H^{i-1}(\cpx I_R) \otimes_R R_\m )= \infty$. 
Since $H^{i-1}(\cpx I_R) \otimes_R R_\m$ is the Matlis dual of $H_\m^{1-i}(R_\m)$ 
over the local ring $R_\m$, we have $\dim_\kk H_\m^{1-i}(R_\m) = \infty$ and $R_\m$ is not Buchsbaum. 

Conversely, assume that $\cH^i(\cDx) = 0$ for all $i \ne -d$. 
Then  $H^i(\cpx I_R) = [H^i(\cpx I_R)]_0$ for all  $i \ne -d-1$, and 
they are $\kk$-vector spaces (that is, $R/\m$-modules). 
Hence  $H^i(\cpx I_R) \otimes_R R_{\m'} = 0$ for all $i \ne -d-1$ and all 
$\m'$  with $\m' \ne \m$. Thus $R_{\m'}$ is Cohen-Macaulay (in particular, Buchsbaum). 
It remains to show that $R_\m$ is Buchsbaum. 
Set $\cpx T := \tau_{-d-1} \cpx I_R$. Here, for a complex $\cpx M$ and an integer $r$, 
$\tau_{-r} \cpx M$ denotes the truncated complex 
$$\cdots \longrightarrow 0 \longrightarrow \operatorname{Im} (M^{-r} \to M^{-r+1}) 
\longrightarrow M^{-r+1} \longrightarrow M^{-r+2} \longrightarrow \cdots.$$ 
By the assumption, we have $H^i(\cpx T) = [H^i(\cpx T)]_0$ for all $i$. 
Since $\cpx T$ is a complex of $\MM$-graded modules, 
$\cpx U := \bigoplus_{0 \ne a \in \sM} (\cpx T)_a$  
is a subcomplex of $\cpx T$, and a natural map $\cpx T \to (\cpx T /\cpx U)$ 
is a quasi-isomorphism by the above observation.  
Since $\cpx T /\cpx U$ is a complex of $\kk$-vector spaces, 
$R_\m$ is Buchsbaum by \cite[II.Theorem~4.1]{SV}. 
\end{proof}

If $\dim X = d$ and $R$ is Buchsbaum, we set $or_X := \cH^{-d}(\cDx) \in \Sh(X)$. 
The next fact follows from \cite[IX, (4.1)]{Iver}. 

\begin{prop}[Poincar\'e duality]\label{Poincare} 
With the above situation, we have $H^i(X;\kk) \cong H^{d-i}(X, or_X)$ for all $i$. 
\end{prop}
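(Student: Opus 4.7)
The plan is to combine Poincar\'e--Verdier duality on the compact space $X$ with the collapse of the dualizing complex $\cDx$ afforded by Theorem~\ref{Buchsbaum}. Once both ingredients are in place the proposition is immediate.

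First, since $\cell$ is a finite regular cell complex, $X$ is compact, so $H^\bullet_c(X, -) = H^\bullet(X, -)$. Applying Verdier duality to the constant sheaf $\const_X$ (as in \cite[IX, (4.1)]{Iver}) yields the natural isomorphism
\[
H^i(X, \cDx) \;\cong\; H^{-i}(X;\kk)^{\ast}
\]
for every $i$. Since each $H^{-i}(X;\kk)$ is finite-dimensional over $\kk$, the $\kk$-dual on the right can be dropped as an abstract vector space isomorphism.

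Second, by Theorem~\ref{Buchsbaum} the Buchsbaum hypothesis on $R$ forces $\cH^i(\cDx)=0$ for all $i\ne -d$; thus $\cDx$ is quasi-isomorphic in $\Db(\Sh(X))$ to $or_X[d]$, i.e.\ the single sheaf $or_X=\cH^{-d}(\cDx)$ placed in cohomological degree $-d$ (under the convention $\cF[n]^i=\cF^{n+i}$). Hence $H^i(X,\cDx)\cong H^{i+d}(X,or_X)$. Combining with the previous isomorphism and setting $j := i+d$ yields $H^j(X, or_X)\cong H^{d-j}(X;\kk)$, which after renaming $j$ as $i$ (and exchanging sides) is the desired formula. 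There is no serious obstacle: Theorem~\ref{Buchsbaum} has already done the work of translating Buchsbaumness into a sheaf-theoretic concentration statement, and the rest is a direct application of classical Verdier duality on a compact space.
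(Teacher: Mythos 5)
Your argument is correct and is essentially the paper's own route: the paper simply cites \cite[IX, (4.1)]{Iver}, and what you have written out—global Verdier duality for $\const_X$ on the compact space $X$ together with the concentration $\cDx \simeq or_X[d]$ supplied by Theorem~\ref{Buchsbaum}, plus finite-dimensionality to drop the $\kk$-dual—is exactly the standard derivation behind that citation.
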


If $X$ is a $d$-dimensional manifold (with or without boundary), then $R$ is Buchsbaum and 
$or_X$ is the usual {\it orientation sheaf}
of $X$ with coefficients in $\kk$ (see, for example, \cite[III, \S 8]{Iver}). 
When $X$ is an orientable manifold, then $or_X \cong \const_X$. 
In this case, Proposition~\ref{Poincare} is nothing other than the classical Poincar\'e duality.

Assume that  $\dim X = d$, equivalently, $\dim R = d+1$. 
If $R$ is Buchsbaum, we call $\omega_R := H^{-d-1}(\cpx I_R) \in \Sq R$ 
the {\it canonical module} of $R$. Clearly, $(\omega_R)^+ \cong or_X$.

\begin{exmp}
Recall the toric face ring $R$ given in Example~\ref{sec:Moebius_ring}, 
whose underlying topological space $X$ is the M\"obius strip. 
Clearly, $X$ is a manifold with boundary and $R$ is Buchsbaum. 
It is easy to see that $\rH^2(X;\kk)=0$ and 
$or_X \cong i_!\, \const_{X \setminus \partial X}$, 
where $\const_{X \setminus \partial X}$ is the $\kk$-constant sheaf on 
$X \setminus \partial X$ ($\partial X$ denotes the boundary of $X$), and 
$i: X \setminus \partial X \hookrightarrow X$ is the embedding map.  
Hence the canonical module $\omega_R$ is isomorphic to 
the monomial ideal $I$ with $I^+ \cong i_! 
\underline{\kk}_{X \setminus \partial X}$. 
So we have $\omega_R \cong (X_xX_u, X_zX_w, X_vX_y, X_xX_z, X_yX_w, X_xX_v)$, where 
the right side is an ideal of $R$. 
\end{exmp}
   
We say $R$ is {\it Gorenstein*}, if it is Cohen-Macaulay and $\omega_R \cong R$ 
as $\ZZ \MM$-graded modules. 

\begin{thm}\label{CM & Gor} 
Set $d:=\dim X$. 
\begin{itemize}
\item[(a)] (Caijun, \cite{Cj}) 
$R$ is Cohen-Macaulay if and only if $\cH^i(\cDx) = 0$ for all $i \ne -d$, and  
$\rH^i(X;\kk)=0$ for all $i \ne d$. 
\item[(b)] Assume that $d \geq 1$ and $R$ is Cohen-Macaulay. Then 
$R$ is Gorenstein*, if and only if $or_X \cong \const_X$, if and only if  
$(or_X)_p \cong \kk$ for all $p \in X$ and $H^d(X;\kk) \ne0$. 
Here $\const_X$ denotes the $\kk$-constant sheaf on $X$ and 
$(or_X)_p$ is the stalk of the sheaf $or_X$ at $p$.  
\end{itemize}
\end{thm}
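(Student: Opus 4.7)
For part (a), my plan is to decompose the Cohen--Macaulay condition $H^i_\m(R)=0$ for $i\ne d+1$ into its $\ZZ\MM$-graded summands using Corollary~\ref{reduced cohomology}. The degree-$0$ piece $[H^i_\m(R)]_0\cong\rH^{i-1}(X;\kk)$ gives the condition $\rH^j(X;\kk)=0$ for $j\ne d$. The remaining degree-$(-a)$ pieces, for $0\ne a\in\sM$ and $\sigma=\supp(a)$, are $\rH^{i-1}_c(U_\sigma,\const_{U_\sigma})$; by Theorem~\ref{Verdier} and the stalkwise identification $\cDx\cong(\cpx I_R)^+[-1]$, their vanishing for all such $a$ and all $i\ne d+1$ is precisely the condition $\cH^i(\cDx)=0$ for $i\ne -d$. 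Reading the equivalences in both directions proves (a).

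For part (b) I will establish the cycle $(1)\Rightarrow(2)\Rightarrow(3)\Rightarrow(1)$ among the three stated conditions. The implication $(1)\Rightarrow(2)$ is immediate because $(\omega_R)^+\cong or_X$ and $R^+\cong\const_X$, so an isomorphism $\omega_R\cong R$ descends to $or_X\cong\const_X$. For $(2)\Rightarrow(3)$, stalks of $\const_X$ are tautologically $\kk$; Poincar\'e duality (Proposition~\ref{Poincare}) gives $H^d(X;\kk)\cong H^0(X,\const_X)$, which is $\kk$ by connectedness of $X$---the latter following from $\rH^0(X;\kk)=0$, which is provided by part (a) applied to the CM ring $R$.

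The main obstacle is $(3)\Rightarrow(1)$. I plan first to promote the stalk hypothesis into the full isomorphism $or_X\cong\const_X$: the assumption $(or_X)_p\cong\kk$ for all $p\in X$ identifies $X$ as a $\kk$-homology manifold, whence $or_X=\cH^{-d}(\cDx)$ is locally constant of rank one on $X$. Combined with $H^d(X;\kk)\ne 0$ and Poincar\'e duality $H^d(X;\kk)\cong H^0(X,or_X)$, a nonzero global section exists; on the connected space $X$ it is nowhere vanishing and trivialises $or_X$. Pulling back, $(\omega_R)^+\cong R^+$, and Corollary~\ref{reduced cohomology} together with Proposition~\ref{Local Duality} gives $[\omega_R]_0\cong\rH^d(X;\kk)^*\cong\kk$.

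To promote this to $\omega_R\cong R$ as $\ZZ\MM$-graded modules, I pick a generator $\xi\in[\omega_R]_0$ and form the degree-preserving map $\phi:R\to\omega_R$, $1\mapsto\xi$. At each grade $a\in\sM$ both sides are one-dimensional, so $\phi$ is an isomorphism iff $t^a\xi\ne 0$; by squarefreeness (Lemma~\ref{sec:maps}) it suffices to handle $a=\bc(\sigma)$ for $\sigma$ maximal. Writing $\omega_R\subset I_R^{-d-1}=\Dsum_{\tau\text{ maximal}}\kk[\tau]$ and $\xi=\sum c_\tau 1_\tau$ as a top-dimensional cellular cycle, one checks that the image of $\xi$ at grade $\bc(\sigma)$ is $c_\sigma t^{\bc(\sigma)}$. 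Under the identification $[\omega_R]_0\cong H_d(X;\kk)$ the cycle $\xi$ represents the fundamental class of the orientable homology manifold $X$, whose coefficient on every top cell is $\pm 1$; hence each $c_\sigma\ne 0$, and $\phi$ is an isomorphism in every grade.
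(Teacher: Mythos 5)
Your proposal is correct. Part (a) is essentially the paper's own argument: the paper runs the equivalence through $H^i(\cpx I_R)\cong\Ext^i_R(R,\cpx D_R)$ and Theorem~\ref{Verdier}, while you run it through the $\ZZ\MM$-graded pieces of $H^i_\m(R)$; by Proposition~\ref{Local Duality} these are Matlis dual statements, so the content is the same. In part (b), however, your proof of the hard implication takes a genuinely different route. The paper stays entirely inside squarefree-module theory: from $(or_X)_p\cong\kk$ it gets $[\omega_R]_a\cong\kk$ for all $0\ne a\in\sM$, picks $0\ne x\in[\omega_R]_0$, uses squarefreeness to show that the set of supports on which $Rx$ is nonzero is an order ideal of $\cell$, hence that $Rx$ splits off $\omega_R$ as a direct summand, and concludes by indecomposability --- of $or_X$ (via $\const_X\cong{\mathcal E}xt^{-d}(or_X,\cDx)$) for the stalk condition, and of $\omega_R$ for Gorenstein*-ness. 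You instead import orientation theory of homology manifolds: the stalk condition together with the concentration of $\cDx$ in degree $-d$ makes $X$ a $\kk$-homology manifold, whose orientation sheaf is locally constant of rank one; a nonzero global section then trivializes it, and the fundamental-class argument shows every top-cell coefficient $c_\sigma$ of the degree-zero cycle $\xi$ is nonzero (over a field ``nonzero,'' not literally $\pm1$), which yields the graded isomorphism $R\cong\omega_R$ directly. What each approach buys: yours avoids the indecomposability assertions (which the paper uses without detailed proof) and exhibits the isomorphism explicitly, but at the cost of a nontrivial external input --- local constancy of the orientation sheaf of a homology manifold (Bredon) is a real theorem, and it is the only thing saving this step, since a constructible sheaf with all stalks $\kk$ need not be locally constant (think of a skyscraper at a vertex plus an extension by zero on an open edge); the paper's argument is shorter and self-contained in the framework it has already built. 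Two small points to make explicit in your write-up: your reduction to $a=\bc(\sigma)$ with $\sigma$ maximal tacitly uses that all maximal cells have dimension $d$, which holds because $R$ is Cohen--Macaulay (hence equidimensional), and the squarefreeness step should be phrased via Lemma~\ref{sec:maps} applied to $\omega_R$ (multiplication $[\omega_R]_{\bc(\sigma)}\to[\omega_R]_{\bc(\sigma)+a}$ is bijective when the support is unchanged), exactly as in your sketch.
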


\begin{proof}
(a) Since $\dim R = d+1$, 
$R$ is Cohen-Macaulay if and only if $H^i(\cpx I_R) \, 
(= \Ext_R^i(R, \cpx D_R) )= 0$ for all $i \ne -d-1$.   
By Theorem~\ref{Verdier}, the above conditions are also equivalent to that 
$\cH^i(\cDx)=0$ for all $i \ne -d$ and $[H^i(\cpx I_R)]_0 = 0$ for all $i \ne -d-1$.  
Since  $[H^i(\cpx I_R)]_0 \cong ([H_\m^{-i}(R)]_0)^* \cong \rH^{-i-1}(X;\kk)^*$, we are done.  

(b)  We show the first equivalence. 
If $R$ is Gorenstein*, then $or_X \cong (\omega_R)^+ \cong R^+ \cong \const_X$. 
So we get the necessity. Next assume that $or_X \, (= (\omega_R)^+) \, \cong \const_X$. 
Then we have that 
\begin{equation}\label{canonical module}
[\omega_R]_a = \kk \qquad \text{for all $0 \ne a \in \sM$.}
\end{equation}
On the other hand, by Proposition~\ref{Poincare}, we have $[\omega_R]^\vee_0 \cong  
[H_\m^{d+1}(R)]_0 \cong H^d(X;\kk) \cong H^0(X, or_X) \cong H^0(X; \kk) \cong \kk$ 
(since $R$ is Cohen-Macaulay and $d \geq 1$, $\rH^0(X;\kk)=0$ and $X$ is connected). 
Take a non-zero element $x \in [\omega_R]_0$. 
Since $\omega_R$ is a squarefree  $R$-module, 
$M:=Rx$ is a squarefree submodule of $\omega_R$. 
Set $$\Upsilon := \{ \, \supp(a) \mid a \in \sM, M_a = [\omega_R]_a \, \} = 
\{ \, \supp(a) \mid a \in \sM, M_a \ne 0 \, \} \subset \cell.$$ 
Here the second equality follows from the condition \eqref{canonical module}.  
It is easy to see that $\s \leq \t \in \Upsilon$ implies $\s \in \Upsilon$. 
So we have a direct sum decomposition 
$\omega_R = M \oplus (\bigoplus_{\supp(a) \in \sM \setminus \Upsilon} [\omega_R]_a)$ as an $R$-module.  
On the other hand, $\omega_R$ is indecomposable. Hence $\omega_R =M \cong R$ as 
$\ZZ \MM$-graded modules.  So we get the sufficiency. 
 
For the second equivalence, it is enough to prove the sufficiency. 
Since $[\omega_R]_0 \cong H^d(X;\kk) \ne 0$, we can take $0 \ne x \in [\omega_R]_0$. 
By argument similar to the above, $(Rx)^+$ is a direct summand of $or_X$. 
Note that $X$ is connected and $\const_X$ is indecomposable. 
Since  $\const_X \cong {\mathcal Ext}^{-d}(or_X, \cDx)$,  
$or_X$ is also indecomposable. Hence $or_X \cong (Rx)^+ \cong \const_X$. We are done.
 \end{proof}

\begin{cor}
The Cohen-Macaulay property and Gorenstein* property of $R$ 
are topological properties of $X$ (while it may depend on $\chara (\kk)$). 
\end{cor}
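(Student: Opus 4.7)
The plan is to observe that the whole corollary is essentially a restatement of Theorem~\ref{CM & Gor}, once one recognises that every invariant appearing on the right-hand side of the characterisations there is purely topological (with $\chara(\kk)$ as the only non-topological input). So the proof I would write is very short: invoke Theorem~\ref{CM & Gor} and check invariance of each ingredient under homeomorphism.

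First I would dispose of the Cohen--Macaulay case. By Theorem~\ref{CM & Gor}(a), $R$ is Cohen--Macaulay if and only if $\cH^i(\cDx)=0$ for all $i\ne -d$ and $\rH^i(X;\kk)=0$ for all $i\ne d$. Verdier's dualizing complex $\cDx\in\Db(\Sh(X))$ is constructed from the topological space $X$ and the coefficient field $\kk$ alone (see \cite{Iver}), so its cohomology sheaves $\cH^i(\cDx)$ are topological invariants of the pair $(X,\kk)$; likewise for $\rH^i(X;\kk)$. Hence the condition on the right depends only on the homeomorphism type of $X$ and on $\chara(\kk)$, which gives the claim for the Cohen--Macaulay property.

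For the Gorenstein* case I would quote the second equivalence of Theorem~\ref{CM & Gor}(b): assuming $d\ge 1$ and $R$ Cohen--Macaulay, $R$ is Gorenstein* if and only if $(or_X)_p\cong\kk$ for every $p\in X$ and $H^d(X;\kk)\ne 0$. Since $or_X=\cH^{-d}(\cDx)$ and $H^d(X;\kk)$ are again invariants of $(X,\kk)$, and since we have just shown that Cohen--Macaulayness itself is topological, the Gorenstein* property is topological too. The degenerate case $d=0$ has to be handled separately, but there $R$ is automatically Cohen--Macaulay of dimension one and $X$ is a finite discrete set of points, so the statement is trivial.

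There is no real obstacle: the work has already been done in Theorems~\ref{Buchsbaum} and \ref{CM & Gor}, where the ring-theoretic properties were translated into statements about $\cDx$, $or_X$ and the (reduced) singular cohomology of $X$. The only point that deserves a line of emphasis is that $\cDx$ is manufactured from $X$ and $\kk$ alone, so one does not have to worry that the conical or monoidal complex structure could smuggle in extra data beyond the homeomorphism type of $X$ and the characteristic of $\kk$.
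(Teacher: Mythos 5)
Your proposal is correct and follows essentially the same route as the paper: the bulk of the corollary is read off directly from Theorem~\ref{CM & Gor} (with the observation that $\cDx$, $or_X$ and $\rH^\bullet(X;\kk)$ depend only on $X$ and $\kk$), plus a separate remark for the low-dimensional degenerate case, where the paper simply records that Gorenstein* holds iff $X$ is exactly two points. Your one-line dismissal of that degenerate case as ``trivial'' is at the same level of detail as the paper's own treatment, so there is no substantive gap.
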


\begin{proof}
Most of the statement is a direct consequence of Theorems~\ref{CM & Gor}.  
It remains to consider the Gorenstein* property in the case $\dim R=0$.  
Then $R$ is Gorenstein* if and only if $X$ consists of exactly two points. 
So the assertion is clear. 
\end{proof}

\begin{rem}
The main result of Caijun \cite{Cj} is much more general than our  
Theorems~\ref{CM & Gor} (a). However, since he worked in a wider context, 
his argument does not give precise information of local cohomologies and canonical modules.  

Recall that $\MM$ admits a finite subset $\mbra{a_e}_{e \in E}$  
of $\sM$ generating $\fring \MM$ as a $\kk$-algebra. 
Then the polynomial ring $S := \fring{X_e \mid e\in E}$ surjects on $\fring \MM$. 
Let $I_\MM$ be its kernel (i.e., $\kk[\MM] = S/I_\MM$).  
A remarkable result \cite[Theorem~3.8]{BKR} of Bruns et al. 
shows that (if $\MM$ is cone-wise normal) 
there is a generating set $\mbra{a_e}_{e \in E}$ and a term order $\succ$ on $S$ such that 
the initial ideal $\init(I_\MM)$ is a radical monomial ideal. 
In this case, $\init(I_\MM)$ equals to the Stanley-Reisner ring $I_\Delta$ 
of a simplicial complex $\Delta$ which gives a triangulation of $X$.  
Hence, by a basic fact on Gr\"obner bases, 
the sufficiency of Theorems~\ref{Buchsbaum} and \ref{CM & Gor} (b) follow from 
their result, at least under the additional assumption that $R$ 
admits an $\NN$-grading with $R_0 = \kk$. 
\end{rem}

%
%
%
%
\end{document}